\theoremstyle{plain}
\newtheorem{theorem}{Theorem}[section]
\newtheorem{lem}[theorem]{Lemma}
\newtheorem{prop}[theorem]{Proposition}
\newtheorem{cor}[theorem]{Corollary}
\newtheorem{defi}[theorem]{Definition}
\newtheorem{remark}[theorem]{Remark}
\newtheorem{exa}[theorem]{Example}
 \newtheorem*{rem} {Remark} 
\def\Ad{{\rm Ad}}
\def\grad{{\rm grad}}
\def\conv{{\rm conv}}
\def\a{{\mathfrak{a}}}
\def\b{{\mathfrak{b}}}
\def\s{{\mathfrak{s}}}
\def\m{{\mathfrak{m}}}
\def\n{{\mathfrak{n}}}
\def\p{{\mathfrak{p}}}    
\def\k{{\mathfrak{k}}}
\def\g{{\mathfrak{g}}}
\def\l{{\mathfrak{l}}}
\def\h{{\mathfrak{h}}}
\def\B{{\mathbb{B}}}
\def\C{{\mathbb{C}}}
\def\R{{\mathbb{R}}}
\def\HH{{\mathbb{H}}}
\def\N{{\mathbb{N}}}
\def\nsmallskip{\smallskip\noindent}
\def\bbigskip{\bigskip\bigskip}
\def\nmedskip{\medskip\noindent}
\def\buildunder#1#2{\mathrel{\mathop{\kern0pt #2}
\limits_{#1}}}
\def\dds{\frac{d}{ds}{\big |_{s=0}}}
\def\ddt{\frac{d}{dt}{\big |_{t=0}}}
\def\ddx{{\frac{d}{dx}}}
\def\ddy{{\frac{d}{dy}}}
\def\pn{\par\noindent}
\def\sn{\smallskip\noindent}
\def\mn{\medskip\noindent}
\def\bn{\bigskip\noindent}
\def\VD{{\scriptstyle V_D}}
\def\REM #1{}
\begin{document}

\title[Unipotent geometry]{Geometry  of   Hermitian symmetric spaces under the action of
a maximal unipotent group}

\bbigskip

\author[Laura Geatti]{Laura Geatti}
\author[Andrea Iannuzzi]{Andrea Iannuzzi}

\address{Dipartimento di Matematica,
Universit\`a di Roma  ``Tor Vergata", Via della Ricerca Scientifica 1,
I-00133 Roma, Italy} 
\email{geatti@mat.uniroma2.it, iannuzzi@mat.uniroma2.it}

\address{Roma, 26 ottobre 2022}

\thanks {\ \ {\it Mathematics Subject Classification (2010):} 32M15, 31C10, 32T05}

\thanks {\ \ {\it Key words}: Hermitian symmetric spaces, Stein domains, plurisubharmonic functions}

\thanks {\ \  The authors acknowledge the MIUR Excellence Department Project awarded to the Department of Mathematics, University of Rome ``Tor Vergata", CUP
E83C18000100006.
This research was partially supported by GNSAGA-INDAM.
}

\begin{abstract}
Let $\,G/K\,$ be a non-compact irreducible Hermitian symmetric space of rank $\,r\,$ and let  $\,NAK\,$ be an Iwasawa decomposition  of $\,G$.
By the polydisc theorem,  $\,AK/K\,$ can be regarded as the base
of an $\,r$-dimensional  tube domain  holomorphically embedded in $\,G/K$. As 
every $\,N$-orbit in $\,G/K\,$ intersects $\,AK/K$ in a single point,  there is a one-to-one correspondence  between $\,N$-invariant domains in $\,G/K\,$ and tube domains
in the product of $\,r\,$ copies of the upper half-plane in $\,\C$. 
In this setting we prove a generalization of Bochner's tube theorem. Namely,
an $\,N$-invariant  domain $\,D\,$ in $\,G/K\,$ is Stein if and only if the base $\,\Omega\,$ of the associated tube domain is convex and
``cone invariant". We also obtain a precise description of the envelope of holomorphy of an arbitrary holomorphically separable $\,N$-invariant  domain over $\,G/K$.

An important ingredient for the above results is the characterization of several classes of $\,N$-invariant plurisubharmonic funtions on $\,D\,$ in terms of the corresponding classes of convex functions on $\,\Omega$. This
also leads to an explicit 
 Lie group theoretical description of all $\,N$-invariant potentials of the  Killing  metric on $\,G/K$.
\end{abstract}

\maketitle


\section{Introduction}
\label{Introduction}

The classical Bochner's tube theorem states that the envelope of holomorphy of a tube domain $\,\R^n +  i\Omega\,$ in $\,\C^n\,$ is univalent and coincides  with the convex envelope $\,\R^n+i\,\conv(\Omega)$. 
Moreover,  there is a one-to-one correspondence between the class of $\,\R^n$-invariant plurisubharmonic functions on a  Stein tube domain in    $\,\C^n\,$   and the class of convex functions on its base   in $\,\R^n$ (cf.\,\cite{Gun90}).

Here our goal is to  obtain analogous  results    in the setting of an irreducible  Hermitian symmetric space  of the  non-compact type,  under  the action of  a maximal unipotent group of holomorphic automorphisms. 

Any such space can be realized as a quotient   $G/K$,  where  $G$ is a non-compact real simple Lie group and $K$ is a maximal compact subgroup of $G$. 
Let $\,\g=\n\oplus \a \oplus \k\,$ be an Iwasawa decomposition of $\,\g$, where $\,\n\,$ is a maximal nilpotent subalgebra, $\,\a\,$ is a maximally split abelian subalgebra and $\,\k\,$ is the Lie algebra of $\,K$. 
The integer $\,r:=\dim\a\,$ is by definition the rank of~$\,G/K$.

Let $\,NAK\,$ be the corresponding  Iwasawa decomposition of $\,G\,$,  where  $\,A:=\exp\a\,$   and $\,N:=\exp \n$.   
The group $\,N\,$ acts on $\,G/K\,$  by biholomorphisms  and   every $\,N$-orbit  in $\,G/K\,$ intersects the smooth, real
$\,r$-dimensional  submanifold  $\,A\cdot eK\,$ transversally in a single point.

 As the space $\,G/K\,$ is  {\it Hermitian} symmetric, $\,G\,$ contains $\,r\,$  pairwise commuting subgroups isomorphic to  $\,SL(2,\R)$. 
  The  orbit of the base point $\,eK\in G/K\,$ under the product of such subgroups is  a  closed complex submanifold of $\,G/K$
 which contains $\,A\cdot eK\,$ and is  biholomorphic to $\,\HH^r$, the product  of $\,r$ copies of the upper half-plane in $\, \C$.    Moreover,
   every $N$-orbit in $G/K$ intersects $\,\HH^r\,$ in an $\,\R^r$-orbit. 
 
 This fact   is an analogue of the polydisk theorem  and determines a one-to-one correspondence between  $N$-invariant domains   in $G/K$ and tube domains  in $\,\HH^r\,$ (cf. Prop.\,\ref{FACT1} and Cor.\,\ref{ASSOCIATEDTUBE}).  If $\,D\,$ is an $\,N$-invariant domain in $\,G/K$, then it is in terms of the base  $\,\Omega\,$  of  the associated tube domain in $\,\HH^r\,$ that the properties of   $\,N$-invariant objects on $\,D\,$ can  be best described.
 
 Define the cone 
 $$   \textstyle C:=\begin{cases} (\R^{>0})^r,  \hbox{ in the non-tube case,}\\ (\R^{>0})^{r-1}\times  \{0\}, \hbox{ in the  tube case.}\end{cases} $$ 
A set $\,\Omega \subset \R^r\,$ is  $\,C$-invariant if $\,{\bf y}\in\Omega\,$ implies $\,{\bf y}+{\bf v}\in\Omega$, for all~$\,{\bf v}\in C$. Our generalizion of Bochner's  tube thorem is as follows

\nmedskip
{\bf Theorem \ref{CASOLISCIO1}.}
{\sl Let $\,G/K\,$ be a non-compact irreducible Hermitian symmetric space of rank $r$. Let $\,D\,$ be an $\,N$-invariant domain  in $\,G/K\,$
  and let $\,\R^r+i\Omega\,$ be the associated $\,r$-dimensional tube domain.
Then  $\,D\,$   is Stein if and only if 
 $\,\Omega\,$ is convex and $\,C$-invariant.}

\medskip
We also show that a 
 holomorphically separable, $\,N$-equivariant,  Riemann domain  over $\,G/K\,$ is necessarily  univalent
(cf.\,Prop.$\,$\ref{SCHLICHT}).  This  implies  the following corollary.  

 \nmedskip
{\bf Corollary \ref{ENVELOPE}.} {\sl The envelope of holomorphy $\,\widehat D\,$ of an
$\,N$-invariant domain $\,D\,$ in $\,G/K\,$ is the smallest Stein domain in $\,G/K\,$ containing $\,D$. The base 
$\,\widehat \Omega\,$ of the $\,r$-dimensional   tube domain associated to  $\,\widehat D\,$  is  the convex, $\,C$-invariant hull of~$\,\Omega$.}

\medskip
One  approach to the proof of the above theorem uses smooth $\,N$-invariant functions. There is a one-to-one correspondence between  $\,N$-invariant functions on $D$  and  functions on $\,\Omega$,   and such correspondence preserves regularity.   
An important ingredient  is   the  computation of the Levi form of a smooth  $\,N$-invariant function  $\,f\colon D\to \R\,$  in terms of the Hessian and the gradient of the corresponding function $\,\widehat f\colon \Omega\to \R$. To this end, a simple pluripotential argument enables us to 
exploit the restricted root decomposition of $\,\n$ (cf.\,Prop.\,\ref{LEVI} and Prop.\,\ref{PSHPOLIDISC2}).

Then, in the smooth case, the proof  of Theorem\,\ref{CASOLISCIO1} is carried out by showing that $\,D\,$ is Levi pseudoconvex, and therefore Stein,  
 if and only if the base $\,\Omega\,$ of the associated tube domain is convex and $C$-invariant.

The general case follows  from the smooth case   by exhausting  $\,D\,$ with  an increasing sequence  of Stein, $\,N$-invariant domains with smooth boundary. For this we adapt a classical approximation method for convex functions on convex domains to our $\,C$-invariant  context.

In Section 6, an alternative  proof  of   Theorem\,\ref{CASOLISCIO1}    is carried out by realizing $\,G/K\,$ as a Siegel domain and   by combining some results from the theory of normal $\,J$-algebras   with  some convexity arguments. 
 
 \smallskip
The aformentioned computation of the Levi form leads to a characterization of smooth $N$-invariant plurisubharmonic functions on $\,N$-invariant domains in $G/K$  in terms of the corresponding functions on $\,\Omega$.   
By classical approximation methods, a similar characterization  is obtained for   arbitrary $\,N$-invariant  (strictly) plurisubharmonic functions on $\,D$. 
 In order to formulate such results we need the following definition. 

Let  $\,\widehat f\colon \Omega\to \R\,$ be a function defined on 
 a $\,C$-invariant domain in $\,(\R^{>0})^r\,$
 and  let $\,\overline C\,$ be the closure of the cone $\,C$.  Then $\,\widehat f\,$ is $\,\overline C$-decreasing if for every $\,{\bf y} \in \Omega\,$ and
$\,{\bf v} \in \overline C
$ the restriction of $\,\widehat f\,$ to the half-line $\,\{ {\bf y} + t {\bf v } \ : \ t \geq 0 \}\,$ is decreasing.

\bn
{\bf Theorem.} (see Thm.\,\ref{BIJECTIVEK})
{\sl Let $\,D\,$ be a Stein, $\,N$-invariant domain in a non-compact, irreducible 
Hermitian symmetric space $\,G/K\,$ of rank $\,r\,$ and let $\, \Omega\,$ be the base of the associated $\,r$-dimensional tube domain.

\smallskip
 An $\,N$-invariant  function   $\,f\colon D\to \R\,$ is (strictly)  plurisubharmonic  if and only if the corresponding function 
 $\,\widehat f\colon \Omega\to\R \,$ is (stably) convex and   $\,\overline C$-decreasing.
 
 \sn
It follows  that  every $\,N$-invariant plurisubharmonic function  on $\,D\,$ is continuous.}

\medskip
In fact, the above theorem holds true both in the smooth and non-smooth context, and  can be regarded as a generalization of  the well known
result for $\,\R^n$-invariant plurisubharmonic functions on tube domains in $\,\C^n$  
(see Sect.\,5 for precise definitions and statements). 

In the appendix, as an   application of   our methods  we  explicitly  determine all the $\,N$-invariant potentials
of the Killing metric on $\,G/K\,$ in a Lie group theoretical fashion.


\section{Preliminaries}
\label{PRELIMINARIES}

\medskip

Let $\,G/K\,$ be an irreducible Hermitian symmetric space, where $\,G\,$ is a real non-compact semisimple Lie group and $\,K\,$ is a maximal compact subgroup of $\,G$.  Let $\,\g\,$   and   $\,\k\,$ be the respective Lie lagebras.  Let $\,\g=\k\,\oplus\,\p$ be the Cartan decomposition of $\,\g\,$ with respect to $\,\k$, with  Cartan involution $\,\theta$. Denote by $\,B(\,\cdot\,,\,\cdot\,)\,$ both the Killing form of $\,\g\,$ and  its $\,\C$-linear extension to $\,\g^\C\,$ (which coincides with the Killing form of $\,\g^\C$). 

Let  $\,\a\,$ be a maximal abelian subspace in $\,\p$.  The dimension  of $\,\a\,$ is by definition the {\it rank} $r$ of
$\,G/K$.  Let 
 $\g= \m\oplus \a\oplus\bigoplus_{\alpha\in\Sigma}\g^\alpha $  be the restricted root decomposition of $\g$ determined by the adjoint action of $\a$, 
where $\,\m\,$ denotes the centralizer of $\,\a\,$ in $\,\k$.
For a simple Lie algebra   of Hermitian type  $\,\g\,$,  the restricted root system is either of type 
$\,C_r\,$ (if $\,G/K\,$ is of tube type) or of type $\,BC_r\,$ (if $\,G/K\,$ is not of tube type), i.e.  there exists  a basis
$\,\{e_1,\ldots,e_r\}\,$ of $\,\a^*\,$  for which  a positive system $\Sigma^+$ is given by 
 $$\Sigma^+=\{2e_j, ~1\le j\le r,~~e_k\pm e_l,~ 1\le k< l\le r\},\quad \hbox{ for type $\,C_r$}, $$
 $$\Sigma^+=\{e_j,~2e_j,~1\le j\le r,~~e_k\pm e_l,~~1\le k<l\le r\},\quad \hbox{ for type $\,BC_r$}\,.$$
The roots
 $ 2e_1,\,\dots\,,2e_r \,$ 
form a maximal set of  long strongly orthogonal positive restricted  roots.
The root spaces   $\, \g^{2e_1},\ldots,\g^{2e_r}\,$ are one-dimensional and one can choose generators $\,\ E^j \in \g^{2e_j}\,$ such  that 
 the $\,\s \l (2)$-triples
$\,\{E^j,~\theta  E^j,~ A_j:=[\theta  E^j,\, E^j]\}\,$
are normalized as follows  
\begin{equation}\label{NORMALIZ1}
\textstyle [ A_j,\, E^l]=\delta_{jl}2  E^l, \quad \hbox{for}\quad j,l=1,\ldots,r. \end{equation}
Denote by $\,I_0\,$ the $\,G$-invariant  complex structure  of $\,G/K$.
 We   assume that~$I_0(E^j-\theta E^j) = A_j$.
By the strong orthogonality of $ 2e_1,\ldots, 2e_r$,  the  vectors $\, A_1,\ldots, A_r \,$ form  a $\,B$-orthogonal basis of $\,\a\,$, dual to  $e_1,\ldots,e_r$ of $\a^*$,  and 
the  associated $\,\s \l (2)$-triples pairwise commute.  

Let $\g=\n\oplus \a\oplus\k$ be the  Iwasawa decomposition subordinated to $\Sigma^+$, where  $\n=\oplus_{\alpha\in\Sigma^+}\g^\alpha$,  and let $G=NAK$ be the corresponding Iwasawa decomposition of $G$. 
Then $S=NA$ is a real split solvable group  acting freely and  transitively on $G/K$. 
 In particular, the tangent space to $G/K$ at the base point $eK$ can be identified with the Lie algebra $\s=\n\oplus\a$.

The map $\phi\colon \s\to \p$, given  by 
 $\phi(X):={1\over 2}(X-\theta X),$  is an isomorphism of vector  spaces.  
As a consequence, 
\begin{equation}\label{INNERPRODUCT}\textstyle  \langle X\,,Y \rangle :=B(\phi(X)\,,\phi(Y))=-{1\over 2} B(X\,,\theta Y),  \end{equation}
for $X,\,Y\in\s$, defines a positive definite symmetric bilinear form on $\s$.
Moreover, the map $J\colon \s\to\s$, given by
\begin{equation}\label{COMPLEXJ} 
\textstyle JX:=\phi^{-1}\circ I_0\circ \phi(X),\end{equation}
defines a complex structure on $\s$, such that $\phi(JX)=I_0\phi(X)$.
The complex structure $\,J\,$ permutes the restricted root spaces 
of~$\,\s$ (cf. \cite{RoVe73}), namely 
\begin{equation}
\label{CPLXBIS}
\,J\a=\bigoplus_{j=1}^r \g^{2e_j}, \quad J\g^{e_j-e_l}=\g^{e_j+e_l},  \quad J\g^{e_j}=\g^{e_j}\,.
 \end{equation}

In order to  obtain  a precise description of~$J$ on~$\s$, we  recall a few more  facts.  Let $ \g^\C=\h^\C\oplus \bigoplus_{\mu\in\Delta} \g^\mu$ be the root decomposition of $\g^\C$ with respect
to  a maximally split Cartan subalgebra $\h=\b\oplus \a$ of $\g$, where $\b$ is an abelian subalgebra of~$\m$. Let $\sigma$ be the conjugation
of $\g^\C$ with respect to~$\g$. Let $\theta$ denote also the $\C$-linear extension of 
$\theta$ to $\g^\C$. One has $\theta\sigma=\sigma\theta$. Write $\overline Z:=\sigma Z$, for $Z\in\g^\C$. As  $\sigma$ and $\theta$ stabilize  $\h$, they induce   actions on $\Delta$, defined by
$\bar\mu (H):=\overline{\mu(H)}$ and $\theta\mu(H):=\mu(\theta(H))$, for $H\in\h$, respectively.  Fix a positive root system  $\Delta^+$ compatible with 
$\Sigma^+$, meaning that $\mu|_\a =Re(\mu) \in \Sigma^+$ implies $\mu\in\Delta^+$. Then~$\sigma\Delta^+= \Delta^+$.

Given a restricted root $\alpha\in\Sigma $, the corresponding  restricted root space $\g^\alpha$ decomposes into the direct sum of ordinary root spaces with respect to the  Cartan subalgebra $\h$ as follows
$$\g^\alpha=\Big( \bigoplus_{\mu\in \Delta,\, \mu\not=\bar\mu\atop Re(\mu)=\alpha} \g^\mu\oplus \g^{\bar\mu}\quad \oplus \g^\lambda
\Big) \cap \g,$$
where  $\lambda \in \Delta$ is possibly a  root  satisfying $\lambda=\bar\lambda=\alpha$. 
The next lemma is obtained by combining Lemma 2.2 in {\rm \cite{GeIa21}} with (\ref{COMPLEXJ}).  
 
\begin{lem} \label{COMPLEXSTRUCTURE} {\bf (the complex structure $J$ on $\s$)}.
\item{$(a)$}  For $j=1,\ldots,r$, let $A_j\in\a$ and $E^j\in\g^{2e_j}$ be elements normalized as in $($\ref{NORMALIZ1}$)$. 
Then $JE^j ={1\over 2}A_j$ and $JA_j=-2 E^j$.  

\sn
\item{$(b)$}  Let   $X=Z^\mu+\overline{Z^\mu}\in \g^{e_j-e_l}$, 
 where $\mu\in\Delta^+$ is a root satisfying 
  $Re(\mu)=e_j-e_l$ and    $Z^\mu\in\g^\mu$ (if $\bar\mu=\mu$, we may assume $Z^\mu=\overline{Z^\mu}$
  and set $X=Z^\mu$).
Then $JX = [ E^l,X]\in \g^{e_j+e_l}$. \\
Let   $X=Z^\mu+\overline{Z^\mu}\in \g^{e_j+e_l}$, 
 where $\mu\in\Delta^+$ is a root satisfying 
  $Re(\mu)=e_j+e_l$ and    $Z^\mu\in\g^\mu$ (if $\bar\mu=\mu$, we may assume $Z^\mu=\overline{Z^\mu}$
  and set $X=Z^\mu$).
Then $JX = [ \theta E^l,X]\in \g^{e_j-e_l}$. 

\sn
\item{$(c)$} Let $ X  =Z^\mu+\overline{Z^\mu}\in\g^{e_j}$, where  $\mu$ is a root in
$\Delta^+$ satisfying  $Re(\mu)=e_j$ and   $Z^\mu\in\g^\mu$ (as $\dim\g^{e_j}$ is even, one necessarily has 
$\bar\mu\not= \mu)$. Then $JX =iZ^\mu + \overline{iZ^\mu}\in\g^{e_j}$.
\end{lem}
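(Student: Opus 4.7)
My strategy is to reduce every statement to a computation of $I_0$ on $\,\p$ via the isomorphism $\,\phi\colon \s\to\p$, using the defining relation $\,JX=\phi^{-1}(I_0\phi(X))\,$ from (\ref{COMPLEXJ}). The only input specific to our choice of $\,I_0\,$ is the normalization $\,I_0(E^j-\theta E^j)=A_j$; everything else is bookkeeping with the restricted root decomposition and the $\,\sl(2)$-triples (\ref{NORMALIZ1}).

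For part $(a)$, I would first note that since $\,A_j\in\a\subset\p\,$ one has $\,\theta A_j=-A_j\,$ and hence $\,\phi(A_j)=A_j$, while $\,\phi(E^j)=\tfrac12(E^j-\theta E^j)$. Applying $\,I_0\,$ and then $\,\phi^{-1}\,$ gives
\[
JE^j=\phi^{-1}\bigl(\tfrac12 I_0(E^j-\theta E^j)\bigr)=\phi^{-1}(\tfrac12 A_j)=\tfrac12 A_j.
\]
For the second formula, since $\,I_0^2=-\mathrm{id}\,$ on $\,\p$,
\[
I_0 A_j=I_0^2(E^j-\theta E^j)=-(E^j-\theta E^j),
\]
and the unique preimage under $\,\phi\,$ of $\,-(E^j-\theta E^j)\,$ lying in $\,\s=\n\oplus\a\,$ is $\,-2E^j$, because $\,\phi(-2E^j)=-(E^j-\theta E^j)$. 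Hence $\,JA_j=-2E^j$. This also verifies $\,J^2=-\mathrm{id}\,$ on the plane $\,\R A_j\oplus \R E^j$, which is a useful consistency check.

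For parts $(b)$ and $(c)$, the content is the action of $\,I_0\,$ on the $\,\phi$-image of the root spaces $\,\g^{e_j\pm e_l}\,$ and $\,\g^{e_j}$. This is precisely the statement of Lemma 2.2 in \cite{GeIa21}, which is obtained by decomposing each restricted root space into $\,\h^\C$-root spaces and using the compatibility of $\,\Delta^+\,$ with $\,\Sigma^+\,$ together with the fact that $\,\theta E^l=-\overline{E^l}\,$ relates the real form and the Cartan involution on $\,\g^\C$. Once that lemma is in hand, translating by $\,\phi^{-1}\,$ produces the brackets $\,[E^l,X]\,$ and $\,[\theta E^l,X]\,$ appearing in $(b)$: the key point is that, on the $\,\p$-side, $\,I_0\,$ is implemented by the adjoint action of the $\,\sl(2)$-triple, and $\,\phi\,$ intertwines this with the same bracket on the $\,\s$-side, up to a constant fixed by the normalization in $(a)$. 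For part $(c)$, since $\,\dim\g^{e_j}\,$ is even and $\,\bar\mu\neq \mu$, the complex root vector $\,Z^\mu\,$ is genuinely complex and Lemma 2.2 of \cite{GeIa21} identifies $\,I_0\,$ with multiplication by $\,i\,$ on $\,Z^\mu$; taking the real part yields the stated formula, and reality of $\,iZ^\mu+\overline{iZ^\mu}\,$ is automatic.

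The main obstacle, and where care is needed, is the bookkeeping in $(b)$–$(c)$: one has to verify that the proposed images $\,[E^l,X]$, $\,[\theta E^l,X]$, $\,iZ^\mu+\overline{iZ^\mu}\,$ are $\,\sigma$-invariant (hence live in $\,\g$) and land in the claimed restricted root space. Both facts follow from $\,\sigma$-invariance of $\,\Delta^+\,$ and the bracket relations $\,[\g^\mu,\g^\nu]\subset \g^{\mu+\nu}$, but keeping track of the pairing $\,\mu\leftrightarrow\bar\mu\,$ and the cases $\,\bar\mu=\mu\,$ versus $\,\bar\mu\neq\mu\,$ is the only non-routine part. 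Once Lemma 2.2 of \cite{GeIa21} is invoked, the verification reduces to a direct application of $\,\phi^{-1}\,$ on each root space, which is straightforward.
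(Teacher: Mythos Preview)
Your approach is correct and matches the paper's own treatment, which simply records that the lemma follows by combining Lemma~2.2 of \cite{GeIa21} with the defining relation (\ref{COMPLEXJ}); your explicit computation for part $(a)$ and your reduction of $(b)$--$(c)$ to that reference are exactly this. One minor slip: since $E^l\in\g$ is real one has $\overline{E^l}=E^l$, so the aside ``$\theta E^l=-\overline{E^l}$'' is not correct as stated, but it plays no role in your argument.
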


\begin{remark} 
\label{BASIS2} {\bf (a $J$-stable basis of $\s$)} In view of Lemma \ref{COMPLEXSTRUCTURE}, one can choose a $J$-stable basis of $\s$, compatible with the restricted root decomposition. 

\sn
$ (a)$ As a  basis of   $\a\oplus J\a$,  take pairs of   elements 
$A_j$, $ JA_j =-2E^j$, for $j=1,\ldots,r$, normalized as in $($\ref{NORMALIZ1}$)$. 

\sn
$ (b)$ As a basis of  $\g^{e_j-e_l}\oplus\,\g^{e_j+e_l}$,   
take 4-tuples of elements  
\begin{equation}
\label{BASEDET} X=Z^\mu+\overline{Z^\mu},\quad X'=iZ^\mu+\overline{iZ^\mu},\quad
JX=[E^l,X],\quad JX'= [E^l,X'], \end{equation}
parametrized by the pairs of roots $\mu\not=\bar\mu \in\Delta^+$ satisfying   $Re(\mu)=e_j-e_l $ $($with no repetition$)$, 
with  $Z^\mu $  a root vector in $\g^\mu$. 
For $\mu= \bar \mu$, one may assume $Z^\mu=\overline{Z^\mu}$ and take the pair $X=Z^\mu,\,  JX=[E^l,X]$.  \pn

\sn
$ (c)$ As a  basis of $\g^{e_j}$ $($non-tube case$)$,  take pairs of elements $$X=Z^\mu+\overline{Z^\mu}, \quad JX=  iZ^\mu+\overline{iZ^\mu},$$ parametrized by the pairs of roots $\mu\not=\bar\mu \in\Delta^+$ satisfying  $Re(\mu)=e_j $ $($with no repetition$)$, 
 with  $Z^\mu\in \g^\mu$.
\end{remark}

\smallskip
The next lemma contains some identities which are needed in Section 3. Its proof is essentially contained in 
\cite{GeIa21}, Lemma\,2.4. 

\begin{lem} 
\label{NBRACKETS}
 Let  $\,\mu\in \Delta^+\,$ be a root satisfying $\,Re(\mu)=e_j-e_l\,$ and  let $\,Z^\mu\,$ a root vector in
 $\,\g^\mu$. 
 Let  $\,X=Z^\mu+\overline Z^\mu  \in \g^{e_j-e_l}\,$ and $\,JX=[E^l,X]\in\g^{e_j+e_l}$.
 If $\, \overline \mu \not= \mu$, let  $\,X'=iZ^\mu+\overline {iZ^\mu}\,$ and $\,JX'=[E^l,X']$. 
 Then
 
\mn
$(a)$ $\,[JX,X] = [JX',X']=s E^j $, for some $\,s  \in \R,\,s\not=0$;

\mn
$(b)$ $[JX',  X]  =0.$ 
 
\mn
Let  $\,\mu\,$ be a root in $\,\Delta^+$, with $\,Re(\mu)=e_j\,$ $($non-tube case$)$ and let  $\,Z^\mu\,$
 be a root vector in $\,\g^\mu$.
Let  $\,X=Z^\mu+\overline Z^\mu \,$ and $\,JX=iZ^\mu+\overline{iZ^\mu}$. Then 

\mn
 $(c)$ $\,[JX,X] =tE^j$, for some $\,t  \in \R,\,t\not=0$. 
 
\end{lem}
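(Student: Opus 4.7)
The plan is to expand $\,X = Z^\mu + \overline{Z^\mu}\,$ and $\,X' = iZ^\mu + \overline{iZ^\mu} = iZ^\mu - i\overline{Z^\mu}\,$ into complex root vectors and to evaluate each of the resulting brackets. Several of them are forced to vanish by the restricted root structure of types $\,C_r\,$ and $\,BC_r$, and the remainder can all be identified, via the Jacobi identity, with a single bracket of the form $\,[[E^l, Z^\mu], \overline{Z^\mu}]\,$ (or $\,[Z^\mu, \overline{Z^\mu}]\,$ for case $(c)$).

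I would begin with $(b)$. The bracket $\,[JX', X]\,$ lies a priori in $\,[\g^{e_j+e_l},\g^{e_j-e_l}]\subset \g^{2e_j} = \R E^j$. Expanding yields
\begin{equation*}
[JX', X] = i[[E^l, Z^\mu], Z^\mu] + i[[E^l, Z^\mu], \overline{Z^\mu}] - i[[E^l, \overline{Z^\mu}], Z^\mu] - i[[E^l, \overline{Z^\mu}], \overline{Z^\mu}].
\end{equation*}
The outer terms live in $\,\g^{\nu + 2\mu}\,$ and $\,\g^{\nu + 2\bar\mu}\,$ respectively, where $\,E^l \in \g^\nu\,$ with $\,\bar\nu = \nu$. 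Their real parts both equal $\,2e_j$, so if nonzero they would have to equal $\,\g^{\lambda_j}$, where $\,\lambda_j\,$ is the unique complex root with $Re(\lambda_j) = 2e_j$ (which is self-conjugate because $\,\g^{2e_j}\,$ is one-dimensional even over $\,\C$). This would force $\,2\mu = 2\bar\mu$, contradicting $\,\mu \neq \bar\mu$; hence both outer terms vanish. By the Jacobi identity applied to $\,E^l, Z^\mu, \overline{Z^\mu}\,$ and the vanishing $\,[Z^\mu, \overline{Z^\mu}] \in \g^{\mu + \bar\mu} = 0\,$ (since $\,Re(\mu + \bar\mu) = 2(e_j - e_l) \notin \Sigma$), the two middle terms coincide, so their opposite-sign contributions cancel and $\,[JX', X] = 0$.

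An identical expansion for $(a)$ yields $\,[JX, X] = [JX', X'] = 2[[E^l, Z^\mu], \overline{Z^\mu}]\,$, which lies in $\,\g^{2e_j} = \R E^j\,$ and hence equals $\,sE^j\,$ for some $\,s \in \R$. The nontriviality $\,s \neq 0\,$ is the crux. I would establish it by the $\,\ad$-invariance of the Killing form,
\begin{equation*}
s\, B(E^j, \theta E^j) \;=\; B([JX, X], \theta E^j) \;=\; B(JX, [X, \theta E^j]),
\end{equation*}
and check that $\,[X, \theta E^j] \in \g^{-e_j - e_l}\,$ is nonzero, through the $\,\s\l(2)$-representation theory of $\,\{E^j, \theta E^j, A_j\}\,$ acting on $\,X\,$ (which has $\,A_j$-weight $\,1\,$ and is annihilated by $\,\ad(E^j)\,$ since $\,3e_j - e_l \notin \Sigma$); the pairing is then nontrivial by nondegeneracy of $\,B\,$ on $\,\g^{e_j+e_l} \times \g^{-e_j-e_l}\,$ together with the identification of $\,[X, \theta E^j]\,$ as a scalar multiple of $\,\theta JX$. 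Case $(c)$ is handled analogously: $\,[JX, X] = 2i[Z^\mu, \overline{Z^\mu}]\,$ lies in $\,\g^{\lambda_j}\,$ and is pure imaginary by $\,\sigma$-conjugation, hence equals $\,tE^j\,$ for some $\,t \in \R$; nonvanishing reduces to $\,[Z^\mu, \overline{Z^\mu}] \neq 0$, which follows from the nondegeneracy of $\,B\,$ on $\,\g^\mu \times \g^{-\mu}\,$ combined with $\,\theta Z^\mu \in \g^{-\mu}$.

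The main obstacle is the nonvanishing claim; the cleanest route is the explicit root-theoretic reduction carried out in \cite{GeIa21}, Lemma 2.4, on which the argument is essentially based.
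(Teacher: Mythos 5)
Your handling of the parts the paper actually argues is essentially the paper's own: the same expansion of $X$, $X'$ into $Z^\mu$, $\overline{Z^\mu}$, the outer terms in $(b)$ killed because the unique root with real part $2e_j$ is self-conjugate (so a root with real part $2e_j$ and nonzero imaginary part cannot exist), the middle terms handled by the Jacobi identity together with $[Z^\mu,\overline{Z^\mu}]=0$, and, in $(a)$ and $(c)$, the observation that the bracket lands in the one-dimensional space $\g^{2e_j}=\R E^j$. Like the paper, you ultimately defer the nonvanishing of $s$ and $t$ to \cite{GeIa21}, Lemma 2.4; note that within this paper the strict positivity (hence nonvanishing) of these constants is recovered later from the Siegel-domain realization (Lemma \ref{PROPERTIES}\,(ii) and Remark \ref{EXPLANATION}), not from a Killing-form pairing at this stage.

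The in-house nonvanishing argument you sketch, however, has genuine gaps if it is meant to stand on its own. In $(a)$, the whole weight is carried by the asserted identification of $[X,\theta E^j]$ with a nonzero multiple of $\theta JX$: this does not follow from weight or $\s\l(2)$ considerations, since $\g^{-e_j-e_l}$ has multiplicity greater than one in general. The identity is in fact true with constant $1$ — it follows, for instance, from $Z_0=S_0+\frac{1}{2}\sum_k(E^k+\theta E^k)$ and $\phi\circ J=\ad (Z_0)\circ\phi$, comparing the $\g^{\pm(e_j+e_l)}$-components of $JX-\theta JX=[Z_0,X-\theta X]$, which yields $[\theta E^j,X]=-\theta JX$ — but that derivation (essentially Lemma 2.2/2.4 of \cite{GeIa21}) is exactly the missing content, so as written this step is an unproven assertion. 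In $(c)$ the justification is incorrect as stated: since $\theta\mu=-\bar\mu$, one has $\theta Z^\mu\in\g^{-\bar\mu}$, not $\g^{-\mu}$, and $[Z^\mu,\overline{Z^\mu}]$ is a bracket of $\g^\mu$ with $\g^{\bar\mu}$, where $\bar\mu\neq-\mu$; hence no Killing-form duality between $\g^\mu$ and $\g^{-\mu}$ applies. What actually has to be shown is that $\mu+\bar\mu$ is a root of $\g^\C$ (equivalently, that the Hermitian form $F$ of Section 6 does not vanish on $\g^{e_j}$), and this is precisely where the Hermitian/Siegel structure, or the explicit computation of \cite{GeIa21}, must enter.
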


\REM{
\begin{proof}   
\pn
(a)  
One has $[JX,X]=[JX',X']= 2Re[[ E^l,Z^\mu],\overline{Z^\mu}]\in   \g^{ 2e_j}.$ 
Since  the root  space $\g^{2e_j}$ is 1-dimensional, then    
 $$[JX,X] =[JX',X'] =s E^j ,\quad \hbox{ for some $s\in\R$}.$$
\sn
(b) One has  
$$[JX',X]=[[E^l,iZ^\mu+\overline{iZ^\mu}],Z^\mu+\overline{Z^\mu}]$$
$$=i [[E^l,Z^\mu],Z^\mu]+i[[E^l,Z^\mu],\overline{Z^\mu}]-i [[E^l,\overline{Z^\mu}],Z^\mu] -i [[E^l,\overline{Z^\mu}],\overline{Z^\mu}].$$
The first and the fourth terms of the above expression are   both zero because otherwise there would exist a root in $\Delta^+$ with real part equal to $2e_j$ and non-zero imaginary part. 
The second and the third term sum up to zero by the Jacobi identity and the fact that $ [Z^\mu,\overline{Z^\mu}]=0$.
\sn
(c)  It follows directly from the fact that  $\,[JX,  X]\in\g^{2e_j}$.
   \end{proof}         }

\bigskip
\section{The Levi form of an $N$-invariant function on $G/K$}

\medskip

Let $\,G/K\,$ be  a non-compact,
 irreducible  Hermitian symmetric space  of rank $\,r$, and let  
 $\,G=N \exp (\a)\, K\,$ be an   Iwasawa decomposition of $\,G$.    
Let $D$ be an  $\,N$-invariant domain   in 
$\,G/K\,$. Then $\,D\,$  is uniquely determined by a domain $\,{\mathcal D}\,$ in~$\,\a\,$ by 
\begin{equation}\label{DA}\, D:=N \exp (\mathcal D) \cdot e K  \,.\end{equation}
Similarly, an $N$-invariant function $f:D \to \R$ is uniquely determined by the   function 
 $\widetilde f:\mathcal D \to \R$, defined by 
 \begin{equation} \label{EFFETILDE}\widetilde f(H): =f(\exp (H)K).\end{equation}

\sn
The goal of this section is to express the {\it Levi form},  i.e.\,the real symmetric $J$-invariant bilinear form
\begin{equation}\label{HESSIAN} h_f(\,\cdot\,,\,\cdot\,):= -dd^c f(\,\cdot\,,J\,\cdot\,),\end{equation} of a smooth $\,N$-invariant function $f$ on $D$, in terms of the   first and second derivatives of the corresponding function $\widetilde f$ on  $\,\mathcal D$. This will enable us to characterize smooth $N$-invariant strictly plurisubharmonic functions on a Stein $N$-invariant domain $D$ in $G/K$ by  appropriate 
conditions on the corresponding functions on $\mathcal D$ (Prop.\,\ref{LEVI}).
As $f$ is 
$N$-invariant,  $h_f$ is  $N$-invariant as well. Therefore it will be sufficient
to carry out the computation along the slice $\exp (\mathcal D) \cdot eK$, which meets all $\,N$-orbits.

For $\,X \in \g$, denote by 
$\,\widetilde X\,$ the vector field on $\,G/K\,$  induced   by the left $\,G$-action. Its value at  $z\in G/K$ is given by    
  \begin{equation}\label{TILDEFIELDS}\,
  \textstyle\widetilde X_z:=\dds \exp sX \cdot z. \end{equation}
 Let  $X  \in \g^\alpha$, for    $\alpha\in \Sigma^+ \cup \{0\}$ (here  $X\in\a$, when $\alpha=0$).  If $z=aK$, with  $\,a=\exp H$  and $\,H \in \a$, then  the  vector field $\widetilde X$ can also be expressed as   
 \begin{equation} \label{TILDE}\widetilde{X }_z =e^{-\alpha(H)} a_*X. 
 \end{equation}
 Set \begin{equation}\label{BI}{\bf b}:=B(A_1,A_1)=\ldots=B(A_r,A_r),\end{equation}
which is a real positive constant only depending on the Lie algebra $\g$.

\bn

\medskip
\begin{prop}\label{LEVI} 
 Let $\,D\,$ be an $\,N$-invariant domain in $G/K$ and let $\,f:D \to \R\,$ be a smooth $\,N$-invariant function. Fix $\,a=\exp H$, with $H=\sum_ja_jA_j\in \mathcal D$.  Then, in   the basis  of $\,\s\,$ defined in  Remark \ref{BASIS2},    the form  $h_f$  at $z=aK\in D$ is given as follows.\ 
\begin{itemize}
\smallskip
\item[(i)]  The spaces $\,a_*\a$, $\,a_*J\a$, $\,a_* \g^{e_j- e_l}\,$, $\,a_* \g^{e_j+ e_l}\,$ 
and $a_* \g^{e_j}\,$ are pairwise $\,h_f$-orthogonal.
  \end{itemize}
\begin{itemize}
\smallskip
\item[(ii)]  For $A_j,A_l\in\a$ 
one has

\noindent
$\,h_f(a_*A_j,a_*A_l)=-2 \delta_{jl} \frac{\partial \widetilde f}{\partial a_l}(H) +\frac{\partial^2 \widetilde f}{\partial a_j\partial a_l}(H)$.
\end{itemize} 

\sn
On  the blocks $a_*\g^{e_j-e_l}$ and $a_*\g^{2e_j}$ the restriction of $h_f$ is diagonal and the only non-zero entries are given as follows.
\begin{itemize}
\smallskip
\item[(iii)]   For 
$X,\, X' \in \g^{e_j- e_l}$ as in Remark \ref{BASIS2}(b), 
one has

\smallskip
\noindent
$$\textstyle \,h_f(a_*X ,a_*X )= -2 \frac{\Vert X\Vert^2}{{\bf b}} \frac{\partial \widetilde f}{\partial a_j}(H),\qquad h_f(a_*X' ,a_*X' )=-2 \frac{\Vert X'\Vert^2}{{\bf b}} \frac{\partial \widetilde f}{\partial a_j}(H).$$

\smallskip
\item[(iv)] $($non-tube case$)$ For $X\in \g^{e_j}$  as in Remark  \ref{BASIS2}(c), one has
\smallskip
\noindent
$$\textstyle \,h_f(a_*X ,a_*X )=    -2 \frac{\Vert X\Vert^2}{{\bf b}}
\frac{\partial \widetilde f}
{  \partial a_j}(H) .$$ 
   \end{itemize}
   On the remaining blocks $h_f$ is determined by (\ref{CPLXBIS}), the $J$-invariance of $h_f$, (i) and (iii) above.
 \end{prop}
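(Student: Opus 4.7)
The plan is to reduce to a slice computation, extend basis elements of $\s$ to induced vector fields on $D$, and exploit the restricted root structure via the Cartan-type identity
$$h_f(V,W) \;=\; V(Wf) + (JW)\bigl((JV)f\bigr) + \bigl(J[V,JW]\bigr)f,$$
obtained from $h_f(V,W)=-dd^cf(V,JW)$ and the Cartan formula $d\alpha(V,W)=V\alpha(W)-W\alpha(V)-\alpha([V,W])$ applied to $\alpha=d^cf=df\circ J$. Since both $f$ and $h_f$ are $N$-invariant, I fix $z=aK$ with $a=\exp H\in\exp(\mathcal{D})$. Each $X\in\s$ is extended to the induced vector field $\widetilde X$ of (\ref{TILDEFIELDS}), and three structural facts are used throughout: $\widetilde Yf\equiv 0$ on $D$ whenever $Y\in\n$ (by $N$-invariance of $f$); the anti-homomorphism $[\widetilde X,\widetilde Y]=-\widetilde{[X,Y]}$; and the pointwise relation $I_0\widetilde X_z=e^{(\beta-\alpha)(H)}\widetilde{JX}_z$ for $X\in\g^\alpha$, $JX\in\g^\beta$ (from the $G$-invariance of $I_0$ and Lemma~\ref{COMPLEXSTRUCTURE}).

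Parts (iii) and (iv) are the most direct. For $V=W=\widetilde X$ with $X\in\g^{e_j-e_l}$ (or $X\in\g^{e_j}$), both $X$ and $JX$ lie in $\n$, so the first term $V(Vf)$ vanishes identically. The surviving terms conspire to isolate the $\a$-component produced by $J$ applied to the bracket $[X,JX]$: by Lemma~\ref{NBRACKETS}(a),(c) this bracket is a multiple of $E^j\in\g^{2e_j}$, and Lemma~\ref{COMPLEXSTRUCTURE}(a) gives $JE^j=\tfrac12 A_j$, so the contribution is a scalar multiple of $\partial\widetilde f/\partial a_j(H)$. The constants $s,t$ of Lemma~\ref{NBRACKETS} are pinned down from (\ref{NORMALIZ1}), (\ref{INNERPRODUCT}), and (\ref{BI}), and yield the coefficient $-2\|X\|^2/{\bf b}$ after the rescaling $\widetilde X_z=e^{-\alpha(H)}a_*X$. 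The off-diagonal vanishing between $X$ and $X'$ inside $\g^{e_j-e_l}$ follows from Lemma~\ref{NBRACKETS}(b), while the orthogonality (i) between distinct blocks is obtained because in such cases $[X,JY]$ avoids $\bigoplus_p\g^{2e_p}$, so $J[X,JY]\in\n$ acts trivially on the $N$-invariant~$f$.

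Part (ii) is the most delicate. With $V=\widetilde{A_j}$ and $W=\widetilde{A_l}$, the first term directly gives $\partial^2\widetilde f/\partial a_j\partial a_l(H)$, since $\widetilde{A_l}f$ is the $N$-invariant function $\partial\widetilde f/\partial a_l\circ\pi$ where $\pi\colon D\to\a$ is the Iwasawa projection. The missing summand $-2\delta_{jl}\,\partial\widetilde f/\partial a_l(H)$ comes from the second term. Although $J\widetilde{A_j}_z=-2e^{2a_j}\widetilde{E^j}_z$ is tangent to the $N$-orbit (so $(J\widetilde{A_j})f$ vanishes along the slice), it does not vanish off the slice: a first-order Iwasawa expansion yields $(J\widetilde{A_j})f(w)=c_j(w)\,e^{-2a_j(w)}\,\partial\widetilde f/\partial a_j(H(w))+O(2)$, where $c_j(w)$ is the $E^j$-coefficient of $\log n(w)$ for $w=n\exp(H(w))K$. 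Differentiating along $J\widetilde{A_l}_z=-2e^{2a_l}\widetilde{E^l}_z$, and using $\widetilde{E^l}(c_j)=\delta_{jl}$, produces exactly $-2\delta_{jl}\,\partial\widetilde f/\partial a_l(H)$. The third term vanishes at $z$ after verifying that the $\a$-valued correction $R_l$ in the decomposition $J\widetilde{A_l}=-2e^{2a_l}\widetilde{E^l}+R_l$ vanishes at the slice to the relevant order.

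The principal obstacle, and the technical heart of the argument, is the off-slice behaviour of $J\widetilde X$: the clean identity $I_0\widetilde X_z=e^{(\beta-\alpha)(H)}\widetilde{JX}_z$ holds only along the slice, while off-slice $J\widetilde X$ acquires $\a$-valued corrections driven by the $\g^{2e_p}$-components of $\log n$. These corrections are precisely what the ``simple pluripotential argument exploiting the restricted root decomposition of $\n$'' alluded to in the introduction handles: only the coupling $J\g^{2e_p}=\R A_p$ produces nontrivial $\a$-contributions, and by Lemmas~\ref{COMPLEXSTRUCTURE} and~\ref{NBRACKETS} all remaining bracket structure stays in $\n$ and acts trivially on $f$. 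Tracking which first-order corrections survive, and seeing them collapse onto the root-space identities of Lemma~\ref{NBRACKETS}, is what yields the clean block-diagonal form of (i)--(iv).
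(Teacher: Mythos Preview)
Your plan is viable but takes a harder road than the paper. You apply the exterior-derivative Cartan formula to $d^cf$, producing three terms and forcing you to control $I_0\widetilde X$ off the slice. The paper instead applies Cartan's \emph{magic} formula $\mathcal L_{\widetilde X}=d\iota_{\widetilde X}+\iota_{\widetilde X}d$ to the $1$-form $d^cf$: since $f$ and $I_0$ are $N$-invariant one has $\mathcal L_{\widetilde X}d^cf=0$ for $X\in\n$, giving the single identity $d\mu^X=-\iota_{\widetilde X}dd^cf$ with $\mu^X:=d^cf(\widetilde X)$ (equation~(\ref{DIFFER})). Hence $dd^cf(\widetilde X,V)_z=-V_z(\mu^X)$ for \emph{any} tangent vector $V_z$, and one is free to represent $V_z$ by the induced field $\widetilde{JY}$ or $\widetilde{A_j}$ (computed algebraically in $\s$), never needing $I_0\widetilde X$ as a global field; $N$-equivariance of $\mu^X$ together with the explicit slice formula~(\ref{DCf2}) then yields~(\ref{FORMULONE}), from which (i), (iii), (iv) follow directly via Lemma~\ref{NBRACKETS}. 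For (ii) the paper uses antisymmetry to put $E^l\in\n$ in the $\widetilde X$-slot and pairs against $\widetilde{A_j}$, so the whole computation stays on the slice and your first-order Iwasawa expansion is bypassed entirely. Two corrections to your sketch: in (iii)--(iv) the $\partial\widetilde f/\partial a_j$ contribution actually comes from your \emph{middle} term $(JW)((JV)f)=(I_0\widetilde X)(\mu^X)$, whose value at $z$ is precisely $e^{(\beta-\alpha)(H)}\widetilde{JX}(\mu^X)(z)=-e^{(\beta-\alpha)(H)}d^cf(\widetilde{[JX,X]}_z)$, i.e.\ the paper's moment-map derivative in disguise, while your bracket term $(J[V,JW])f$ vanishes at $z$; and the paper does not pin down the constants $s,t$ from (\ref{NORMALIZ1}), (\ref{INNERPRODUCT}), (\ref{BI}) as you propose, but by comparison with the explicit Killing potential of Proposition~\ref{POTENTIALN} (Remark~\ref{CONSTANTS}), importing their positivity separately from the Siegel-domain picture (Remark~\ref{EXPLANATION}).
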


\sn
\begin{proof}  Let $f\colon G/K\to\R$ be a smooth $N$-invariant function.  The computation of   $h_f$ uses the fact  that,   for $X\in\n$, the function $\mu^X\colon G/K\to\R$, given by $\mu^X(z):=d^cf(\widetilde X_z)$, satisfies the identity 
\begin{equation}\label{DIFFER} 
\textstyle d\mu^X=-\iota_{\widetilde X}dd^cf, \end{equation} 
where $d^cf:=df\circ J$ (see \cite{HeSc07}, Lemma 7.1 and \cite{GeIa21}, Sect.\,2).
We begin by determining  $d^cf(\widetilde X_{z})$, for $X\in \n$ and
$z \in G/K$.
By the $\,N$-invariance of $\,f\,$ and of $\,J$ one has 
\begin{equation}\label{DCf} \textstyle  \, d^cf(\widetilde X_{n\cdot z})=d^cf( \widetilde {\Ad_{n^{-1}}X}_{z})\,,\end{equation}

\nsmallskip
for every $\,z \in G/K\,$ and $\,n \in N$. Thus it is sufficient to take $z=aK\in\exp({\mathcal D}) \cdot eK$.
Let   $H=\sum a_jA_j\in\mathcal D$ and $a=\exp H$. Then 
\begin{equation}\label{DCf2}
\textstyle  d^cf(\widetilde X_z)=\begin{cases} 
{1\over 2} e^{-2a_j}\frac{\partial\widetilde f}{\partial a_j}(H)\,, \quad  {\rm for}\ X =E^j\in\g^{2e_j}\\
\quad \quad 0\,, \quad \quad 
\quad \quad \,
{\rm for} \ X\in\g^\alpha,\, {\rm with}\ \alpha \in \Sigma^+ \setminus \{2e_1,\dots,2e_r\}.
\\ \end{cases}\end{equation}
The first part of  equation (\ref{DCf2}) follows from (\ref{TILDE}) and Lemma \ref{COMPLEXSTRUCTURE}\,(a):    
$$\textstyle d^cf((\widetilde {E^j})_z)= 
   e^{-2e_j(H)}df(a_* JE^j)
=\textstyle  {1\over 2} e^{-2a_j} \dds \widetilde f(H+ sA_j)= {1\over 2} e^{-2a_j} \frac{\partial\widetilde f}{\partial a_j}(H).$$
For the second part, let $X\in\g^\alpha$, with $\alpha\in \Sigma^+ \setminus \{2e_1,\dots,2e_r\}$. 
Then $JX\in\g^\beta$, with $\beta\in\Sigma^+$. By (\ref{TILDE})  and the $N$-invariance of $f$, one obtains the desired result
$$ \textstyle  d^cf(\widetilde X_z)=  e^{-\alpha(H)+\beta(H)} df(\widetilde{JX}_z)=0.$$

\mn
{\bf (i) Orthogonality of the blocks.} 
 Let $X \in \g^\alpha$ and  $Y\in  \g^\gamma$, where  $\alpha \in  \Sigma^+$ and  $\gamma \in \{0\} \cup (\Sigma^+ \setminus \{2e_1,\dots,2e_r\})$ are distinct restricted roots (here  $Y\in\a$, when $\gamma=0$).   Then  $JY  \in\g^\beta$, for some  $\beta \in \Sigma^+$.   
 By (\ref{TILDE}) and (\ref{DIFFER}), one has
$$h_f(a_*X,a_*Y)=-dd^cf(a_*X,a_*J Y)=-e^{\alpha(H)+\beta(H)}dd^cf(\widetilde X_z, \widetilde{JY}_z)$$
$$ \textstyle = e^{\alpha(H)+\beta(H)}d\mu^X(\widetilde{JY}_z)= e^{\alpha(H)+\beta(H)}\dds\mu^X(\exp sJY\cdot z)$$
$$ \textstyle = e^{\alpha(H)+\beta(H)}\dds d^cf(\widetilde X_{\exp sJY\cdot z})
= e^{\alpha(H)+\beta(H)}\dds d^cf(\widetilde{Ad_{\exp(-sJY)}X}_z)$$
$$ \textstyle = e^{\alpha(H)+\beta(H)}\dds d^cf(\widetilde X_z-s\widetilde{[JY,X]}_z+o(s^2))$$
\begin{equation} \label{FORMULONE}=-e^{\alpha(H)+\beta(H)}d^cf(\widetilde{[JY,X]}_z).\end{equation}
The brackets  $[JY,X]$ lie  in $\g^{\alpha+\beta}$. Since $\alpha\not=\gamma$, one sees that  $\alpha+\beta \not= 2e_1, \ldots, 2e_r$.  Then,  by  (\ref{DCf2}), the expression (\ref{FORMULONE}) vanishes, proving the orthogonality  of $a_*\g^\alpha$ and 
$a_*\g^\gamma$, for all $\alpha$ and $\gamma$  as above. The $J$-invariance of $h_f$ implies that  $ a_*\a$ is orthogonal to $ a_* \g^\beta$, for all $\beta \in  \Sigma^+ $,  and  concludes the proof of (i). 

\medskip
Next we determine the form  $h_f$ on the essential  blocks.
 
\sn
{\bf (ii) The  form $h_f$ on $a_*\a$.}

\sn
Let $A_j,A_l\in\a$. Since $JA_l=-2E^l$, one has
$$\textstyle h_f(a_*A_j,a_*A_l)=-2dd^cf(a_*E^l,a_*A_j)=-2e^{2e_l(H)} dd^cf((\widetilde{E^l})_z,(\widetilde{A_j}){_z})$$
$$\textstyle =2e^{2e_l(H)} d\mu^{E^l}((\widetilde{A_j}){_z})=2e^{2e_l(H)} \ddt \mu^{E^l}(\exp tA_j \cdot z)$$
$$\textstyle
=2e^{2e_l(H)} \ddt d^cf((\widetilde{E^l})_{\exp tA_j \cdot z}),$$
which, by (\ref{DCf2}), becomes
$$\textstyle =2e^{2e_l(H)} \ddt {1\over 2}e^{-2e_l(H+tA_j)}\frac{\partial\widetilde f}{\partial a_l}(H+tA_j) =
-2 \frac{\partial \widetilde f}{\partial a_l}(H)\delta_{lj}  +\frac{\partial^2 \widetilde f}{\partial a_j\partial a_l}(H).$$
 This concludes the proof of (ii).

\mn
 {\bf \boldmath (iii)   The    form $h_f$ on $a_*\g^{e_j-e_l}$. }

\sn

Let $X,\,X' \in \g^{e_j-e_l} $ be elements of the basis  given in  Remark \ref{BASIS2}\,(b). Then  $JX,\, JX'\in\ \g^{e_j+e_l} $. 
From (\ref{FORMULONE}), (\ref{DCf2}) and Lemma \ref{NBRACKETS}(a)  one has
 $$\textstyle  h_f(a_*X,a_*X )=-dd^cf(a_*X,a_*JX)$$
$$\textstyle = - e^{ (e_j+e_l)(H)} e^{ (e_j-e_l)(H)}d^cf(\widetilde{[JX,X]}_z)$$
\begin{equation} \label{ESSE} \textstyle =- e^{ 2e_j(H)}\left(s d^cf((\widetilde{E^j})_z) \right)
 =-{s \over 2} \frac{\partial \widetilde f}{\partial a_j}(H)\,, \end{equation} 
for some $\,s\in\R\setminus\{0\}$.  By Remark \ref{EXPLANATION}, one has $s>0$.  By  the comparison of (\ref{ESSE})  with the formula obtained in Remark\,\ref{CONSTANTS},  one deduces the  exact value of $s$, namely 
$\textstyle \,s = \frac{4 \Vert X\Vert^2}{{\bf b}}$.   
Therefore, one has
$$
\textstyle h_f(a_*X ,a_*X )=-2\frac{\Vert X \Vert^2}{{\bf b}}\frac{\partial \widetilde f}{\partial a_j}(H), \qquad  h_f(a_*X',a_*X')=-2\frac{\Vert X'\Vert^2}{{\bf b}}\frac{\partial \widetilde f}{\partial a_j}(H),$$   as stated.
From  (\ref{FORMULONE}) and Lemma \ref{NBRACKETS}(b), one obtains   $\,h_f(a_*X,a_*X')=0 $. 
From  (\ref{FORMULONE}), the skew symmetry of $dd^cf$ and the fact that $2(e_j-e_l)\not\in\Sigma^+$, one obtains   $h_f(a_*X,a_*JX )=h_f(a_*X,a_*JX')=0,$  respectively.
Finally, let $X=Z^\mu+ \overline {Z^\mu},\,$ and $\, Y=Z^\nu+ \overline {Z^\nu}$ be elements of the basis of $\g^{e_j-e_l}$  given in Remark \ref{BASIS2}\,(b), for  $\mu,\, \nu\in \Delta^+$  distinct roots satisfying  $\nu\not=\mu,\,\bar \mu$. Then, by   (\ref{FORMULONE})  and Lemma \ref{COMPLEXSTRUCTURE}(b)  
one has
$$h_f(a_*X,a_*Y)  =-e^{2e_j(H)}d^cf(\widetilde{[JY,X]}_z)=0,$$  
since no non-real roots in $\Delta$  have real part equal to $2e_j$. This completes  the proof of (iii).

\mn
 {\bf \boldmath (iv)   The   Hermitian form $h_f$ on $a_*\g^{e_j}$. }

\sn
Let   $X=Z^\mu+\overline{Z^\mu}$ and 
$JX=iZ^\mu + \overline{iZ^\mu}$ be   elements of the basis of $\g^{e_j}$ given in Remark \ref{BASIS2}\,(c). 
Then, from (\ref{FORMULONE}) and Lemma \ref{NBRACKETS}\,(c), one obtains 
$$\textstyle h_f(a_*X,a_*X)=   - e^{2e_j(H)}
d^c f(\widetilde{[JX,X]}_z)$$    
 \begin{equation}\label{TI} \textstyle =\textstyle - e^{2e_j(H)} t \, d^c f({(\widetilde{E^j}})_{z}) = -{t\over 2} \textstyle   \frac{\partial\widetilde f}{\partial a_j}(H),\end{equation}
 for some $ t\in\R\setminus\{0\}.$  
 By Remark \ref{EXPLANATION}, one has $t>0$.  By  the comparison of (\ref{TI})  with the formula obtained in Remark\,\ref{CONSTANTS},  one deduces the  exact value of $t$, namely~$\,t = \frac{4 \Vert X\Vert^2}{{\bf b}}$ and 
 $$\textstyle  h_f(a_*X,a_*X)= h_f(a_*JX,a_*JX)=   -2 \frac{\Vert X\Vert^2}{{\bf b}} \frac{\partial\widetilde f}{\partial a_j}(H).$$
Finally, let $X=Z^\mu+ \overline {Z^\mu}$ and $ Y=Z^{\nu}+\overline {Z^{\nu}}$ be elements of the basis of  $\g^{e_j}$  given in  Remark \ref{BASIS2}\,(c), for $\mu,\, \nu\in \Delta^+$    distinct roots  satisfying  $\nu\not=\mu,\,\bar \mu$. 
Then, by   (\ref{FORMULONE}) and Lemma \ref{COMPLEXSTRUCTURE}(c)   
one has $h_f(a_*X,a_*Y)  =0$.  
This concludes the proof of (iv) and of the proposition.
\end{proof}

  \bn
${\bf Remark.}$ The usual Levi form   
 $L_f^\C$ of $f$   is given by
 $L_f^\C(Z,\overline W)=2(h_f(X,Y)+ih_f(X,JY)),$ 
where $Z=X-iJX$ and $W=Y-iJY$ are elements of type $(1,0)$. 
One easily sees that $L_f^\C$  is (strictly) positive definite if and only if $h_f$ is (strictly) positive definite.
  
\bigskip
\section{$\,N$-invariant Stein domains in $G/K$}
\label{INVARIANTSTEIN}

\medskip

The main goal of this section  is to characterize  the Stein  $\,N$-invariant domains~$D$ in $\,G/K\,$  in terms of an associated $r$-dimensional tube domain.   
We show that  $D$ is Stein if and only  if  the base of the associated tube domain  is convex and satisfies  an
additional  geometric  condition, arising   from the features of the $\,N$-invariant plurisubharmonic functions on $D$.

At the end of the  section we also prove a univalence result for $N$-equivariant Riemann domains over $G/K$. As a by-product,    a precise description of the envelope  of holomorphy of $N$-invariant domains in $G/K$ follows.

Resume the notation introduced in Section 2.  Denote by $\,R:= \exp \big( \oplus \g^{2e_j}\big)$  the  unipotent abelian subgroup  of $G$, isomorphic to $\R^r$. 
The orbit of the base point  $\,eK\in G/K\,$ under the  product of the $r$ commuting    $\,SL_2(\R)$'s contained in $G$  is   the $r$-dimensional $\,R$-invariant closed complex submanifold of $\,G/K\,$
 $$\,R \exp (\a) \cdot eK.$$ 
 By the Iwasawa decomposition of $G$, such manifold  intersects all $N$-orbits in $G/K$.
 Equivalently, 
  $$\,N \cdot (R \exp (\a)\cdot eK) =G/K.$$

The above  facts  together with the  next  proposition can be regarded as an  analogue,  for the $N$-action,  of the  polydisk theorem (cf.\,\cite{Wol72}, p.\,280).  
Denote by $\,\HH \,$ the upper half-plane in $\,\C$, with the usual $\,\R\,$-action by translations.

\mn
\begin{prop}
\label{FACT1}  
 The map  $\,{\mathcal L}:\HH^r \to R\exp\a\cdot eK$, defined  by 
 $$\textstyle \quad   (x_1+iy_1, \dots,x_r+iy_r) \to  
  \exp(\sum_j x_jE^j) \exp( \frac{1}{2}\sum_j \ln({ y_j})A_j)K\,,$$
is an equivariant biholomorphism. 
  \end{prop}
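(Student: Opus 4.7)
The strategy is to reduce to the rank one situation by exploiting the fact that the $r$ subalgebras $\g_j := \R\langle E^j,\theta E^j, A_j\rangle \cong \s\l(2,\R)$ pairwise commute. This commutativity is a consequence of the strong orthogonality of the long roots $2e_1,\ldots,2e_r$, together with the normalization (\ref{NORMALIZ1}). Let $G_j\subset G$ be the connected subgroup with Lie algebra $\g_j$ and $K_j:=G_j\cap K$. Then $G_jK_l=K_lG_j$ for $j\ne l$, the subgroup $G_*=G_1\cdots G_r$ has the form of a quotient of $G_1\times\cdots\times G_r$ by a finite central subgroup contained in $K$, and the orbit map identifies
\[
G_*/K_* \;=\; (G_1/K_1)\times\cdots\times (G_r/K_r),\qquad K_*:=G_*\cap K,
\]
which by Iwasawa in each $G_j$ coincides with $R\exp(\a)\cdot eK$ as a real-analytic submanifold of $G/K$. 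Being a $G_*$-orbit of the base point, this set is a closed complex submanifold of $G/K$ (it is even totally geodesic and the $r$-fold product of Poincar\'e disks in disguise).

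First I would settle the rank one model: for a single $j$, define $\ell_j\colon \HH\to G_j/K_j$ by $\ell_j(x+iy):=\exp(xE^j)\exp(\tfrac12\ln(y)A_j)K_j$. The image lies in $N_jA_j\cdot eK$, which by the Iwasawa decomposition of $G_j$ is all of $G_j/K_j$ minus a set where... actually it is all of $G_j/K_j$ (since $G_j=N_jA_jK_j$). The map $\ell_j$ is a real-analytic diffeomorphism by the uniqueness of the $N_jA_jK_j$ decomposition. To see it is biholomorphic, I would compute $d\ell_j$ at $i\in\HH$: it sends $\partial_x\mapsto \widetilde{E^j}_{eK}$ and $\partial_y\mapsto \tfrac12 \widetilde{A_j}_{eK}$. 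By Lemma \ref{COMPLEXSTRUCTURE}(a), $J E^j = \tfrac12 A_j$, so $d\ell_j$ intertwines the complex structures at $i$. The general point of $\HH$ is reached from $i$ by an element of $N_jA_j$ acting holomorphically on both sides (translation by $x$ and dilation by $e^{2s}$), so $d\ell_j$ is $\C$-linear everywhere and $\ell_j$ is a biholomorphism.

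Next I would assemble the $r$ factors. Since the $G_j$ commute and commute modulo $K$, the product map
\[
\ell_1\times\cdots\times\ell_r\colon \HH^r\longrightarrow (G_1/K_1)\times\cdots\times(G_r/K_r)=R\exp(\a)\cdot eK
\]
is a biholomorphism onto its image, because the identification of the right-hand side as a complex product is induced by the direct sum decomposition of the holomorphic tangent space at $eK$ into the pieces $\R A_j\oplus \R E^j$ (which follows from (\ref{CPLXBIS})). Under this identification, $\ell_1\times\cdots\times\ell_r$ coincides on the nose with the map $\mathcal L$, because $\exp(\sum_j x_jE^j)\exp(\tfrac12\sum_j\ln(y_j)A_j)$ is the product of the factors $\exp(x_jE^j)\exp(\tfrac12\ln(y_j)A_j)$ taken in any order (by pairwise commutativity of the $\g_j$).

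Finally, equivariance is immediate: the $\R^r$-translation action on $\HH^r$ is intertwined with the left $R$-action on $R\exp(\a)\cdot eK$ via the abelian exponential $(t_1,\dots,t_r)\mapsto\exp(\sum_j t_jE^j)$, which is an isomorphism $\R^r\xrightarrow{\sim} R$. The main obstacle is strictly the verification that $\mathcal L$ is a biholomorphism, and this reduces to the rank one case together with the identification of $J$ on $\a\oplus J\a$ given in Lemma \ref{COMPLEXSTRUCTURE}(a); once this is recorded correctly, the rest is formal.
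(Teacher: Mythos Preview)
Your proposal is correct and follows essentially the same route as the paper: reduce to rank one via the pairwise commutativity of the $\s\l(2)$-triples, then verify holomorphicity in rank one using the identity $JE^j=\tfrac12 A_j$ from Lemma~\ref{COMPLEXSTRUCTURE}(a). The only minor difference is in the rank-one verification: the paper computes $d\mathcal L_z\, J\,\tfrac{d}{dx}$ and $J\, d\mathcal L_z\,\tfrac{d}{dx}$ directly at an arbitrary $z=x+iy\in\HH$, whereas you check $\C$-linearity of the differential at the base point $i$ and then propagate to all of $\HH$ via the holomorphic $N_jA_j$-action (translations and dilations); both arguments are standard and equivalent.
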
 

\begin{proof} The map is clearly bijective and equivariant. To prove  that is holomorphic, it is sufficient to consider  the rank-1 case. 
Computing  separately
$$ \textstyle  d\mathcal L_zJ \ddx\big|_z=d\mathcal L_z  \ddy\big|_z = \ddt {\mathcal L}(x+i (y+t))=
\ddt \exp (xE)\exp({1\over 2}\ln (y+t)A) K $$
$$ \textstyle=\ddt \exp (xE)\exp(({1\over 2}\ln y+  \frac{t}{2y}+o(t^2))  A) K =(\exp (xE)\exp({1\over 2}\ln y A))_*\frac{1}{2y}A$$
and
$$\textstyle J\mathcal L_z  \ddx\big|_z=J\ddt {\mathcal L}(x+t+iy) =J\ddt \exp((x+t)E)\exp({1\over 2}\ln yA)  K $$
$$\textstyle =J\ddt \exp(xE)\exp(tE)\exp({1\over 2}\ln yA)  K $$
$$\textstyle =J\ddt \exp(xE)\exp({1\over 2}\ln yA) \exp(t \,Ad_{\exp (-\frac{1}{2}\ln y A)}E) K $$
$$\textstyle = J\exp(xE)_*\exp({1\over 2}\ln yA)_*\frac{1}{y}E=  (\exp (xE)\exp({1\over 2}\ln y A))_*\frac{1}{2y}A,$$

\sn
we obtain the desired identity $\textstyle  d\mathcal L_zJ \ddx\big|_z=J d\mathcal L_z \ddx\big|_z,$ for all $z\in\HH$.
\end{proof}

\sn
 \begin{remark}
 \label{FACT5}  {\rm The closed complex submanifold  $\,R \exp (\a)\cdot eK \,$ can also be regarded as the local orbit of $eK$ under the universal complexification $\,R^\C $ of $R$.
Up to a traslation, $\,\mathcal L\,$ is the local $\,R^\C$-orbit map through $\,eK$.}
\end{remark}

\medskip
As a consequence of the above biholomorphism we obtain a one-to-one correspondence between $\R^r$-invariant tube domains in $\HH^r$ and $N$-invariant domains in $G/K$.
Denote  by $\,L: \R^{>0}\times\ldots\times \R^{>0} \to \a\,$  the diffeomorphism   determined by~$\mathcal L$
\begin{equation}\label{ELLE} \textstyle L(y_1,\ldots,y_r) :=   \frac{1}{2}\sum_j \ln({ y_j})A_j. \end{equation}

 \begin{cor} \label{ASSOCIATEDTUBE}{\bf ($N$-invariant domains in $G/K$ and  tube domains in $\C^r$)}. 
 
\sn 
(i)  Let  $\,D=N\exp (\mathcal D) \cdot eK\,$ be an $\,N$-invariant domain in $G/K$  and let  $\,R\exp(\mathcal D) \cdot eK\,$ be its intersection with   the 
closed complex submanifold  $\,R\exp(\a) \cdot eK\,$.  Then the $r$-dimensional tube domain associated to $D$ is by definition  the preimage of $\,R\exp(\a) \cdot eK\,$ under $\mathcal L$, namely 
$$\,\R^r +i \Omega ,\quad \hbox{ where  $\,\Omega:= L^{-1}(\mathcal D)\,$}. $$ 

\sn
(ii) Conversely,   a   tube domain  $\,\R^r + i\Omega \,$  in $\,\HH^r \,$  determines a unique $\,N$-invariant domain  
$$\, D=N \exp(\mathcal D) \cdot eK, \quad \hbox{ where  $\mathcal D=L(\Omega)$}.$$
  \end{cor}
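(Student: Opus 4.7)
My plan is to reduce the corollary to a bookkeeping exercise built directly on Proposition \ref{FACT1} and equation (\ref{DA}). Essentially all the analytic content is already packaged there; what remains is identifying the two parametrizations.

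\textbf{Step 1: Reduce to a correspondence between domains in $\a$ and in $(\R^{>0})^r$.} By the Iwasawa decomposition $G=NAK$, the slice $\exp(\a)\cdot eK$ meets every $N$-orbit in $G/K$ in exactly one point, so the map $H\mapsto \exp(H)\cdot eK$ is a diffeomorphism of $\a$ onto that slice, and the projection onto the slice realizes $G/K$ as an $N$-bundle over $\a$. Hence (as already recorded in (\ref{DA})) every $N$-invariant domain $D\subset G/K$ has the unique form $D=N\exp(\mathcal D)\cdot eK$ for a domain $\mathcal D\subset\a$, and conversely every domain $\mathcal D\subset\a$ produces such a $D$. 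It remains to match $\mathcal D$ with a tube base $\Omega$. The map $L$ of (\ref{ELLE}) is a diffeomorphism of $(\R^{>0})^r$ onto $\a$ (since $A_1,\dots,A_r$ is a basis of $\a$), so $\Omega:=L^{-1}(\mathcal D)$ is a domain iff $\mathcal D$ is, and then $\R^r+i\Omega$ is a domain in $\HH^r$.

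\textbf{Step 2: Transfer via $\mathcal L$.} Proposition \ref{FACT1} provides the equivariant biholomorphism $\mathcal L\colon\HH^r\to R\exp(\a)\cdot eK$. By inspection of its formula, the restriction of $\mathcal L$ to $i(\R^{>0})^r$ is $(y_1,\dots,y_r)\mapsto \exp(L(y))\cdot eK$, so $\mathcal L$ identifies $i\,\Omega$ with $\exp(\mathcal D)\cdot eK$. Moreover $\mathcal L$ intertwines the real-translation action of $\R^r$ on $\HH^r$ with the $R$-action on $R\exp(\a)\cdot eK$ (via the isomorphism $\R^r\simeq R$, $(x_1,\dots,x_r)\mapsto\exp(\sum_j x_jE^j)$). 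Therefore
\[
\mathcal L(\R^r+i\Omega)=R\cdot\exp(\mathcal D)\cdot eK=R\exp(\mathcal D)\cdot eK.
\]

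\textbf{Step 3: Identify $D\cap(R\exp(\a)\cdot eK)$ with $R\exp(\mathcal D)\cdot eK$.} The inclusion $\supset$ is immediate since $R\subset N$ and $D$ is $N$-invariant. For $\subset$, take $r_0\exp(H)\cdot eK\in D$ with $r_0\in R$ and $H\in\a$; since $r_0\in N$ and $D$ is $N$-invariant, we get $\exp(H)\cdot eK\in D$, forcing $H\in\mathcal D$. Combined with Step 2, this proves (i). Part (ii) follows by reversing the construction: from a tube domain $\R^r+i\Omega\subset\HH^r$ define $\mathcal D:=L(\Omega)$ and then $D:=N\exp(\mathcal D)\cdot eK$; the associated tube domain in the sense of (i) is precisely $\R^r+i\Omega$ by construction.

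\textbf{Main obstacle.} There is no serious obstacle; the heart of the argument is the equivariant biholomorphism of Proposition \ref{FACT1}. The only subtlety is keeping track of the two independent identifications at once: (a) the Iwasawa identification of $N$-invariant domains in $G/K$ with domains in $\a$, and (b) the product identification of $R\exp(\a)\cdot eK$ with $\HH^r$ that turns $R$-orbits into real-translation lines. Once these are correctly aligned, connectedness of $\Omega$ and $\R^r+i\Omega$ follows automatically from the diffeomorphism $L$ and the fact that the $N$-bundle $G/K\to\exp(\a)\cdot eK$ has connected fibers, so that $\mathcal D$ is connected iff $D$ is.
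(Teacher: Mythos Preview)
Your proposal is correct and follows the same approach the paper has in mind: the corollary is stated without proof in the paper, as an immediate consequence of the equivariant biholomorphism $\mathcal L$ of Proposition~\ref{FACT1} together with the Iwasawa parametrization~(\ref{DA}). Your write-up simply makes explicit the bookkeeping (Steps~1--3) that the authors leave to the reader.
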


\sn
\begin{remark}  \label{CONVEXITY} {\rm  If  $\,D\,$ is Stein, then the associated tube domain $\R^r +i \Omega\subset \C^r$ is Stein, being biholomorphic to  the  Stein closed complex submanifold
 $\, R\exp (\mathcal D) \cdot eK\,$  of   $\,D$.  In particular, the  base $\,\Omega\,$   is an open  convex set in 
$\,(\R^{>0})^r$.

On the other hand,  already in the case of the unit ball  $\,\B^n$ in $ \C^n$, with $\,n>1$,  one can   see that the base $\Omega$ of an $N$-invariant Stein subdomain $D$  must be  an entire half-line,  and cannot be   just an arbitrary convex subset of $\R^{>0}$.    }  \end{remark}  

 The main goal of this section is to give a precise  characterization of the convex sets $\,\Omega\subset (\R^{>0})^r\,$
   arising from $N$-invariant Stein domains $D$ in~$G/K$.  As we shall see, their shape is determined by the particular  features    of the  Levi form of the $N$-invariant   functions on $D$, which involve both the Hessian and the gradient of~$\widetilde f$ (cf. Prop. \ref{LEVI}).

 Let $\,f:D \to \R\,$ be an $\,N$-invariant plurisubharmonic function. Then $f$ is uniquely determined by the function $\,\widetilde f(H):=f(\exp H\cdot eK)$ on $\,\mathcal D\,$ (cf. (\ref{EFFETILDE})) 
and also by the   function \begin{equation}\label{EFFEHAT}\,\widehat f({\bf  y}):=f(\exp (L({\bf  y}))K) =\widetilde f(L({\bf y}))\end{equation}
defined for   ${\bf y}\in \Omega,  $  as shown by    the following commutative diagram   
\begin{displaymath} 
\xymatrix@R=1.8em{ 
       \ar[d]_{L} \Omega \ar[rd]^{ \widehat f}     &       \\
 \ar[d]_{\exp} {\mathcal D}\ar[r]^{\widetilde f}      &      \R  \\
  D \ar[ru]^f&      }
\end{displaymath}
Since  the $\,N$-action on $\,D\,$ is proper and every $\,N$-orbit intersects transversally  the smooth slice  $\,\exp(L(\Omega)) \cdot eK\,$ 
in a single point, it is easy to check that the map
$\,f \to \widehat f\,$   is a bijection from the class $\,C^0(D)^N\,$ of continuous
$\,N$-invariant functions on $\,D\,$ and the class $\,C^0(\Omega)\,$ of continuous  functions on $\,\Omega$.   
By Theorem 4.1 in \cite{Fle78}, such a map  is also a bijection between $\,C^\infty(D)^N\,$ and $\,C^\infty(\Omega)$.
Analogous statements hold true for the map $\,f \to \widetilde f$.

Given a non-compact irreducible Hermitian symmetric space,  define the cone
\begin{equation} \label{CONE}\textstyle C:=\begin{cases} (\R^{>0})^r,  \hbox{ in the non-tube case,}\\ (\R^{>0})^{r-1}\times  \{0\}, \hbox{ in the  tube case.}\end{cases}\end{equation} 

\mn
The next lemma characterizes  the plurisubharmonicity of a smooth $N$-invariant function $f$ in terms of the corresponding  functions   $\widetilde f$ and $\widehat f$.

\begin{prop}
\label{PSHPOLIDISC2} 
Let $\,D\,$ be an $\,N$-invariant domain in $G/K$ and let $\,f:D \to \R\,$ be a smooth, $\,N$-invariant, plurisubharmonic function.
Then the following conditions are equivalent:

\begin{itemize}
 \smallskip
\item[(i)] $\,  f\,$ is plurisubharmonic (resp. strictly plurisubharmonic) at $z=aK$, with $a=\exp(H)$ and $H\in\mathcal D$; 

 \smallskip
\item[(ii)]
the form 
\begin{equation}
\label{LEVIA}
\textstyle  \Big (-2 \delta_{jl} \frac{\partial \widetilde f}{\partial a_l}(H) +\frac{\partial^2 \widetilde f}{\partial a_j\partial a_l}(H)\ \Big)_{j,l=1,\ldots,r}
 \end{equation}
  in Proposition \ref{LEVI}(ii)  is positive semidefinite (resp. positive definite) and 
 $$\grad \widetilde f(H) \cdot {\bf v} \le 0
  ~~\hbox{(resp. $< 0$)}, \quad \hbox{ for all ${\bf v}\in \overline C\setminus\{{\bf 0}\}$} ;$$

 \smallskip
\item[(iii)] the Hessian of $\,\widehat f\,$ is positive semidefinite (resp. positive definite)
at ${\bf y}=(y_1,\ldots,y_r)=L^{-1}(H)$ and 
\begin{equation}\label{EFFEHATDEC}\textstyle  
 \grad \widehat f({\bf y}) \cdot {\bf v} \le 0
  ~~\hbox{(resp. $< 0$)}, \quad \hbox{ for all ${\bf v}\in \overline C\setminus\{{\bf 0}\}$}. 
\end{equation}  

\end{itemize}
\end{prop}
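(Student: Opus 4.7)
The proof splits naturally into two halves: (i)$\Leftrightarrow$(ii) is obtained by reading off the block decomposition of the Levi form supplied by Proposition~\ref{LEVI}, while (ii)$\Leftrightarrow$(iii) is a purely computational translation under the change of variables $L$.

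For (i)$\Leftrightarrow$(ii), I would use the fact that $f$ is (strictly) plurisubharmonic at $z=aK$ iff the $J$-invariant real symmetric form $h_f$ is positive semidefinite (resp.\ positive definite) on $T_z(G/K)=a_*\s$. By Proposition~\ref{LEVI}(i) the form $h_f$ is block-diagonal on
\[
a_*\s \;=\; a_*\a \;\oplus\; a_*J\a \;\oplus\; \bigoplus_{j<l} a_*\bigl(\g^{e_j-e_l}\oplus \g^{e_j+e_l}\bigr) \;\oplus\; \bigoplus_{j} a_*\g^{e_j},
\]
with the last summand present only in the non-tube case. On the $a_*\a$-block (hence also on $a_*J\a$ by $J$-invariance) the matrix of $h_f$ is (\ref{LEVIA}); on every other block, by Proposition~\ref{LEVI}(iii)--(iv), it is a strictly positive multiple of $-\partial \widetilde f/\partial a_j(H)$. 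Thus $h_f$ is positive semidefinite (resp.\ positive definite) iff (\ref{LEVIA}) has the corresponding positivity property \emph{and} $-\partial \widetilde f/\partial a_j(H)\ge 0$ (resp.\ $>0$) for every index $j$ arising from the non-$\a$ blocks. A short inspection of $\Sigma^+$ identifies these indices: in the tube case they are $j=1,\ldots,r-1$ (the blocks $\g^{e_j-e_l}$ with $l>j$; no $\g^{e_j}$ exists and the block $a_*J\a=\oplus_j a_*\g^{2e_j}$ is absorbed into the matrix~(\ref{LEVIA})), whereas in the non-tube case the blocks $\g^{e_j}$ contribute all $j=1,\ldots,r$. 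By definition~(\ref{CONE}) of $C$, this is exactly the condition $\grad \widetilde f(H)\cdot \mathbf{v}\le 0$ (resp.\ $<0$) for all $\mathbf{v}\in\overline C\setminus\{\mathbf{0}\}$, yielding (ii).

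For (ii)$\Leftrightarrow$(iii), I apply the chain rule to $\widehat f(\mathbf{y})=\widetilde f(L(\mathbf{y}))$ using $a_j=\tfrac12\ln y_j$, obtaining
\[
\frac{\partial \widehat f}{\partial y_j}(\mathbf{y}) = \frac{1}{2y_j}\frac{\partial \widetilde f}{\partial a_j}(H),\qquad
\frac{\partial^2 \widehat f}{\partial y_j\partial y_l}(\mathbf{y}) = \frac{1}{4y_jy_l}\!\left(-2\delta_{jl}\frac{\partial \widetilde f}{\partial a_l}(H) + \frac{\partial^2 \widetilde f}{\partial a_j\partial a_l}(H)\right).
\]
Because $y_1,\ldots,y_r>0$, the first identity gives the termwise equivalence of the gradient conditions in (ii) and (iii). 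The second identity displays the Hessian of $\widehat f$ at $\mathbf{y}$ as $\tfrac14 D^{-1}MD^{-1}$, where $D=\mathrm{diag}(y_1,\ldots,y_r)$ and $M$ is the matrix (\ref{LEVIA}); since congruence by the positive-definite diagonal $D^{-1}$ preserves both positive semidefiniteness and positive definiteness, the two Hessian conditions are also equivalent.

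The main obstacle is really a bookkeeping one: carefully reading off from Proposition~\ref{LEVI} which indices $j$ are constrained by the non-$\a$ blocks, so that the resulting system of one-sided inequalities assembles \emph{exactly} into the cone condition on $\overline C$ in each of its two (tube and non-tube) incarnations. Once this identification is secured, both equivalences reduce to elementary linear algebra and a single change of variables.
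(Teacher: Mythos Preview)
Your proof is correct and follows essentially the same route as the paper: (i)$\Leftrightarrow$(ii) is read off directly from the block decomposition of Proposition~\ref{LEVI}, and (ii)$\Leftrightarrow$(iii) is the change of variables $a_j=\tfrac12\ln y_j$, yielding exactly the paper's identity~(\ref{HESSEFFETILDE}) (you differentiate $\widehat f$ in terms of $\widetilde f$, the paper does the reverse, but the resulting congruence $\mathrm{Hess}\,\widehat f=\tfrac14 D^{-1}MD^{-1}$ is the same). Your write-up is in fact more explicit than the paper's on one point: where the paper says only that (i)$\Leftrightarrow$(ii) ``follows directly from Proposition~\ref{LEVI}'', you spell out why the indices $j$ coming from the $\g^{e_j\pm e_l}$ and $\g^{e_j}$ blocks assemble precisely into the cone $\overline C$ of~(\ref{CONE}) in each of the tube and non-tube cases.
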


 \medskip
 \begin{proof}
The equivalence   $(i) \Leftrightarrow (ii)$ follows directly from Proposition\,\ref{LEVI}.\pn
$(ii) \Leftrightarrow (iii)$ Since  $\,L(y_1,\ldots,y_r) =(\frac{1}{2}\ln({y_1}), \dots, \frac{1}{2}\ln({ y_r} ))\,$ (see (\ref{ELLE})),
one has 
  $\widetilde f (a_1,\ldots, a_r) = \widehat f(e^{2a_1} ,\ldots,e^{2a_r} ) \,.$
Therefore
\begin{equation}
\label{DEREFFETILDE}
\textstyle 
\frac{\partial \widetilde f}{\partial a_j }(a_1,\dots, a_r)=
2\frac{\partial \widehat f}
{  \partial y_j } {\scriptstyle (e^{2a_1},\ldots,e^{2a_r} )}e^{2 a_j}
\end{equation}
\begin{equation} \label{HESSIANEFFETILDE}\textstyle 
\frac{\partial^2 \widetilde f}{\partial a_j   \partial a_l}(H)=   
4\frac{\partial^2 \widehat f}
{  \partial  y_j   \partial  y_l}{\scriptstyle (e^{2a_1},\ldots,e^{2a_r} )}e^{2 a_j}e^{2 a_l}
+ 4
\frac{\partial \widehat f}
{  \partial  y_j }{\scriptstyle (e^{2a_1} ,\ldots,e^{2a_r} )} e^{2 a_j}\delta_{jl}\,.\end{equation}
By combining formulas (\ref{DEREFFETILDE})  and
(\ref{HESSIANEFFETILDE})   one obtains
\begin{equation}
\label{HESSEFFETILDE}
\textstyle\big (4 \frac{\partial^2  \widehat f}{\partial  y_j   \partial
y_l}e^{2 a_j}e^{2 a_l}
\big )_{j,l}=\big ( \frac{\partial^2 \widetilde f}{\partial   a_j  
\partial a_l}-2\,\frac{\partial  \widetilde f}{\partial  
a_j }\delta_{jl}  \big )_{j,l}.
\end{equation}
Also, by (\ref{DEREFFETILDE}),  the same monotonicity conditions hold both for  $\widetilde f$ and  for $\widehat f$.
  \end{proof}

\mn
\begin{defi}\label{STABLYCVX}
A smooth function $\,g\colon \R^r\to\R\,$ is  convex (resp. stably convex) if
its Hessian  is   semidefinite (positive definite).
\end{defi}

\sn
\begin{remark}
 \label{FACT23} {\rm The above lemma shows that the function $\widehat f$ corresponding to a smooth  $N$-invariant plurisubharmonic function is not just an arbitrary smooth convex function, but it must satisfy the additional monotonicity conditions (\ref{EFFEHATDEC}).
 (cf. Rem.\,\ref{DECGRADIENT}).}
 {\rm
\REM{ It is well known that an 
 $\,\R^r$-invariant smooth function  on a tube domain  $\,\R^r +i \Omega\,$ is (strictly) plurisubharmonic if and only its restriction to the base $\Omega$ is     
 (stably) convex. Hence the above equivalent conditions can be reformulated as follows:
  the restriction of the smooth, $N$-invariant function $f$ to the embedded tube domain $\,R\exp (\mathcal D) \cdot eK \cong \R^r +i \Omega\,$ is (strictly) plurisubharmonic. 
As we shall see in Section 5, such a restriction must necessarily satisfy some additional conditions.}}
\end{remark}

\sn
\begin{defi} \label{CONEINVARIANT} 
A set $\,\Omega \subset \R^r\,$ is  
$C$-invariant if  $\,{\bf y}\in \Omega\, $ implies $\,{\bf y}+C \subset  \Omega $
Equivalently, if  $\,{\bf y}\in \Omega\, $ implies $\,{\bf y}+\overline C \subset  \Omega,$ where $\overline C$ denotes the closure of $C$.
\end{defi}

\sn\begin{theorem}
\label{CASOLISCIO1} 
Let $\,G/K\,$ be   a non-compact irreducible Hermitian symmetric space  and let $D$ 
 be an $\,N$-invariant domain 
in $\,G/K$. 
Then $\,D\,$ is Stein if and only if  the base $\,\Omega\,$ of the associated tube domain is convex and $C$-invariant.
\end{theorem}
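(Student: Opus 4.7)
The plan is to reduce to the case where $\,\Omega\,$ has smooth boundary, in which $\,D\,$ is a smoothly bounded domain whose Levi pseudoconvexity can be tested blockwise through Proposition \ref{LEVI}, and then to drop the smoothness hypothesis by exhausting $\,\Omega\,$ from inside.

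For the forward direction, assume $\,D\,$ is Stein. The submanifold $\,R\exp(\a)\cdot eK\,$ of $\,G/K\,$ is closed and complex, so $\,D\cap R\exp(\a)\cdot eK=R\exp(\mathcal D)\cdot eK\,$ is Stein as a closed complex submanifold of $\,D$; via the biholomorphism $\,\mathcal L\,$ of Proposition \ref{FACT1} this is the tube $\,\R^r+i\Omega$, and the classical Bochner tube theorem forces $\,\Omega\,$ to be convex. For $\,C$-invariance I would use the freeness and properness of the $\,N$-action, together with the transversal slice $\,\exp(\mathcal D)\cdot eK$, to produce a smooth strictly plurisubharmonic $\,N$-invariant exhaustion $\,f\,$ of $\,D$; Proposition \ref{PSHPOLIDISC2} then gives $\,\grad\widehat f({\bf y})\cdot{\bf v}<0\,$ for every $\,{\bf y}\in\Omega\,$ and $\,{\bf v}\in\overline C\setminus\{{\bf 0}\}$. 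If $\,\Omega\,$ failed to be $\,C$-invariant, some ray $\,{\bf y}_0+t{\bf v}\,$ with $\,{\bf v}\in C\,$ would leave $\,\Omega\,$ at a finite time $\,T$; along it $\,\widehat f\,$ would stay bounded by $\,\widehat f({\bf y}_0)$, yet the corresponding points $\,\exp L({\bf y}_0+t{\bf v})\cdot eK\,$ in $\,D\,$ would escape every compact subset of $\,D\,$ as $\,t\to T^-$, contradicting the exhaustion property.

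For the reverse direction, I would first treat the smoothly bounded case. Given a smooth defining function $\,\widehat\rho\,$ for $\,\Omega$, convexity makes its Hessian positive semidefinite on the tangent space to $\,\partial\Omega$, while $\,C$-invariance forces $\,\partial\widehat\rho/\partial y_j\le 0\,$ at each boundary point for every index $\,j\,$ corresponding to a positive coordinate of $\,C$. Lifting $\,\widehat\rho\,$ to an $\,N$-invariant smooth defining function $\,\rho\,$ near $\,\partial D$, identity (\ref{HESSEFFETILDE}) carries the Hessian control to the $\,\a$-block of $\,h_\rho\,$ in Proposition \ref{LEVI}(ii), while Proposition \ref{LEVI}(iii)--(iv) displays each remaining nonzero block as a nonnegative multiple of $\,-\partial\widetilde\rho/\partial a_j$. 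Combined with the block-orthogonality of Proposition \ref{LEVI}(i) and the $\,J$-invariance of $\,h_\rho$, this makes $\,h_\rho\,$ positive semidefinite on the complex tangent space to $\,\partial D\,$ at every boundary point. Thus $\,D\,$ is a smoothly bounded Levi pseudoconvex domain, and hence Stein.

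The general case then follows by exhausting $\,\Omega\,$ from inside by an increasing sequence of open, convex, $\,C$-invariant subdomains $\,\Omega_n\,$ with smooth boundary; the associated $\,N$-invariant domains $\,D_n\,$ are Stein by the smoothly bounded case, and $\,D=\bigcup_n D_n\,$ is Stein by the Behnke--Stein theorem on increasing unions. The main obstacle is precisely this approximation step: ordinary smoothing of $\,\Omega\,$ preserves convexity and yields smooth boundary, but in general destroys $\,C$-invariance. The natural remedy, matching the hint in the introduction, is to replace symmetric mollification by a one-sided averaging along $\,-\overline C\,$ directions, or equivalently to write $\,\Omega\,$ as an intersection of $\,C$-invariant affine half-spaces and smooth nested finite subfamilies, so that the approximants remain convex and $\,C$-invariant while acquiring smooth boundary.
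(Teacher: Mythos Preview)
Your reverse direction is essentially the paper's argument: in the smooth case you test Levi pseudoconvexity blockwise via Proposition~\ref{LEVI}, and in the general case you exhaust by smooth convex $C$-invariant subdomains. The paper carries out the approximation explicitly (Lemma~\ref{APPROSSIMANTI}): it mollifies $-\ln d_\Omega$ and adds $\varepsilon\sum_j 1/y_j$ to restore strict convexity and strict $\overline C$-monotonicity, then takes sublevel sets. Your sketch (``one-sided averaging'' or intersecting finitely many $C$-invariant half-spaces) is vaguer but aims at the same goal; be aware that it is precisely this step that requires care, and that the paper's perturbation term is what guarantees both smooth boundary and $C$-invariance simultaneously.

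The forward direction, however, has a genuine gap. You propose to use a smooth strictly plurisubharmonic $N$-invariant \emph{exhaustion} $f$ of $D$, but no such function exists: the sublevel sets $\{f<c\}$ are $N$-invariant and therefore contain entire $N$-orbits, which are closed and non-compact since $N$ is non-compact and acts freely. Hence $\{f<c\}$ is never relatively compact in $D$ unless it is empty. Your contradiction (``$\widehat f$ stays bounded while the points escape every compact subset'') is precisely an appeal to properness of $f$, which cannot hold. One might try to reinterpret the argument with $\widehat f$ an exhaustion of $\Omega$, but then the existence of a stably convex $\overline C$-decreasing \emph{exhaustion} of $\Omega$ is essentially equivalent to the $C$-invariance you want to prove, so the argument becomes circular.

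The paper avoids this by a local argument. In the smooth case it takes a local defining function near a hypothetical boundary point ${\bf z}\in({\bf y}+C)\cap\partial\Omega$; the outward derivative along ${\bf v}={\bf z}-{\bf y}\in C$ is positive, so some $\partial\widetilde f/\partial a_j>0$, and Proposition~\ref{LEVI}(iii)--(iv) then exhibits a $J$-invariant subspace of the complex tangent to $\partial D$ on which the Levi form is negative. In the general case it slides a small ball $\B_\varepsilon({\bf y}+t{\bf v})$ inside $\Omega$ until it touches $\partial\Omega$ at some ${\bf w}$, observes that the outer normal to the ball there has a positive component in a $C$-direction, and applies the smooth case to the $N$-invariant domain over the ball together with Lemma~\ref{INCLUDED} to conclude that $D$ is not Stein. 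Either of these local mechanisms would repair your forward direction.
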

 
The proof of  the above theorem is divided into two parts. If  $D$ has smooth boundary,   then  the argument relies
 on  the computation of the Levi form of smooth, $\,N$-invariant functions  on $\,D\,$ (Prop.\,\ref{LEVI}) and  some 
elementary convex-geometric  properties of $\,\Omega$.

 In the general case, the proof of the theorem is obtained by realizing  $\,D\,$ as an increasing union of  Stein, $\,N$-invariant domains with smooth boundary. 

 \nmedskip
 {\bf Proof of Theorem \ref{CASOLISCIO1}: the smooth case}. 
 The rank-1 tube case is trivial, since  every  $\R$-invariant domain in the upper half-plane $\HH$ is Stein. 
 So we deal with the remaining cases:
 the rank-one non-tube case   and the higher rank cases. 
 
We  use the notation ${\bf y}=(y_1,\ldots,y_r)$, for elements in $\R^r$.  
Let $\,D\subset G/K\,$ be a Stein, $\,N$-invariant   domain with smooth boundary  and let $\R^r+i\Omega\subset \C^r$ be its associated tube domain.
Then $\,\Omega\,$ is a convex set with  smooth boundary 
(cf.\,Rem.\,\ref{CONVEXITY}). 
 Assume by contradiction that $\Omega$ is not $C$-invariant, i.e.\,there exist $\,{\bf y} \in \Omega\,$ and $ {{\bf z}}\in 
 ({\bf y}+C)\cap\partial\Omega$. By the convexity of $\Omega$,  the open segment from $\bf y$ to $ {\bf z} $ is contained in $\Omega$. In addition,   the vector
 ${\bf v} ={{\bf z}-\bf y} \in C$  
  is transversal to the  tangent hyperplane 
 $T_{ {\bf z}}\partial \Omega$ 
 and points outwards. 
 Therefore, given a smooth local defining function 
 $\widehat f$ of $\partial \Omega$ near $ {\bf z}$, one has 
 $$\textstyle \frac{\partial \widehat f}{\partial {\bf v}}({\bf  z})=\grad \widehat f({\bf  z})\cdot {\bf v}>0.$$
In the tube case, the above inequality and   (\ref{DEREFFETILDE}) imply that $\frac{\partial \widetilde f}{\partial a_j}({H})>0$,   for some $j \in \{1,\dots, r-1\}$. Then, by  Proposition \ref{LEVI}\,(iii), the Levi form of the corresponding $N$-invariant function $f$ is negative definite on  the  $J$-invariant subspace $a_* \g^{e_j-e_l}\oplus a_* \g^{e_j+e_l}$ of $T_{aK}(\partial D)$, the tangent space to $\partial D$ in $aK$.
 In the non-tube case, one has $\frac{\partial \widetilde f}{\partial a_j}({H})>0$,   for some $j \in \{1,\dots, r\}$.  By   Proposition \ref{LEVI}\,(iv), the Levi form of the corresponding $N$-invariant function $f$ is negative definite on  the  $J$-invariant subspace  $ a_* \g^{e_j}$ of  $T_{aK}(\partial D)$.
This contradicts the fact that $f$ is a defining function of the Stein $N$-invariant domain $D$ and proves that $\Omega$ is $C$-invariant.

Conversely, assume that $\Omega$ is convex and $C$-invariant.  We  prove that  $D$ is Stein by showing that it is Levi-pseudoconvex, i.e.   for all points $aK\in \partial D$ and  local defining functions $f$ of $ D$ near $aK$,  one has $h_f(X ,X) \ge 0$,  for every  tangent vector~$X\in T_{aK}\partial D\cap J T_{aK}\partial D$, the complex tangent space to $\partial D$ at~$aK$. 

Let ${\bf z}\in \partial \Omega$ and let $aK= \mathcal L ({\bf z})$. Denote by  $W:= T_{{\bf z}}\partial \Omega$ the tangent space to $\partial \Omega$ in ${\bf z}$. 
One can verify that the complex tangent space to $\partial D$ at~$aK$ is given by 
 $$ a_* (\bigoplus \g^{e_j\pm e_l}\oplus \bigoplus \g^{e_j})\oplus (\mathcal L_*)_{\bf z}W\oplus J(\mathcal L_*)_{\bf z}W.$$

Let ${\bf v}=(v_1,\ldots,v_r)$ be an outer normal vector to $W$ in $\R^r$.  The  $C$-invariance and the convexity of $\Omega$ imply  that $v_j\le 0$, for $j=1,\ldots,r$ in the non-tube case, and  $v_j\le 0$, for $j=1,\ldots,r-1$ in the  tube case.  Otherwise  the space $W$
would intersect $\,{\bf y} +C\,$, for every $\,{\bf y}\in \Omega$, yielding a contradiction. 

 Let $\widehat f$ be a smooth local defining function of $\Omega$ near ${\bf z}$. By the  convexity of $\Omega$,  the Hessian  $\,Hess(\widehat f)({\bf z})\,$ is positive definite on $W$. Moreover,  as the gradient  $\grad \widehat f({\bf z})$ is a positive multiple of $\bf v$, one has  $\frac{\partial \widehat f}{\partial y_j}({\bf z})\le 0$, for  all $j=1,\ldots,r$, in the non-tube case, and  
 $\frac{\partial \widehat f}{\partial y_j}({\bf z})\le 0$, for  all $j=1,\ldots,r-1$, in the tube case.
 
Let $f$ be the corresponding $N$-invariant  local defining function of  $ D$  near  $aK= \exp L( {\bf z})K$. 
 By Proposition \ref{PSHPOLIDISC2},  the Levi form of $f$ is positive definite on  $(\mathcal L_*)_{\bf z}W\oplus J(\mathcal L_*)_{\bf z}W\subset a_*\a\oplus a_*J\a$.

In addition, by (\ref{DEREFFETILDE})  
and Proposition \ref{LEVI},  the Levi form of $f$ is positive definite on  $a_* (\bigoplus \g^{e_j\pm e_l}\oplus \bigoplus \g^{e_j})$.
As a result,  $D$ is Levi pseudoconvex in $ aK=\exp L( {\bf z})K$.  Since $aK$ is an arbitrary point in $\partial D\cap \exp\a\cdot eK$  and both $D$ and $f$ are $N$-invariant, the domain $D$ is Levi-pseudoconvex and therefore Stein, as desired.

\bigskip
 In order to prove Theorem \ref{CASOLISCIO1} in the non-smooth case,  
 we need some preliminary Lemmas.


\sn
\begin{lem}
\label{INCLUDED} 
Let $D$ be a  domain in a Stein manifold, let $D'\subset D$ be a subdomain  with smooth boundary
and let $z \in \partial D \cap \partial D'$. If $D'$ is not Levi pseudoconvex in $z$, then $D$ is not Stein. 
\end{lem}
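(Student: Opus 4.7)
The plan is to invoke the classical local extension phenomenon at non-Levi-pseudoconvex $C^2$ boundary points, often referred to as the Levi (or Hartogs) extension theorem. Since $D'$ is smooth near $z$ and the Levi form of a defining function has a negative eigenvalue on the complex tangent space at $z$, there exists an open neighborhood $U$ of $z$ in the ambient Stein manifold such that every function holomorphic on $D'\cap U$ extends holomorphically to the whole of $U$. This is the only non-trivial analytic input; once it is in hand, the rest is purely formal.

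Next I would transfer this extension property from $D'$ to $D$. Because $D'\subset D$, any $f\in\O(D)$ restricts to a holomorphic function on $D'\cap U$, and hence extends holomorphically to a neighborhood of $z$ by the step above. Thus every function in $\O(D)$ extends across the boundary point $z\in\partial D$.

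Finally I would derive a contradiction with the Steinness of $D$. A Stein open subset of a Stein manifold is a domain of holomorphy in that ambient manifold (Docquier--Grauert); in particular, for each boundary point $z\in\partial D$ there must exist some $f\in\O(D)$ which does not extend holomorphically to any neighborhood of $z$ in the ambient manifold. The previous step rules this out, so $D$ cannot be Stein.

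\textbf{Main obstacle.} The substantive content is entirely concentrated in the Levi extension step; everything else is bookkeeping. If one wanted a self-contained argument, the main work would be to produce, near a $C^2$ boundary point where the Levi form has a negative eigenvalue, an analytic disc family (or equivalently a Hartogs figure) straddling $\partial D'$ on which the Kontinuit\"atssatz applies. In the present paper, however, this is a standard local fact about complex manifolds and can simply be cited.
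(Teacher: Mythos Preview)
Your argument is correct. Both your proof and the paper's proof reduce the lemma to a standard local fact about non-pseudoconvex boundary points, but they package that fact differently.

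The paper's proof is geometric: from the failure of Levi pseudoconvexity at $z$ it produces (citing Range) a one-dimensional complex submanifold $M$ through $z$ with $M\setminus\{z\}\subset D'\subset D$. The existence of such a disc touching $\partial D$ from inside immediately shows that $D$ is not Hartogs pseudoconvex, hence not Stein.

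Your route is function-theoretic: you invoke the Levi extension phenomenon to force every $f\in\O(D)$ to extend across $z$, and then contradict the fact that a Stein open set in a Stein manifold is a domain of holomorphy (Docquier--Grauert together with holomorphic convexity, which yields for each boundary point a function unbounded along an approaching sequence). This is equally valid; indeed the analytic-disc construction you mention in your ``Main obstacle'' paragraph is exactly what the paper uses. One minor imprecision: you state that functions on $D'\cap U$ extend ``to the whole of $U$''. The classical statement only guarantees extension to some neighbourhood of $z$ (possibly after shrinking $U$), but that is all your contradiction needs, so nothing is lost.

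In short, the two proofs are different phrasings of the same classical input; the paper's disc formulation is slightly more direct since it bypasses the Docquier--Grauert step, while yours makes explicit why holomorphic functions on $D$ cannot detect the boundary point $z$.
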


 \begin{proof}
Under our assumption, there exists a one dimensional complex submanifold $M$ through $z$ in $\,X\,$ with 
$M\setminus \{z\} \subset D'$ (\cite{Ran86}, proof of Thm.\,2.11, p.\,56). This implies that $D$ is not Hartogs pseudoconvex
(\cite{Ran86}, Thm.\,2.9, p.\,54) and  in particular  it is   not Stein.
 \end{proof}


\sn

For a domain $\Omega $  in $\R^r$, denote by $d_\Omega\colon \Omega\to\R$ the distance function from the boundary (if ${\bf z}\in\Omega$, then  $d_\Omega({\bf z})$  is by definition the  radius of the largest ball centered in ${\bf z}$  and contained in $\Omega$).  The next lemma is a  known characterization of convex domains.
 
\begin{lem}
\label{LOG} 
A proper subdomain $\Omega$ of $\,\R^r\,$ is convex if and only if the function $- \ln d_\Omega: \Omega \to \R$ is convex.
 \end{lem}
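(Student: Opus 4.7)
The plan is to establish each implication separately, with the forward direction going through the stronger fact that $d_\Omega$ itself is concave, and the reverse direction exploiting sublevel sets of convex functions.

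For the forward direction, I would first show that if $\Omega$ is convex then $d_\Omega$ is concave on $\Omega$. Given $\mathbf{x}, \mathbf{y}\in\Omega$ with $r_1=d_\Omega(\mathbf{x})$, $r_2=d_\Omega(\mathbf{y})$ and $t\in[0,1]$, the key observation is that any vector $\mathbf{v}$ with $|\mathbf{v}|\leq tr_1+(1-t)r_2$ can be split as $\mathbf{v}=t\mathbf{a}+(1-t)\mathbf{b}$ with $|\mathbf{a}|\leq r_1$ and $|\mathbf{b}|\leq r_2$ (for instance by scaling $\mathbf{v}$ appropriately). Then $t\mathbf{x}+(1-t)\mathbf{y}+\mathbf{v}=t(\mathbf{x}+\mathbf{a})+(1-t)(\mathbf{y}+\mathbf{b})$ lies in $\Omega$ by convexity, so $d_\Omega(t\mathbf{x}+(1-t)\mathbf{y})\geq tr_1+(1-t)r_2$. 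Since $\ln$ is concave and non-decreasing on $\R^{>0}$, composing with the concave function $d_\Omega$ gives that $\ln d_\Omega$ is concave, hence $-\ln d_\Omega$ is convex.

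For the reverse direction, I would use the classical sublevel-set argument. Assume $-\ln d_\Omega$ is convex on $\Omega$. For each $\epsilon>0$ define
\[
\Omega_\epsilon:=\{\mathbf{z}\in\Omega:d_\Omega(\mathbf{z})>\epsilon\}=\{\mathbf{z}\in\Omega:-\ln d_\Omega(\mathbf{z})<-\ln\epsilon\}.
\]
As a strict sublevel set of a convex function on the (possibly non-convex) open set $\Omega$, the set $\Omega_\epsilon$ is convex provided it is shown to be convex along segments contained in $\Omega$; here one has to be a bit careful since convexity of $-\ln d_\Omega$ is only asserted segment-by-segment in $\Omega$. The point is that if $\mathbf{x},\mathbf{y}\in\Omega_\epsilon$ and the open segment $(\mathbf{x},\mathbf{y})$ were to exit $\Omega$, then by continuity $-\ln d_\Omega$ would blow up to $+\infty$ along a sequence in $\Omega$ approaching an exit point, contradicting the uniform upper bound $-\ln\epsilon$ obtained from convexity on any sub-segment staying inside $\Omega$. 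Hence $(\mathbf{x},\mathbf{y})\subset\Omega$ and convexity of $-\ln d_\Omega$ along this segment yields $(\mathbf{x},\mathbf{y})\subset\Omega_\epsilon$. Finally, $\Omega=\bigcup_{\epsilon>0}\Omega_\epsilon$ is a nested union of convex sets, hence convex.

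I expect the most delicate point to be the one just described: showing that an entire segment joining two points of $\Omega_\epsilon$ actually remains inside $\Omega$, so that the convexity hypothesis of $-\ln d_\Omega$ can be applied to it. Once this is handled (using the $+\infty$ blow-up of $-\ln d_\Omega$ at $\partial\Omega$ together with the chord bound for convex functions on an interval), the remainder of the argument is routine.
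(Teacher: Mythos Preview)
The paper does not actually prove this lemma; it is stated as ``a known characterization of convex domains'' and left without proof. So there is nothing to compare against, and your proposal stands on its own.

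Your forward direction is correct and standard: concavity of $d_\Omega$ on a convex $\Omega$ is exactly the splitting argument you give, and composing the concave increasing map $\ln$ with a positive concave function yields a concave function, so $-\ln d_\Omega$ is convex.

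For the reverse direction there is a genuine gap at precisely the spot you flag as delicate. You claim a ``uniform upper bound $-\ln\epsilon$ obtained from convexity on any sub-segment staying inside $\Omega$,'' but convexity along a sub-segment $[\mathbf{x},\gamma(s)]\subset\Omega$ only bounds interior values by a convex combination of $-\ln d_\Omega(\mathbf{x})$ and $-\ln d_\Omega(\gamma(s))$, and the latter tends to $+\infty$ as $\gamma(s)$ approaches the exit point. You cannot bring the value at $\mathbf{y}$ into play without already knowing the full segment $[\mathbf{x},\mathbf{y}]$ lies in $\Omega$, which is what you are trying to prove. One clean repair (using that $\Omega$ is a \emph{domain}, hence connected): show that the set
\[
\{(\mathbf{x},\mathbf{y})\in\Omega\times\Omega:[\mathbf{x},\mathbf{y}]\subset\Omega\}
\]
is open and closed in the connected space $\Omega\times\Omega$. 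Openness is elementary. For closedness, if $[\mathbf{x}_n,\mathbf{y}_n]\subset\Omega$ and $(\mathbf{x}_n,\mathbf{y}_n)\to(\mathbf{x},\mathbf{y})$ in $\Omega\times\Omega$, choose $\epsilon$ with $d_\Omega(\mathbf{x}),d_\Omega(\mathbf{y})>2\epsilon$; for large $n$ convexity of $-\ln d_\Omega$ along $[\mathbf{x}_n,\mathbf{y}_n]$ forces $d_\Omega>\epsilon$ on that whole segment, hence $[\mathbf{x},\mathbf{y}]\subset\Omega$ by passing to the limit. This replaces your blow-up argument. (Alternatively, if one reads ``$-\ln d_\Omega:\Omega\to\R$ is convex'' in the strict sense that the domain of a convex function must itself be convex, the reverse implication is a tautology.) Note finally that the paper only ever uses the forward implication, in Lemma~\ref{APPROSSIMANTI}.
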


\bn
In  what follows, 
for a fixed domain $\Omega$ in $\R^r$, we  denote $$u:= -\ln d_\Omega.$$  

Denote  by $\,\B_\rho({\bf y})\,$    the open ball of center ${\bf y}=(y_1,\ldots,y_r)\in\R^r$ and radius~$\,\rho$. 
Fix a smooth, positive,  radial function $\sigma: {\R^r} \to \R$ (only depending  on  $R^2=\Vert{\bf  w}\Vert^2$),
with support in
$\B_1({\bf 0})$,  such that $\sigma'(R^2)<0$ and  $\int_{\R^r} \sigma ({\bf  w})d{\bf  w} =1$.
For $\varepsilon >0$,  define $\Omega_\varepsilon:=\{{\bf  y} \in\Omega \ : \ d_\Omega({\bf  y})>\varepsilon \}$
and   $u_\varepsilon:\Omega_\varepsilon \to \R$ by 
$$\textstyle u_\varepsilon ({\bf  y}):={1\over {\epsilon^r}}\int_{\R^r} u({\bf z}) \sigma({{{\bf z-y}}\over \epsilon})d{\bf z}  =\int_{\R^r} u({\bf  y} +\varepsilon {\bf  w}) \sigma({\bf  w})d{\bf  w}       \,.$$
 The functions $u_\varepsilon$ are clearly smooth. Let $\,\nu: (\R^{>0})^r
 \to \R^{>0}$ be  the stably convex positive function  given by 
$\nu({\bf  y}) := \sum_j \frac{1}{y_j}$.
Define $v_\varepsilon:\Omega_\varepsilon \to \R$ by
$$\textstyle v_\varepsilon ({\bf  y}):=u_\varepsilon ({\bf  y})+\varepsilon \nu({\bf  y})\,.$$


\smallskip
\begin{lem}
\label{APPROSSIMANTI} 
Let $\Omega $ be a convex, $C$-invariant  domain in $\,(\R^{>0})^r$. Then the following facts hold true:
\begin{itemize}
\smallskip
\item[(i)] The domain $\Omega_\varepsilon$ is convex and $C$-invariant for every $\,\varepsilon>0$.
\smallskip
\item[(ii)] The smooth functions $v_\varepsilon$ are stably convex and, for $\varepsilon \searrow 0$, 
they decrease to $u$  uniformly on the  compact subsets of $\Omega$.
\smallskip
\item[(iii)] Let $\delta_\varepsilon:= -\ln 3\varepsilon$. The sublevel set  $\widetilde \Omega_\varepsilon := \{{\bf  y} \in 
 \Omega_\varepsilon \ : \ v_\varepsilon({\bf  y}) < \delta_\varepsilon \}$
is convex and  $C$-invariant.
\smallskip
\item[(iv)] The boundary of $\,\widetilde \Omega_\varepsilon\,$ in $(\R^{>0})^r$ coincides with $\,\{{\bf  y} \in \Omega_\varepsilon\ 
: \ v_\varepsilon({\bf  y}) = \delta_\varepsilon\,\}\,$  and it is smooth.
\smallskip
\item[(v)] As $\,n\in\N\,$ increases, the sequence of convex, $C$-invariant  subdomains with smooth boundary  $\,\widetilde \Omega_{1/n}\,$    exhausts $\, \Omega.$
\end{itemize}

\end{lem}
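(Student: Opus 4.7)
The plan is to work through the five parts in order, exploiting two recurrent principles: Lemma \ref{LOG} converts the convexity of $\Omega$ into convexity of $u := -\ln d_\Omega$, and $C$-invariance of $\Omega$ is equivalent to the monotonicity $d_\Omega({\bf y} + {\bf v}) \ge d_\Omega({\bf y})$ for ${\bf v} \in \overline C$, a property that survives convolution against a radial kernel. For (i), observe that $\Omega_\varepsilon$ is the Minkowski erosion $\{{\bf y} : \B_\varepsilon({\bf y}) \subset \Omega\}$; convexity is inherited from $\Omega$, and if ${\bf y} \in \Omega_\varepsilon$ and ${\bf v} \in C$, then $\B_\varepsilon({\bf y}+{\bf v}) = \B_\varepsilon({\bf y}) + {\bf v} \subset \Omega$ by the $C$-invariance of $\Omega$, so ${\bf y}+{\bf v} \in \Omega_\varepsilon$.

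For (ii), convexity of $u$ yields convexity of the mollification $u_\varepsilon$, and $\nu$ is stably convex on $(\R^{>0})^r$ since its Hessian is diagonal with positive entries $2/y_j^3$; hence $v_\varepsilon = u_\varepsilon + \varepsilon\nu$ is stably convex. For the monotone decrease of $u_\varepsilon$ as $\varepsilon \searrow 0$, the key input is the classical fact that spherical means of a convex (hence subharmonic) function are non-decreasing in the radius; writing $u_\varepsilon$ as a positive weighted integral of such spherical means and using the radial decreasing hypothesis $\sigma'(R^2) < 0$ yields $u_{\varepsilon_1} \le u_{\varepsilon_2}$ whenever $\varepsilon_1 < \varepsilon_2$. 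Combined with $\varepsilon\nu \searrow 0$ and continuity of $u$, this gives $v_\varepsilon \searrow u$ uniformly on compact subsets of $\Omega$.

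For (iii), convexity of $\widetilde \Omega_\varepsilon$ is automatic as a sublevel set of a convex function on a convex set; $C$-invariance reduces to showing that $v_\varepsilon$ is non-increasing along $\overline C$. This holds since $u$ is $\overline C$-decreasing by the second principle above, convolution preserves this monotonicity by translation-invariance, and $\nu$ is strictly decreasing along $\overline C \setminus \{{\bf 0}\}$ since $\partial_{y_j}\nu = -1/y_j^2 < 0$ for each $j$. For (iv), as ${\bf y}$ approaches $\partial \Omega_\varepsilon$ from inside, some ${\bf y} + \varepsilon {\bf w}$ with $|{\bf w}| < 1$ approaches $\partial \Omega$, forcing $u_\varepsilon({\bf y}) \to +\infty$; hence $\overline{\widetilde \Omega_\varepsilon} \cap (\R^{>0})^r \subset \Omega_\varepsilon$ and the boundary is exactly the level set $\{v_\varepsilon = \delta_\varepsilon\}$. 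Smoothness follows because $v_\varepsilon$ is smooth and stably convex, so by strict convexity its gradient vanishes at most at a unique global minimum; for $\varepsilon$ small the minimum value lies strictly below $\delta_\varepsilon = -\ln(3\varepsilon) \to +\infty$, so the level set avoids the critical point and is a smooth hypersurface.

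Finally, (v) assembles (i)--(iv): the nesting $\widetilde \Omega_{1/n} \subset \widetilde \Omega_{1/(n+1)}$ follows from $\Omega_{1/n} \subset \Omega_{1/(n+1)}$, monotone decrease of $v_{1/n}$ from (ii), and $\delta_{1/n} = \ln(n/3)$ being strictly increasing in $n$; exhaustion follows since for any ${\bf y}_0 \in \Omega$ one has $v_{1/n}({\bf y}_0) \to u({\bf y}_0) < +\infty$ while $\delta_{1/n} \to +\infty$, so ${\bf y}_0 \in \widetilde \Omega_{1/n}$ eventually. The main technical step is the monotone-in-$\varepsilon$ decrease of $u_\varepsilon$ in (ii), which genuinely uses the radial decreasing hypothesis on $\sigma$; the remaining pieces are routine bookkeeping combining convexity, translation-invariance of convolution, $C$-invariance of $\Omega$, and the asymptotics $\delta_\varepsilon \to +\infty$.
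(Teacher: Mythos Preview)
Your treatment of parts (i), (ii), (iii), and (v) matches the paper's argument in substance. The serious problem is in part (iv).

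\textbf{Boundary location.} You claim that as ${\bf y}$ approaches $\partial\Omega_\varepsilon$ from inside, $u_\varepsilon({\bf y}) \to +\infty$. This is false. On $\partial\Omega_\varepsilon$ one has $d_\Omega({\bf y})=\varepsilon$, so the ball $\B_\varepsilon({\bf y})$ only \emph{touches} $\partial\Omega$ at a single point on its closure; for $|{\bf w}|<1$ the integrand $-\ln d_\Omega({\bf y}+\varepsilon{\bf w})$ has at worst a logarithmic singularity near one boundary direction, and since the smooth kernel $\sigma$ vanishes at $|{\bf w}|=1$ the integral stays finite. (Already in the rank-one example $\Omega=(0,\infty)$, $u(y)=-\ln y$, one checks directly that $u_\varepsilon(\varepsilon)$ is finite.) The paper's argument is instead \emph{quantitative} and explains the otherwise mysterious choice $\delta_\varepsilon=-\ln 3\varepsilon$: for ${\bf y}$ near $\partial\Omega_\varepsilon$ (say $d_\Omega({\bf y})<2\varepsilon$) the Lipschitz property of $d_\Omega$ gives $d_\Omega({\bf y}+\varepsilon{\bf w})<3\varepsilon$ for every $|{\bf w}|<1$, hence $u_\varepsilon({\bf y})>-\ln 3\varepsilon=\delta_\varepsilon$ and therefore $v_\varepsilon({\bf y})>\delta_\varepsilon$. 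This is what forces $\overline{\widetilde\Omega_\varepsilon}\subset\Omega_\varepsilon$.

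\textbf{Smoothness.} Your argument via ``the gradient of a stably convex function vanishes at most at its unique minimum, and for $\varepsilon$ small that minimum lies below $\delta_\varepsilon$'' is doubly problematic: $v_\varepsilon$ need not attain a minimum on the unbounded set $\Omega_\varepsilon$, and the statement must hold for \emph{every} $\varepsilon>0$, not just small ones. The clean route---which you actually set up in (iii) but do not use here---is that $v_\varepsilon$ is \emph{strictly} decreasing along any direction ${\bf v}\in C$ (since $u_\varepsilon$ is $\overline C$-decreasing and $\nu$ is strictly so). Thus the directional derivative of $v_\varepsilon$ along, say, ${\bf v}=(1,\dots,1)$ (or $(1,\dots,1,0)$ in the tube case) is strictly negative at every point of $\Omega_\varepsilon$, so $dv_\varepsilon$ never vanishes and every level set is a smooth hypersurface. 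This is exactly what the paper does.
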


 \medskip
 \begin{proof}
 (i) Let $\,{\bf  y}\,$ and $\,{\bf  y} +{\bf  v}\,$ be elements  of $\,\Omega_\varepsilon$. Then $\,\B_ \varepsilon({\bf  y})\,$ and $\,\B_ \varepsilon({\bf  y+ v})\,$ 
 are contained in $\,\Omega\,$ and, by the convexity of $\Omega$,  the same is true for  $\,\B_ \varepsilon({\bf  y+ tv})$,    for every
 $\,t\in [0,1]$. This shows that $\Omega_\varepsilon$ is convex. Moreover, as $\Omega$ is $C$-invariant, if 
 $\,\B_ \varepsilon({\bf  y})\,$ is contained in $\Omega$ and ${\bf  v}$ is an element of  the cone $C$, then also the open ball
 $\,\B_ \varepsilon({\bf  y+ v})\,$ 
 is  contained in $\Omega$. This shows that $\Omega_\varepsilon$ is $C$-invariant.
 
  (ii) As $u$ is convex, for ${\bf y}$, ${\bf y}+{\bf v}\in \Omega$ and 
   $\,t\in [0,1]$, one has 
$$u_\varepsilon ({\bf  y}+t{\bf  v}):=\int_{\R^r} u({\bf  y}+t{\bf  v} +\varepsilon {\bf  w}) \sigma({\bf  w})d{\bf  w}$$
$$ \leq
\int_{\R^r} \big( (1-t)u({\bf  y} +\varepsilon {\bf  w})+tu({\bf  y} +\varepsilon {\bf  w}+{\bf  v})\big ) \sigma({\bf  w})d{\bf  w} = 
 (1-t)u_\varepsilon ({\bf  y}) + tu_\varepsilon ({\bf  y}+{\bf  v})\,,$$
showing that the smooth function $u_\varepsilon$ is convex. 
Since $\nu$ is smooth and stably convex, it follows that 
$v_\varepsilon:= u_\varepsilon+\varepsilon \nu$ is smooth and stably convex. 
Moreover, as  convexity  implies  subharmonicity,  then   the last part
of  statement (ii) follows from \cite{Hor94}, Thm 3.2.3(ii), p.143.

  (iii) Since the function $v_\varepsilon$ is convex,  then the domain $\widetilde \Omega_\varepsilon$ is convex.
 In order to show that  $\widetilde \Omega_\varepsilon$ is $C$-invariant, we prove that 
 \begin{equation}
\label{STRICTDEC}
v_\varepsilon({\bf  y+v})< v_\varepsilon({\bf  y})\,,
\end{equation}
 for every ${\bf  y}\in \Omega_\varepsilon$ and ${\bf v}\in C$.
Since $\Omega$ is $C$-invariant,   if for some ${\bf y}\in \Omega$ the ball $\B_r({\bf y})$ is contained in $\Omega$,   then also   the ball  $\B_r({\bf y}+{\bf v})$ is contained in $ \Omega$, for all~${\bf v}\in C$.  It follows that $d_\Omega({\bf y})\le  d_\Omega({\bf y}+{\bf v})$ and consequently  $u({\bf  y}+{\bf  v} +\varepsilon {\bf  w}) \le  u({\bf  y} +\varepsilon {\bf  w}) $,  for all~${\bf v}\in C$. and~${\bf  w} \in \B_1({\bf  0})$.
One deduces that  
   $$u_\varepsilon ({\bf  y}+{\bf  v})=\int_{\R^r} u({\bf  y}+{\bf  v} +\varepsilon {\bf  w}) \sigma({\bf  w})d{\bf  w} \leq
   \int_{\R^r} u({\bf  y} +\varepsilon {\bf  w}) \sigma({\bf  w})d{\bf  w}=
u_\varepsilon ({\bf  y})\,,$$
for every ${\bf  y}\in \Omega_\varepsilon$, ${\bf  v}\in C$.  
Since    $\nu({\bf  y+v})< \nu({\bf  y})$, one concludes   that $v_\varepsilon({\bf  y+v})< v_\varepsilon({\bf  y})$, and 
 $\widetilde \Omega_\varepsilon$ is $C$-invariant, as desired.
      
    (iv) For $\,{\bf y}\,$ close to $\partial \Omega_\varepsilon=
    \,\{{\bf  z} \in \Omega
: \ d_\Omega({\bf  z}) = \varepsilon\,\}, $ a   rough extimate shows that   
$d_\Omega({\bf  y }+\varepsilon {\bf  w})<3\varepsilon$, for every ${\bf  w} \in \B_1({\bf  0})$.
Therefore $v_\varepsilon ({\bf  y})>u_\varepsilon ({\bf  y})> -\ln 3\varepsilon$, implying that the boundary of $\, \widetilde \Omega_\varepsilon$ is contained in $\Omega_\varepsilon $ and it is given by 
 $\,\partial\widetilde \Omega_\varepsilon=\{{\bf  y} \in \Omega_\varepsilon \ 
: \ v_\varepsilon({\bf  y}) = \delta_\varepsilon\,\}$.
Concerning the smoothness of $\,\partial\widetilde \Omega_\varepsilon$,  the  rank one case is trivial.  So assume $r>1$.

 Let $ {\widehat {\bf y}}  \in \partial \widetilde \Omega_\varepsilon$. 
Set ${\bf  v}:=(1,\ldots ,1)$, in the non-tube case, and  ${\bf  v}:=(1,\ldots ,1,0)$, in the tube case.  Since ${\bf v}$ lies in the cone $C$, the inequality (\ref{STRICTDEC}) implies that for $\gamma$
small enough the real 
 function $g:(-\gamma, \gamma) \to \R$,  defined by $g(t):=v_\varepsilon( {\widehat{\bf y}}+t{\bf  v})$,
 is strictly decreasing. By the stable convexity of $v_\varepsilon$, it is also stricltly convex and
 $g'(0)<0$.   
As $g'(0)$ is a directional  derivative of  $v_\varepsilon$ in $ {\widehat {\bf y}}$,
 the differential  $dv_\varepsilon|_{\widehat {\bf y}}$ does not vanish and  the boundary of 
$\widetilde \Omega_{\varepsilon}$ is smooth.

    (v) For  $\,m>n\,$,   the  inclusion $\Omega_{1/n} \subset  \Omega_{1/m}\,$  and the inequality 
    $\, v_{1/n} > v_{1/m}$  imply  that $\widetilde \Omega_{1/n} \subset  \widetilde \Omega_{1/m}$. 
    This concludes  the proof of the lemma.
 \end{proof}

 \nmedskip
 {\bf Proof of Theorem \ref{CASOLISCIO1}: the general case}.
 Let $D$ be an arbitrary  Stein, $\,N$-invariant domain in $G/K$. By Remark \ref{CONVEXITY}, the base $\Omega$ of the associated tube domain   is necessarily convex.   Assume by contradiction that $\Omega$ is not $C$-invariant (cf. Def.\,\ref{CONEINVARIANT} and (\ref{CONE})),  i.e.\,there exist $\,{\bf y}\in \Omega\,$
 and $\,{\bf z} \in  ({\bf y}+C)\cap  \partial \Omega$. By the convexity of $\Omega$,  the open segment from $\bf y$ to ${\bf  z} $ is contained in $\Omega$.  Moreover,   the vector  ${\bf v} = {\bf  z}-{\bf y}  $  lies in the cone $C$ and  points to the exterior of $\Omega$.  
Let $\B_\varepsilon({\bf  y})$ be a  relatively compact ball   in $\Omega$ and define
$$t_{\max} := \max \{ \, t >0 \ : \ \B_\varepsilon({\bf  y} + t {\bf v}) \subset \Omega \, \}\,.$$
Then there exists $ {\bf  w} \in \partial \B_\varepsilon({\bf y} + t_{\max} {\bf v})\cap \partial \Omega$, and 
by construction  
$$\langle{\bf  w}-({\bf  y} + t_{max} {\bf v}),{\bf  v}\rangle\  >\ 0.$$ 
This  implies that   the outer normal $\,{\bf n}:={\bf  w}-({\bf  y} + t {\bf v})\,$ to $\,\partial \B_\varepsilon({\bf  y} + t_{\max} {\bf v})\,$ satisfies $n_j>0$, for some $j\in\{1,\ldots,r\}$ in the non-tube case (resp. $n_j>0$,  for some $j\in\{1,\ldots,r-1\}$, in the  tube case). 
From the result of the theorem in the smooth case,   it follows that the $\,N$-invariant subdomain  $\,N  \exp(L(\B_\varepsilon({\bf  y} + t_{\max} {\bf v})))\cdot eK$, with smooth boundary, is not  Levi pseudoconvex 
in $ \,\exp (L(  {\bf  w}))K$. Then Lemma \ref{INCLUDED} implies that $\,D\,$ is not Stein, contradicting the assumption.

Conversely, assume that $\,\Omega\,$ is  convex and $C$-invariant.
By Lemma \ref{APPROSSIMANTI},    the domain $D$ can be realised as the increasing union 
 of $N$-invariant domains $\,D_{1/n}:=N\exp(L (\widetilde \Omega_{1/n}))\cdot eK$, where the open  sets 
 $\,\widetilde \Omega_{1/n}\subset  \R^r  \,$ are convex, $C$-invariant and have smooth boundary.
By  the result of the theorem  in the smooth case,  the domains $D_{1/n}$ are Stein and so is
their increasing union $\,D$. This completes  the proof of the theorem.
\qed

 \bigskip

We conclude this section with a univalence result for  Stein, $\,N$-equivariant, Riemann domains over $G/K$.

 \smallskip
\begin{prop}
\label{SCHLICHT}
Any  holomorphically separable, $\,N$-equivariant, Riemann domain over $\,G/K\,$ is univalent. 
\end{prop}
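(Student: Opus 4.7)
The plan is to reduce univalence of $(X,p)$ over $G/K$ to univalence of an associated $R$-equivariant Riemann domain over the embedded polydisc $\HH^r \cong R\exp(\a)\cdot eK$, then invoke the classical Bochner tube theorem. The reduction relies on the polydisc theorem (Proposition \ref{FACT1}), according to which every $N$-orbit in $G/K$ meets $Y:=R\exp(\a)\cdot eK$ in a single $R$-orbit.

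As a preliminary step, I would verify that the $N$-action on $X$ is free and that each $N$-orbit in $X$ projects diffeomorphically onto an $N$-orbit in $G/K$. Freeness on $G/K$ lifts along the local biholomorphism $p$, and since $N$ is connected and simply connected each orbit map $N \to Nx \subset X$ is a diffeomorphism which corresponds, via $p$, to the orbit map $N \to Np(x) \subset G/K$. Consequently $p$ is injective on every $N$-orbit of $X$, so univalence of $p$ is equivalent to distinct $N$-orbits of $X$ projecting to distinct $N$-orbits of $G/K$.

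Next, set $Z:=p^{-1}(Y)$. Since $Y$ is a closed complex submanifold of $G/K$ and $p$ is locally biholomorphic, $Z$ is a closed complex submanifold of $X$, and it is $R$-invariant because $R \subset N$ and $Y$ is $R$-stable. The restriction $p|_Z\colon Z\to Y$ is therefore an $R$-equivariant Riemann domain, holomorphically separable by restriction of global holomorphic functions from $X$. A standard orbit-theoretic argument, exploiting that each $N$-orbit of $X$ meets $Z$ in a single connected $R$-orbit, combined with the connectedness of $X$, ensures that $Z$ is also connected. The reduction is then immediate: given $x_1,x_2\in X$ with $p(x_1)=p(x_2)=q$, pick $n\in N$ with $nq\in Y$; then $nx_1,nx_2\in Z$ share the same image under $p|_Z$, so injectivity of $p|_Z$ together with freeness of the $N$-action forces $x_1=x_2$.

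Finally, composing $p|_Z$ with the inclusion $\HH^r \hookrightarrow \C^r$ presents $Z$ as an $\R^r$-equivariant, holomorphically separable Riemann domain over $\C^r$. By the classical Bochner tube theorem (cf.~\cite{Gun90}), the envelope of holomorphy of such a Riemann domain is univalent in $\C^r$; holomorphic separability of $Z$ implies that $Z$ embeds into its envelope, hence into $\C^r$, and since $p(Z)\subset\HH^r$ this yields the desired univalence of $p|_Z$. The main obstacle will be invoking Bochner's tube theorem in its Riemann-domain formulation rather than for mere subdomains of $\C^n$; once that tool is in hand, the rest is the orbit-theoretic bookkeeping outlined above.
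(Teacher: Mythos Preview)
Your approach is essentially the same as the paper's: reduce to the $R$-equivariant Riemann domain $p^{-1}(Y)$ over the embedded polydisc $Y\cong\HH^r$, then invoke a univalence result for $\R^r$-equivariant Riemann domains over tubes. The paper first passes to the Stein envelope via Rossi \cite{Ros63} and establishes the orbit structure through the diffeomorphism $N\times\widetilde\Sigma\to Z$ (with $\widetilde\Sigma=p^{-1}(\exp(\mathcal D)\cdot eK)$), but this is just a reorganization of your freeness/orbit-bijection bookkeeping; the endpoint tool you call ``Bochner's tube theorem in its Riemann-domain formulation'' is exactly the result of Coeur\'e--Loeb \cite{CoLo86} that the paper invokes.
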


\begin{proof} Let $\,Z\,$ be a  holomorphically separable,  $\,N$-equivariant, Riemann domain  over $\,G/K\,$. 
 By \cite{Ros63},  $\,Z\,$ admits an   holomorphic,   $\,N$-equivariant  open embedding  into its envelope of holomorphy, which is a Stein $\,N$-equivariant, Riemann domain over $\,G/K\,$.
  Hence, without loss of generality, we may assume that
 $\,Z\,$ is Stein.

Denote by $\,\pi: Z \to G/K\,$ the $\,N$-equivariant projection
and let $\,\pi(Z)= N \exp (L(\Omega))\cdot eK$ be the image of $Z$ under $\pi$. Define 
$\,\Sigma :=\exp (L(\Omega))\cdot eK\,$ and
$\widetilde \Sigma:=\pi^{-1} (\Sigma)$. Note that  $\,\widetilde \Sigma\,$ is a closed submanifold of
$\,Z$.  

\nmedskip
{\bf Claim.} {\sl The map
$\,\widetilde {\phi}:N \times \widetilde \Sigma \to Z$, given by $(n,x)  \to n\cdot x$, is a diffeomorphism. }

\nmedskip
{\it Proof of the  claim.} Since $\,\Sigma= \pi(Z)\cap \exp(\a)\cdot eK\,$ is a closed real submanifold of $\,\pi(Z)\,$ and $\,\pi\,$ is a local biholomorphism, 
the restriction $\,\pi|_{\widetilde \Sigma}: \widetilde \Sigma \to
\Sigma\,$ is a local diffeomorphism. Moreover one has the commutative diagram 
\begin{displaymath}
\xymatrix@R=1.8em{
       \ar[d]_{^{Id \times (\pi|_{\widetilde \Sigma})} } N\times \widetilde\Sigma \ar[r]^{ \widetilde\phi }     &    Z \ar[d]^\pi  \\
 N\times  \Sigma\ar[r]^{\phi}  &    N\exp L(\Omega) \cdot eK  \\
  }
\end{displaymath}
where the maps $\,Id \times (\pi|_{\widetilde \Sigma})$, ${\phi}$ and $\pi$ are local diffeomorphisms. Hence so is the map 
$\widetilde {{\phi}}$.

To prove that $\widetilde {{\phi}}$ is surjective, 
let $z\in Z$ and note that  $\pi(z) 
=n\exp(L({\bf y}))K$, for some $n\in N$ and ${\bf y} \in \Omega$. Then the element $w:=n^{-1}\cdot z \in \widetilde  \Sigma$ satisfies  $n\cdot w=z$, implying  the surjectivity of $\widetilde {{\phi}}$.

To prove that $\widetilde {{\phi}}$ is injective, assume  that  $\,n\cdot w=n'\cdot w'$, for some  $n,\, n'\in N\,$ and $\,w,\,w' \in \widetilde \Sigma$. From the equivariance of $\,\pi\,$  it follows that $\,n\cdot \pi(w)=n'\cdot \pi(w')$.
As $\,\phi\,$ is bijective,
it follows that $n=n'$  and $\pi(w)=\pi(w')$. Thus $w=(n^{-1}n') \cdot w'=w'$, implying the injectivity of $\widetilde {{\phi}}$ and concluding the proof of the claim.

 \medskip
Now, in order to prove the univalence of $\,\pi$, it is sufficient to show that the restriction
$\,\pi|_{ \widetilde \Sigma}: \widetilde  \Sigma \to \Sigma\,$ of $\,\pi\,$ to $\, \widetilde  \Sigma\,$ is injective.
For this, consider the closed complex submanifold $\,R \cdot \widetilde \Sigma=
\pi^{-1}(R\cdot \Sigma)$ of $\,Z$. As $\,Z\,$ is Stein, so is $\,R \cdot \widetilde \Sigma$.
Hence the restriction
 $\pi|_{R\cdot \widetilde \Sigma}:R\cdot \widetilde \Sigma \to R\cdot \Sigma\,$ 
 defines  an $\,R$-equivariant,  Stein, Riemann domain over 
the Stein tube   $\,R\cdot \Sigma$. As $\,R\,$ is isomorphic to $\,\R^r$,  from  \cite{CoLo86}
it follows that $\,\pi|_{R\cdot \widetilde \Sigma}\,$ is injective. Hence the  same is  true for  $\,\pi|_ {\widetilde \Sigma}\,$
and $\,\pi$, as wished.
 \end{proof}

\begin{cor} \label{ENVELOPE}
The envelope of holomorphy $\widehat D$ of an
$\,N$-invariant domain $\,D\,$ in $\,G/K\,$ is the smallest Stein domain in $G/K$ containing $D$.  More precisely, $\widehat D$ is the tube domain with base $\widehat \Omega$,  the convex $C$-invariant hull of $\,\Omega$.
\end{cor}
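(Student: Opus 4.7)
My plan is to assemble the statement from Proposition~\ref{SCHLICHT}, Theorem~\ref{CASOLISCIO1} and the universal property of the envelope of holomorphy. Let $\pi\colon\widehat D\to G/K$ denote the envelope of holomorphy of $D$; by \cite{Ros63} this is a holomorphically separable (in fact Stein) Riemann domain over $G/K$. Since envelopes are functorial with respect to biholomorphisms of the base, each element $n\in N$, acting on $D$ as a biholomorphism, lifts uniquely to a biholomorphism of $\widehat D$ commuting with $\pi$. This yields an $N$-action on $\widehat D$ compatible with the projection, making $\widehat D$ into an $N$-equivariant Riemann domain over $G/K$. Proposition~\ref{SCHLICHT} then forces $\pi$ to be injective, so $\widehat D$ may be identified with an $N$-invariant Stein subdomain of $G/K$ containing $D$.

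Consequently, by Corollary~\ref{ASSOCIATEDTUBE} the domain $\widehat D$ corresponds to a tube with base $\widehat\Omega\subset(\R^{>0})^r$ containing $\Omega$, and by Theorem~\ref{CASOLISCIO1} this base is automatically convex and $C$-invariant. It remains to show that $\widehat D$ is the smallest Stein subdomain of $G/K$ containing $D$, whence $\widehat\Omega$ is the smallest convex, $C$-invariant open set containing $\Omega$. For the first claim, let $D'\subset G/K$ be any Stein subdomain containing $D$. The universal property of the envelope of holomorphy supplies a holomorphic map $\psi\colon\widehat D\to D'$ extending the inclusion $D\hookrightarrow D'$ and commuting with the projections to $G/K$; since $\pi$ is an open embedding and $D'\hookrightarrow G/K$ is injective, $\psi$ is nothing but the inclusion $\widehat D\subset D'$.

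Applying this minimality to an arbitrary $N$-invariant Stein domain $D'\supset D$, whose associated base $\Omega'$ is convex and $C$-invariant by Theorem~\ref{CASOLISCIO1}, we obtain $\widehat\Omega\subset\Omega'$. Hence $\widehat\Omega$ coincides with the intersection of all convex, $C$-invariant open sets containing $\Omega$, i.e.\ with the convex $C$-invariant hull of $\Omega$, as claimed. The only step that requires some care is the functoriality of the envelope and the extension of the $N$-action to $\widehat D$, which is needed in order to invoke Proposition~\ref{SCHLICHT}; the remainder of the argument is a formal consequence of Theorem~\ref{CASOLISCIO1} and the universal property of envelopes.
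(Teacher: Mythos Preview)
Your argument is correct and is precisely the intended derivation: the paper states Corollary~\ref{ENVELOPE} without proof, presenting it as an immediate consequence of Proposition~\ref{SCHLICHT} together with Theorem~\ref{CASOLISCIO1}, and your write-up spells out exactly this implication (functorial lift of the $N$-action to $\widehat D$, univalence via Proposition~\ref{SCHLICHT}, then minimality from the universal property and the characterization of Stein $N$-invariant domains). There is nothing to add.
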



\bigskip
\section{$\,N$-invariant psh functions vs. cvxdec functions}
\label{PSHEXPCVX}

\medskip

Let $\,D\,$ be a Stein, $\,N$-invariant domain in a
 non-compact, irreducible  Hermitian symmetric space  $\,G/K\,$ of rank $\,r$ and 
let $\,\Omega\,$ be the base of the associated $\,r$-dimensional tube domain.
Then $\,\Omega\,$ is a convex, $\,C$-invariant domain in $\,(\R^{>0})^r\,$ (Thm.\,\ref{CASOLISCIO1}).
From Proposition \ref{PSHPOLIDISC2}  it follows that 
there is a one-to-one correspondence between the class  of smooth 
$\,N$-invariant plurisubharmonic functions on $\,D\,$ and the class  of smooth convex functions on $\,\Omega\,$
satisfying an additional monotonicity condition  (cf. Rem.\,\ref{FACT23} and Rem.\,\ref{DECGRADIENT}).
In this section we obtain an analogous result in the non-smooth 
context.

\mn
Let $\,\overline C\,$ be the closure of the cone defined in (\ref{CONE}).

  \medskip
  \begin{defi} \label{DECREASING} 
A function $\,\widehat f:\Omega \to \R\,$ is (strictly) $\,\overline C$-decreasing if for every $\,{\bf y} \in \Omega\,$ and
$\,{\bf v} \in \overline C\setminus\{{\bf 0} \} \,$ the restriction of $\,\widehat f\,$ to the half-line $\,\{ {\bf y} + t {\bf v } \ : \ t \geq 0 \}\,$ is (strictly) decreasing.
\end{defi}

\mn
\begin{remark}
 \label{DECGRADIENT}
 {\rm (i) A smooth function $\,\widehat f:\Omega \to \R\,$ is $\,\overline C$-decreasing if and only if $\,{\rm grad}f({\bf y}) \cdot {\bf v} \leq 0\,$
 for every $\,{\bf y} \in \Omega\,$ and $\,{\bf v} \in \overline C\setminus\{{\bf 0} \}$.}
 
 \sn
  (ii) {\rm A smooth, {\it stably convex} (cf. Def. \ref{STABLYCVX}) function $\,\widehat f:\Omega \to \R\,$ is $\,\overline C$-decreasing if and only if $\,{\rm grad}f({\bf y}) \cdot {\bf v} < 0 $, 
 for every $\,{\bf y} \in \Omega\,$ and $\,{\bf v} \in \overline C\setminus\{{\bf 0} \}$. This follows from the fact that the directional  derivatives $\,{\rm grad}f({\bf y}) \cdot {\bf v}\,$  of a stably convex, $\,\overline C$-decreasing function $\,\widehat f$  never vanish. In particular
 $\,\widehat f\,$ is automatically strictly $\,\overline C$-decreasing.}
\end{remark}

\medskip
\noindent
In view of the above observations, we define the following classes of functions:

 \medskip
- $ConvDec^{\infty,+}(\Omega)$: smooth, stably convex, $\,\overline C$-decreasing
functions  on $\,\Omega$,

 \smallskip
- $ConvDec^{\infty}(\Omega)$: smooth, convex,
$\, \overline C$-decreasing
functions  on $\,\Omega$,

\smallskip
- $Psh^{{\infty},+}(D)^N$: smooth,  $\,N$-invariant,  strictly plurisubharmonic functions
on~$\,D$,

\smallskip
- $Psh^{{\infty}}(D)^N$: smooth,  $\,N$-invariant,  plurisubharmonic functions
 on~$\,D$.

\mn
Proposition \ref{PSHPOLIDISC2} established a one-to-one correspondence between $ConvDec^{\infty,+}(\Omega)$
and   $Psh^{{\infty},+}(D)^N$, as well as between $ConvDec^{\infty}(\Omega)$ and $Psh^{{\infty}}(D)^N$.
The next  goal is to extend  such correspondences beyond the smooth  context.  

Let $\,\widehat h:\Omega \to \R\,$ be the smooth, stably convex,  strictly $\overline C$-decreasing function  
\begin{equation}\label{H-HAT}
\textstyle \widehat h({\bf y}):=\sum_j \frac{1}{y_j}, \qquad \hbox{for ${\bf y}=(y_1,\ldots,y_r)\in \Omega$}, 
\end{equation}
and let   $\,h\,$ be  the $\,N$-invariant strictly plurisubharmonic function on $D$ associated to  $\,\widehat h$.

\begin{defi} A  function $\,\widehat f\,\colon \Omega\to\R$ is    {\it stably convex and  $\,\overline C$-decreasing }  
if 
every point in $\,\Omega\,$ admits 
a convex $\,\overline C$-invariant neighborhood $W$ and $\,\varepsilon >0$ such that $\widehat f-\varepsilon \widehat h$ is a 
 convex, $\, \overline C$-decreasing function on $W$.  
\end{defi}

\smallskip
\begin{defi}  An $\,N$-invariant function $\,f\colon D\to \R\,$ is    {\it  strictly plurisubharmonic} if 
 every point  in  $\, D$ admits 
an $\,N$-invariant neighborhood $\,U$ and $\,\varepsilon >0$\, such that $\,f-\varepsilon h\,$ is an $N$-invariant plurisubharmonic function on~$U$ (see also \,\cite{Gun90}, Vol.\,1, Def.\,1, p.\,118).  \end{defi}
\sn
 
In the smooth context the above  notions coincide  with the ones introduced earlier.
Denote by 

\smallskip
- $ConvDec^+(\Omega)$:  
stably convex and $\,\overline C$-decreasing functions on $\Omega$;

\smallskip
-  $ConvDec(\Omega)$:  convex,  $\,\overline C$-decreasing functions on $\Omega$;

 \smallskip
- $Psh^+(D)^N$: strictly plurisubharmonic, $\,N$-invariant functions on $\,D$;

\smallskip
- $Psh(D)^N$: plurisubharmonic, $\,N$-invariant functions on $\,D$.

\REM{
\medskip
\begin{lem}
\label{MOLLIFICA}
 If $\,\widehat f\,\in \,ConvDec(\Omega)$,  then for all $\varepsilon>0$, the functions  $\widehat f_\varepsilon $ are in  $\,ConvDec^{\infty,+}(\Omega)$.
 As a conseguence,   the corresponding  functions $\,f_\varepsilon\,$ on $D$ are in  $ \,Psh^{\infty,+}(D)^N\,$ and the limit function 
  $\,f\,$ is in  $\,Psh(D)^N$.
\end{lem}
\smallskip
\begin{proof}
Arguments analogous  to those used  in Lemma \ref{APPROSSIMANTI} show that the functions 
$\,\widehat f_\varepsilon\,$ are    in the class $\,ConvDec^{\infty,+}(\Omega_\varepsilon)$. 
Consequently, $\grad \widehat f_\varepsilon \cdot C <0$.
By Proposition \ref{PSHPOLIDISC2}, 
the corresponding functions $\,f_\varepsilon\,$ are in  $\,Psh^{\infty,+}(D)^N\,$ and consequently  
  $\,f\,$ is in  $\,Psh(D)^N$, as claimed.
 \end{proof}}

 \bn
The next theorem summarizes our results.
 
\medskip
\begin{theorem}
\label{BIJECTIVEK} Let $D$ be a Stein $\,N$-invariant domain in a non-compact, irreducible 
Hermitian symmetric space $G/K$ of rank $r$.
The map $f\to \widehat f$ is a bijection between
the following classes of functions 
\begin{itemize}
\item [(i)] $\,Psh^{{\infty},+}(D)^N\,$ and $\ \ \,ConvDec^{{\infty},+}(\Omega)$,
\item [(ii)] $\,Psh^{\infty}(D)^N\,\ \ $ and $\ \ \,ConvDec^{\infty}(\Omega)$,
\item [(iii)] $\,Psh\,(D)^N\,\ \ \ \,$ and $\, \ \  ConvDec(\Omega)$,
\item [(iv)]  $\,Psh^{+}(D)^N\,\ \ $ and $\ \ \,ConvDec^+(\Omega)$.
\end{itemize}

\noindent
In particular, $\,N$-invariant plurisubharmonic functions on $D$ are necessarily continuous.
\end{theorem}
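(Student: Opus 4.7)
The plan is to dispatch the smooth cases (i) and (ii) by direct appeal to Proposition \ref{PSHPOLIDISC2}, then bootstrap to the general cases (iii) and (iv) via a mollification argument modeled on Lemma \ref{APPROSSIMANTI}, and finally to deduce the continuity statement from the classical continuity of convex functions. For (i) and (ii), recall from Section 4 that $f\mapsto \widehat f$ is a bijection between $C^\infty(D)^N$ and $C^\infty(\Omega)$; Proposition \ref{PSHPOLIDISC2} together with Remark \ref{DECGRADIENT} then characterizes (strict) plurisubharmonicity of a smooth $N$-invariant $f$ at $aK$ by (stable) convexity of $\widehat f$ paired with the $\overline C$-decreasing gradient condition, which is exactly the content of (i) and (ii).

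For the converse direction of (iii), assume $\widehat f\in ConvDec(\Omega)$. I would mollify using the convolutions $\widehat f_\varepsilon$ on $\Omega_\varepsilon$ introduced before Lemma \ref{APPROSSIMANTI}. The proof of that lemma shows that the $C$-invariance of $\Omega$ together with the pointwise $\overline C$-decreasing property of $\widehat f$ forces $\widehat f_\varepsilon$ to be smooth, convex, and $\overline C$-decreasing on $\Omega_\varepsilon$; adding the perturbation $\varepsilon\widehat h$ from (\ref{H-HAT}) upgrades this to stable convexity together with strict decrease via Remark \ref{DECGRADIENT}(ii). Case (i) then produces smooth strictly plurisubharmonic $N$-invariant functions $f_\varepsilon+\varepsilon h$ on $D_\varepsilon:=N\exp(L(\Omega_\varepsilon))\cdot eK$, whose decreasing limit is $f\in Psh(D)^N$, yielding (iii). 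The strict version (iv) is handled via the local definition: on each $N$-invariant neighborhood $U$ for which $f-\varepsilon h\in Psh(U)^N$, apply (iii) to $f-\varepsilon h$ and rearrange.

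For the forward direction of (iii), assume $f\in Psh(D)^N$. Its restriction to the closed complex submanifold $R\exp(\a)\cdot eK\cong \HH^r$ (Prop.\ \ref{FACT1}) is an $\R^r$-invariant plurisubharmonic function on a tube domain, so the classical Bochner correspondence forces its base function, which is precisely $\widehat f$ by the definition (\ref{EFFEHAT}), to be convex. For the $\overline C$-decreasing property, I would produce a decreasing sequence of smooth $N$-invariant plurisubharmonic approximants $f_k\searrow f$ using the standard convolution procedure on the simply transitive solvable group $S=NA$ (which respects $N$-invariance), then apply (ii) to each $f_k$ to conclude that $\widehat{f_k}$ is $\overline C$-decreasing; this property is inherited by the pointwise decreasing limit $\widehat f$. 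The forward direction of (iv) then follows by the same scheme after subtracting $\varepsilon h$ on each $N$-invariant neighborhood. The main obstacle is to arrange the smooth approximation so that $N$-invariance and plurisubharmonicity are simultaneously preserved and correspond via $f\leftrightarrow \widehat f$ to $\overline C$-decreasing base functions; this should be achieved by combining the mollification on $\Omega$ of Lemma \ref{APPROSSIMANTI} with the $N$-equivariant trivialization $D\cong N\times \exp(L(\Omega))\cdot eK$.

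Finally, the continuity of any $f\in Psh(D)^N$ is immediate: $\widehat f$ is convex on the open convex set $\Omega\subset(\R^{>0})^r$, hence continuous on its interior by a classical result, and the $N$-equivariant projection $D\to \Omega$ is a smooth submersion with $N$-invariant fibers, so the pullback preserves continuity.
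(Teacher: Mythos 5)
Your treatment of (i)--(ii), of the converse half of (iii) (mollify on $\Omega_\varepsilon$, add $\varepsilon\widehat h$, invoke the smooth case, pass to the limit), of (iv), and of the continuity statement coincides with the paper's proof. The one place where you genuinely diverge is the forward half of (iii): to show that $\widehat f$ is $\overline C$-decreasing when $f\in Psh(D)^N$, the paper argues by contradiction on sublevel sets -- if $\widehat f$ were not $\overline C$-decreasing, some sublevel set $\{\widehat f<s\}$ would fail to be $\overline C$-invariant, so by Theorem \ref{CASOLISCIO1} the $N$-invariant domain $\{f<s\}$ would not be Stein, contradicting the Steinness of sublevel sets of plurisubharmonic functions on a Stein manifold (the reference to Carmignani). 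You instead smooth $f$ by convolution over the simply transitive group $S=NA$ and apply the smooth case to the approximants. Your route can be made to work (normality of $N$ in $S$ gives $N$-invariance of the convolutions, and simple transitivity gives smoothness), but two points need care that the paper's argument sidesteps entirely: the group convolution does not automatically produce a \emph{decreasing} sequence, and pointwise convergence of $\widehat f_k$ to $\widehat f$ at every point is what you actually need to pass the monotonicity along half-lines to the limit. Both are rescued by observing that convexity of $\widehat f$ -- which you establish first, exactly as the paper does, by restricting to the embedded tube -- already forces $\widehat f$, hence $f=\widehat f\circ\pi$, to be continuous, so the convolutions converge locally uniformly; you should make this explicit, since otherwise only $\limsup$ control is available for an upper semicontinuous $f$. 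In exchange, your argument is more self-contained analytically, while the paper's leverages Theorem \ref{CASOLISCIO1} a second time and avoids all smoothing technicalities on the curved space. A small notational slip: in the converse direction you write the approximants as $f_\varepsilon+\varepsilon h$ although $\widehat f_\varepsilon$ as defined already contains the $\varepsilon\widehat h$ term; this does not affect the argument.
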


\begin{proof} (i) and (ii) follow from Proposition\,\ref{PSHPOLIDISC2} and Remark \ref{DECGRADIENT}.\pn
(iii) Let $\,f \,$ be a function in  $\,Psh\,(D)^N$.  
Since the restriction of  $\,f\,$ to the embedded $r$-dimensional Stein tube domain 
 $\,R \exp(L(\Omega)) \cdot eK \cong \R^r \times i\Omega\,$ (cf.\,Cor.\,\ref{ASSOCIATEDTUBE}) is plurisubharmonic and $\,R$-invariant,
then $\,\widehat f$ is necessarily  convex. Assume by contradiction that $\,\widehat f\,$ is not $\,\overline C$-decreasing. 
  Then there exists  $\,s\in \R\,$ such that the sublevel set 
$\,\{\widehat f<s\}\,$ is not $\,\overline C$-invariant. By Theorem \ref{CASOLISCIO1}, the corresponding $\,N$-invariant domain 
$\,\{ f<s\}\,$ is not Stein. Since $\,G/K\,$ is biholomorphic to a Stein domain in $\,\C^n\,$ and 
$\,f\,$ is plurisubharmonic, this contradicts \cite{Car73}, Thm.\,B, p.\,419. Hence $\,\widehat f\,$ belongs to $\,ConvDec(\Omega)$, as claimed.

In order to  prove the converse,   as in the previous section, 
 for  $\,\varepsilon>0$   consider the convex $C$-invariant set 
$\,\Omega_\varepsilon:=\{{\bf  y} \in\Omega \ : \ d_\Omega({\bf  y})>\varepsilon \}\,$.    
For $\,\widehat f\,$ in $\,ConvDec(\Omega)\,$, let  $\,\widehat f_\varepsilon :\Omega_\varepsilon \to \R\,$ be the function 
$$\textstyle \widehat f_\varepsilon ({\bf  y}):=\int_{\R^r} \widehat f({\bf  y} +\varepsilon {\bf  w}) \widehat \sigma({\bf  w})d{\bf  w} +\varepsilon \widehat h\,,$$
where $\,\widehat h\,$ is the function given in (\ref{H-HAT}) and $\widehat \sigma: {\R^r} \to \R$ is a smooth, positive,  radial function  (only depending on $R^2=\Vert{\bf w}\Vert^2$), 
with support in
$\B_1({\bf 0})$,   such that $\widehat\sigma'(R^2)<0$ and  $\int_{\R^r} \widehat\sigma ({\bf  w})d{\bf  w} =1$.
Arguments analogous  to those used  in Lemma \ref{APPROSSIMANTI} show that the functions 
$\,\widehat f_\varepsilon\,$ are  in   $\,ConvDec^{\infty,+}(\Omega_\varepsilon)$. 
Then (i) implies that
the corresponding functions $\,f_\varepsilon\,$ belong to  $\,Psh^{\infty,+}(D)^N\,$ and consequently  
  $\,f\,$ belongs to 
$\,Psh\,(D)^N$.\pn
 (iv) follows directly from the definition of  $\,Psh^{+}(D)^N\,$ and of $\,ConvDec^+(\Omega).$ 

\smallskip
Finally, from the inclusions 
$$\begin{matrix}
ConvDec^+(\Omega) & \subset  & ConvDec(\Omega) &\subset &C^0(\Omega) \cr
    \cup  &   &  \cup &\cr
ConvDec^{{\infty},+}(\Omega)  & \subset  & ConvDec^{\infty}(\Omega)  & & 
\end{matrix}  
$$
it follows that  all the above functions on $\Omega$ are continuous,   and so are the corresponding  $\,N$-invariant plurisubharmonic functions on $D$. 
 \end{proof}

\medskip
\section{The Siegel domain point of view}

The goal of this section is to present an alternative characterization of Stein $N$-invariant  domains in an irreducible  Hermitian symmetric space $G/K$, realized as a Siegel domain. 

Denote by $S=NA$  the real split solvable group arising from the Iwasawa decomposition of $G$ subordinated to $\Sigma^+$. 
With the complex structure $J$ described in (\ref{COMPLEXJ}) and the linear form $f_0\in\s^*$ defined by  $f_0(X):=B(X,Z_0)$, where $Z_0\in Z(\k)$ is the element inducing the complex structure on~$\p$, 
 the Lie algebra $\s=\n\oplus \a$ of $S$ has the structure of a {\it normal $J$-algebra} (see \cite{GPSV68} and  \cite{RoVe73},  Sect.\,5,\,A).

This means in particular that $\omega(X,Y):=-f_0([X,Y])$ is a non-degenerate skew-symmetric bilinear form on $\s$ and 
that the symmetric bilinear form
 $ \langle X,Y\rangle:=-f_0([JX,Y])$ is the $J$-invariant positive definite inner product on $\s$  defined in~(\ref{INNERPRODUCT}). 

The adjoint  action of $\a$ on $\s$  decomposes $\s$ into the orthogonal direct sum of the restricted root spaces. 
Moreover, the adjoint action of the element $A_0={1\over 2}\sum_j A_j\in\a $  decomposes $\s$ and $\n$ as
  $$\s=\s_{0}\oplus  \s_{1/2}\oplus \s_1,\qquad \n_j=\n\cap\s_j$$ where
\begin{equation}\label{DOUBLEDECOMP} 
\s_0=\a\oplus\bigoplus_{1\le j<l\le r} \g^{e_j-e_l},\quad  \s_{1/2}=\oplus_{\atop 1\le j\le r}\g^{e_j}, \quad \s_1=\oplus_{\atop 1\le j\le r}\g^{2e_j}\oplus\bigoplus_{1\le j<l\le r}\g^{e_j+e_l}.\end{equation}
 
 \sn
 Let $E_0:= \sum E^j$. The  orbit 
 \begin{equation}\label{HOMOGENEOUSCONE} V:=Ad_{\exp\s_0}E_0  \end{equation} 
is a sharp  convex homogeneous selfadjoint cone in $\s_1$ and 
$$F\colon \s_{1/2}\times\s_{1/2}\to \s_1+i\s_1,\qquad F(W,W')={1\over 4}([JW',W]-i[W',W]),$$  is a  $ V$-valued Hermitian form, i.e.  it is sesquilinear and  $F(W,W)\in \overline V$, for all $W \in \s_{1/2} $.   
The  Hermitian  symmetric space  $G/K$ is realized as a Siegel domain 
 in $\s_1^\C\oplus\s_{1/2}$ as follows
 $$ D( V,F)=\{(Z,W)\in \s_1\oplus i\s_1\oplus \s_{1/2}~|~Im(Z)-F(W,W)\in V\}.
 $$  
 If $\s_{1/2}=\{0\}$ 
 then $G/K$ is of {\it tube type}, otherwise it is of {\it non-tube type}.
The group $S$ acts on  $ D( V,F)$  by the affine transformations 
\begin{equation}\label{ACTION} (Z,W)\mapsto (Ad_sZ+a+2iF(Ad_sW,b)+iF(b,b),Ad_sW+b),  \end{equation}
where $s\in\exp\s_0$,~$a\in \s_1$, and $b\in  \s_{1/2} .$ Recall that   $J\a=\oplus_j\g^{2e_j}$, (cf.\,(\ref{CPLXBIS})) and denote by $J\a^+$ the   positive octant in $J\a$.  One easily verifies that if $E\in J\a^+$, then $Ad_{\exp\a}E=J\a^+$. This and the fact that $S$ acts freely and  transitively on $D( V,F)$ imply that every $N$-orbit meets the set $J\a^+$ is a unique point.

Let $D $ be an $N$-invariant domain  in a symmetric Siegel domain. 
Then  $$D=\{(Z,W)\in D( V,F)~|~ Im(Z)-F(W,W)\in V_D\},    
$$
where  $ V_D$ is an $Ad_{\exp \n_0}$-invariant open  subset in $ V$, determined  by $$i V_D:=D\cap i V.$$
The   $r$-dimensional set  
$$\VD:= V_D\cap J\a^+ , $$  
  intersects every $N$-orbit of $D$ in a unique point, and it 
is the base of an $r$-dimensional  tube domain in $J\a\oplus iJ\a$. The map  $R\exp\a\cdot eK\to  R\exp\a\cdot (iE_0,0)$ 
$$\textstyle \exp(\sum_jx_jE^j) \exp({1\over 2}\sum_k\ln(y_k)A_k)K\mapsto (i Ad_{\exp({1\over 2}\sum_k\ln(y_k)A_k)}E_0+\sum_jx_jE^j ,0)$$
is the inverse of the map ${\mathcal L}$ of Proposition \ref{FACT1} (cf. Cor.\,\ref{ASSOCIATEDTUBE}).

Let $C$ be the cone defined in~(\ref{CONE}). Then the characterization of $N$-invariant Stein domains in a symmetric  Siegel domain   can be formulated as follows.

\begin{prop} \label{STEINDOMAINS} Let $D$ be an $N$-invariant domain in an irreducible symmetric Siegel domain. Then $D$ is Stein if and only if $\VD$ is convex and  $C$-invariant.
 \end{prop}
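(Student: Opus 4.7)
The plan is to derive Proposition \ref{STEINDOMAINS} from Theorem \ref{CASOLISCIO1} by transporting the convexity and $C$-invariance conditions across the explicit identification between the Siegel-domain slice $\VD$ and the base $\Omega$ of the associated tube domain in $\HH^r$.

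First, I would extract from the formula for the inverse of $\mathcal L$ displayed just above the proposition the linear bijection
$$\Phi\colon \R^r \longrightarrow J\a,\qquad (y_1,\dots,y_r)\mapsto \sum_{j=1}^r y_j E^j,$$
which sends the tube base $\Omega$ onto the Siegel slice $\VD=V_D\cap J\a^+$. Being linear, $\Phi$ preserves convexity; and since the cone $C$ of (\ref{CONE}) is defined coordinate-wise with respect to the standard basis of $\R^r$, the map $\Phi$ carries $C$ to the analogous cone in $J\a$ associated to the basis $E^1,\dots,E^r$. Thus $\Omega$ is convex and $C$-invariant in $\R^r$ if and only if $\VD$ is convex and $C$-invariant in $J\a^+$. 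Combining this with Theorem \ref{CASOLISCIO1} completes the proof.

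Since this reduction is essentially a change of coordinates, no serious obstacle arises along this route. The substantive content of Section 6, hinted at by the Introduction's claim that the Siegel-domain point of view yields an \emph{independent} proof of Theorem \ref{CASOLISCIO1}, would instead require first proving Proposition \ref{STEINDOMAINS} from normal $J$-algebra theory and then deducing Theorem \ref{CASOLISCIO1} from it through $\Phi$. That alternative approach would construct $N$-invariant plurisubharmonic exhaustions of $D$ out of convex, $\overline C$-decreasing functions on $\VD$, using the Hermitian form $F$ to encode the Levi contributions coming from the $\s_{1/2}$-directions. The main obstacle in that route would lie in the non-tube case: the $F(W,W)$-term forces the last coordinate of $C$ to be strictly positive (rather than zero, as in the tube case), so one must produce an exhaustion whose monotonicity along that extra direction is controlled by the affine action of $Ad_{\exp\n_0}$ on the cone $V$.
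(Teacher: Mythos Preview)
Your reduction via the linear map $\Phi$ is correct: since $\Phi$ carries $\Omega$ bijectively onto $\VD$ and the cone $C$ onto the corresponding cone in $J\a$, Theorem~\ref{CASOLISCIO1} immediately yields the proposition. However, this is not the route the paper takes, and you yourself anticipate why: Section~6 is meant to give an \emph{independent} proof of the Stein characterization, so deducing Proposition~\ref{STEINDOMAINS} from Theorem~\ref{CASOLISCIO1} defeats its purpose.

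The paper's argument avoids Theorem~\ref{CASOLISCIO1} entirely and proceeds by direct convexity. In the tube case, $D$ is itself a tube domain in $\s_1^\C$ with base the $Ad_{\exp\n_0}$-invariant set $V_D$, so Steinness reduces to convexity of $V_D$; the paper then shows, using the projection $p\colon\s_1\to J\a$ and the explicit form of the $Ad_{\exp\n_0}$-action on $J\a^+$ (Lemma~\ref{PROPERTIES}), that $V_D$ is convex if and only if $\VD$ is convex and $C_{r-1}$-invariant. In the non-tube case, the paper exploits the fact that $N$ acts on the Siegel domain by \emph{affine} transformations: via the projection $\widetilde p$ onto $iJ\a$ and Lemma~\ref{PROPERTIES2}, one shows that $\VD$ convex and $C_r$-invariant forces $conv(D)\subset D$, so $D$ is convex in $\s_1^\C\oplus\s_{1/2}$ and hence Stein. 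Your speculation that the independent proof would construct $N$-invariant plurisubharmonic exhaustions is off the mark: no potential theory enters Section~6, only the affine geometry of the $N$-action and elementary convex-hull manipulations. What the paper's approach buys is a self-contained argument from the Siegel realization; what yours buys is brevity, at the cost of relying on the Levi-form machinery of Sections~3--4.
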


 \smallskip
In order to prove the above proposition, we need some preliminary results. For this we separate the tube and the non-tube case.

\sn
{\bf The tube case}.
 Denote by $conv( V_D)$ the convex hull of $ V_D$ in $\s_1$. Since  $ V_D$ is $Ad_{\exp\n_0}$-invariant and the action is linear, then also $conv( V_D)$ is $Ad_{\exp\n_0}$-invariant. Denote by $p\colon \s_1\to J\a  $ the   projection onto $J\a$, parallel to $\oplus \g^{e_j+e_l}$. Denote  by 
 \begin{equation}\label{EJDUAL}
 (E^1)^*, \ldots,(E^r)^* \end{equation}
  the elements in  the dual $\n^*$  of $\n$, with the property that $(E^j)^*(E^l)=\delta_{jl}$ and $(E^j)^*(X^\alpha)=0$, for all $X^\alpha\in \g^\alpha$, with $\alpha\in \Sigma^+\setminus\{2e_1,\ldots,2e_r\}$.

\sn
\begin{lem}  \label{PROPERTIES} 
One has
\begin{itemize}

\item [(i)]  Let $E=\sum x_kE^k \in J\a^+$, where $x_k\in \R^{>0}$. Then $$p(Ad_{\exp\n_0} E)=E+C_{r-1}.$$

In particular,   
  $(E^r)^*(Ad_{\exp tX} E)=x_r ,$    for all $X\in\n_0$ and $t\in\R$. 

\sn
  \item [(ii)] Let $X\in\g^{e_j-e_l}$.  Then $[[E^l,X],X]=s E^j$, for some $s\in\R^{> 0}$.

\sn
\item [(iii)] One has $p(conv( V_D))=conv(p( V_D))  $.
\end{itemize}
\end{lem}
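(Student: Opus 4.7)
The plan is to treat (ii) first, since it is the local computation on which (i) depends, then handle (i) in two stages (a single root vector, then a general $X \in \n_0$ via a weight decomposition), and dispose of (iii) as a triviality.

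For (ii), combine Lemma~\ref{COMPLEXSTRUCTURE}(b), which gives $JX = [E^l, X]$ when $X \in \g^{e_j-e_l}$, with Lemma~\ref{NBRACKETS}(a), which gives $[JX, X] = s\,E^j$ for some $s \in \R \setminus \{0\}$. Substitution yields $[[E^l, X], X] = s\,E^j$, and the positivity $s > 0$ follows from Remark~\ref{EXPLANATION} (equivalently from the identification $s = 4\Vert X\Vert^2/{\bf b}$ obtained in the proof of Proposition~\ref{LEVI}).

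For (i), begin with $X \in \g^{e_j-e_l}$, $j < l \le r$. The Taylor expansion of $Ad_{\exp tX}E$ truncates at order two thanks to the restricted-root arithmetic: $[X, E^k] = 0$ for $k \ne l$ (since $e_j-e_l+2e_k$ is not a root), $[X, E^l] \in \g^{e_j+e_l}$, $[X, \g^{e_j+e_l}] \subset \g^{2e_j}$, and the next bracket vanishes because $3e_j - e_l$ is not a root. The linear-in-$t$ term is killed by $p$, and the quadratic term equals $\tfrac{t^2 x_l s}{2}E^j$ by (ii), which, since $s>0$, sweeps out the ray $E + \R^{\ge 0}E^j$ as $X$ and $t$ vary. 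For general $X \in \n_0$, decompose $X = X_0 + X_{-1/2}$ according to the $\ad(A_r/2)$-eigenvalues: $X_0 \in \bigoplus_{j<l<r}\g^{e_j-e_l}$ has weight $0$ and $X_{-1/2} \in \bigoplus_{j<r}\g^{e_j-e_r}$ has weight $-1/2$. The weight-$1$ subspace of $\s_1$ equals $\R E^r$; since every $\ad$ operator has weight $\le 0$ on $\s$, the weight-$1$ part of $Ad_{\exp X}E$ must come from $x_r E^r$ acted on only by weight-$0$ brackets, and $[X_0, E^r] = 0$ (the root $e_j-e_l+2e_r$ does not exist for $l < r$), so this part equals exactly $x_r E^r$. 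Extracting the $E^r$-coefficient is the ``in particular'' statement. The weight-$0$ piece of $p(Ad_{\exp X}E)$ then splits into an $X_0$-contribution on $\sum_{k<r}x_k E^k$, which by induction on $r$ lies in $\sum_{k<r} x_k E^k + C_{r-2}$, and an $X_{-1/2}$-quadratic contribution whose $J\a$-component is $\tfrac{x_r}{2}\sum_{j<r}[X^{(j)},[X^{(j)},E^r]]$ with $X^{(j)}$ the $\g^{e_j-e_r}$-component of $X_{-1/2}$ (cross-terms for $j \ne j'$ land in $\g^{e_j+e_{j'}}$, outside $J\a$), which by (ii) is a nonnegative combination of $E^1,\dots,E^{r-1}$.

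Finally, (iii) is immediate: $p$ is linear, and any linear map commutes with the convex-hull operation, so $p(\conv(V_D)) = \conv(p(V_D))$. The main obstacle in this plan is the rigorous completion of (i) for a general $X \in \n_0$: one must control all the higher-order cross terms between $X_0$ and $X_{-1/2}$ in the Taylor expansion of $Ad_{\exp X}E$ and verify the surjectivity onto $E + C_{r-1}$. The inductive hypothesis on $r$ combined with the positive quadratic contributions supplied by (ii) should carry the argument through, but the bookkeeping and the precise interpretation of $C_{r-1}$ (open cone versus closure, given that the identity of $\exp\n_0$ produces $E$ itself in the orbit) require some care.
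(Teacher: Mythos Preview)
Your argument for (ii) invokes Remark~\ref{EXPLANATION} for the positivity $s>0$, but that remark says precisely that the positivity of $s$ is \emph{explained by} statement~(ii) of the present lemma; so this citation is circular. The parenthetical alternative, the identification $s=4\Vert X\Vert^2/{\bf b}$, is not circular in principle, but it is established only in Remark~\ref{CONSTANTS} via Proposition~\ref{POTENTIALN} in the Appendix, so you would be importing the entire construction of the $N$-invariant Killing potential into what is meant to be an elementary lemma of Section~6. Either way you have inverted the paper's logical flow.

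The paper proceeds in the opposite order. It proves (i) first by quoting an explicit coordinate formula from \cite{RoVe73}, Theorem~4.10: in suitable coordinates on $\n_0$ one has $(E^i)^*(\Ad_{h_0}E)=x_i\bigl(1+\sum_{p,\,j>i}(x_{ij}^p)^2\bigr)$, which immediately gives both $p(\Ad_{\exp\n_0}E)=E+C_{r-1}$ and the constancy of the $E^r$-coefficient. Then (ii) follows from (i): the curve $t\mapsto \Ad_{\exp tX}E_0$ lies in $V$, its $E^j$-component under $p$ equals $1+\tfrac{t^2}{2}s$ by Lemma~\ref{NBRACKETS}(a), and (i) forces this to stay $\ge 1$ for all $t$, whence $s>0$. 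Your inductive strategy for (i), even if the cross-term bookkeeping were completed, would essentially be re-deriving the Rossi--Vergne formula by hand; invoking the reference is shorter and gives the full statement at once. Your treatment of (iii) matches the paper's.
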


\begin{proof}

\sn
(i)  Let $E\in J\a^+$ and let 
 $h_0\in\exp\n_0$, where  $\n_0=\oplus_{\atop 1\le i<j\le r}\g^{e_i-e_j}$. 
 By Theorem 4.10 in \cite{RoVe73}, for every $1\le i<j\le r$  there exists a basis $\{E_{ij}^p\}$ of $\g^{e_i-e_j}$, with coordinates $\{ x_{ij}^p\}_p$, such that    
$$\textstyle  (E^i)^*(Ad_{h_0}   E) =  x_i (1+\sum_{\atop p, \,j>i} (x_{ij}^p)^2)  $$
(formula (4.13) in \cite{RoVe73}). Since $i<r$, one has
$p( Ad_{\exp X}   E)= E+C_{r-1}$, as claimed. In particular    the $r^{th}$ coordinate of $E$ does not  vary  under the $Ad_{\exp\n_0}$-action.

 \sn
(ii)  Let $X\in\g^{e_j-e_l}$.  
Then $\exp tX\in\exp\n_0$ and the  curve 
 $$\textstyle Ad_{\exp tX}E_0 =\exp ad_{tX}(E_0)=E_0+t[X,E^l]+{t^2\over 2} [X,[X,E^l]], ~t\in\R,$$ 
 is   contained in $V$.    By Lemma\,\ref{NBRACKETS}\,(a), its projection onto $J\a$ is given by 
  $$\textstyle p(Ad_{\exp tX}E_0)= (E^j)^*(Ad_{\exp tX}E_0 )E^j= (1+\frac{t^2}{2}s)E^j ,  $$
  for some  $s\in \R$, $s\not=0$. Now (i)   implies that  $ 1+\frac{t^2}{2}s  >0$, for all $t\in\R$.  Therefore $s>0$, as claimed.

 \sn
(iii)  We prove the two inclusions. By the linearity of $p$, the set  $p(conv( V_D))$ is convex and contains  $p( V_D)$. 
Hence, $p(conv( V_D))\supset conv(p( V_D))$.
Conversely, let $z\in conv( V_D)$. Then  there exist $t_0\in(0,1)$ and $x,y\in V_D$ such that $z=t_0x+(1-t_0)y$. Since  $p(z)= t_0p(x)+(1-t_0)p(y)$, one has 
$ p( conv( V_D))\subset conv(p( V_D)) $. 
 \end{proof}

\bn
{\bf The non-tube case}.  
Denote  by $\, \widetilde p\colon \s_1^\C\oplus\s_{1/2} \to iJ\a\,$ the   projection  onto~$\, iJ\a$ parallel to $\s_1\oplus i(\oplus\g^{e_j+e_l})\oplus \s_{1/2}$.

\sn
\begin{lem} \label{PROPERTIES2} Let $E\in J\a^+$. Then $\widetilde p(N\cdot(iE,0))=i(E+\overline C_r)$.
\end{lem}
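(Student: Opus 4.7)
The plan is to apply the $N$-action formula (\ref{ACTION}) to $(iE,0)$, push the result through $\widetilde p$, and reduce the statement to Lemma \ref{PROPERTIES}(i) together with a computation of the Hermitian form $F$ on $\s_{1/2}$.

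First, I would parametrize an arbitrary $n \in N$ via $(s, a, b)$ with $s \in \exp\n_0$, $a \in \n_1 = \s_1$ and $b \in \n_{1/2} = \s_{1/2}$. Formula (\ref{ACTION}) then gives
\begin{equation*}
n \cdot (iE, 0) = (Ad_s(iE) + a + iF(b,b),\ b).
\end{equation*}
Since $\widetilde p$ annihilates $\s_1$, $\s_{1/2}$ and $i\bigoplus_{j<l}\g^{e_j+e_l}$,
\begin{equation*}
\widetilde p\bigl(n \cdot (iE, 0)\bigr) = i\, p(Ad_s E) + i\, p(F(b,b)),
\end{equation*}
where $p$ is the projection of Lemma \ref{PROPERTIES}. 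The first summand ranges over $E + C_{r-1}$ by Lemma \ref{PROPERTIES}(i), so it remains to show that the second summand sweeps out $\overline C_r$.

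For this, I would use $[b,b] = 0$ to rewrite $F(b,b) = \tfrac14 [Jb,b]$, decompose $b = \sum_j b_j$ with $b_j \in \g^{e_j}$, and recall that $J\g^{e_j} = \g^{e_j}$ (Lemma \ref{COMPLEXSTRUCTURE}(c)). Off-diagonal brackets $[Jb_j, b_l]$ with $j \neq l$ land in $\g^{e_j+e_l}$ and so vanish under $p$, while diagonal ones $[Jb_j, b_j]$ lie in $\g^{2e_j} = \R E^j$; hence $p(F(b,b)) = \sum_j \tau_j E^j$ for some real $\tau_j$. Non-negativity of the $\tau_j$'s follows from the $V$-valuedness of $F$: $F(b,b) \in \overline V$, and a direct application of Lemma \ref{PROPERTIES}(i) to $V = Ad_{\exp\n_0}(Ad_{\exp\a} E_0)$ shows $p(\overline V) = \overline C_r$. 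Surjectivity comes from Lemma \ref{NBRACKETS}(c): choosing $b_j = sX$ for a single basis vector $X \in \g^{e_j}$ from Remark \ref{BASIS2}(c) yields $\tau_j = s^2 t/4$ with $t > 0$, so every non-negative value is attained. Combining the two contributions and noting $C_{r-1} \subset \overline C_r$ gives $\widetilde p(N \cdot (iE,0)) = i(E + \overline C_r)$.

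The main subtlety is establishing $\tau_j \geq 0$ for arbitrary $b_j \in \g^{e_j}$ rather than only for the basis vectors of Remark \ref{BASIS2}(c); this is precisely the content of $F$ being $V$-valued, and the identification $p(\overline V) = \overline C_r$ is the key step tying the general case to the explicit computation of Lemma \ref{NBRACKETS}(c).
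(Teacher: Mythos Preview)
Your proposal is correct and follows essentially the same route as the paper: compute the $N$-orbit of $(iE,0)$ via (\ref{ACTION}), project by $\widetilde p$, and use Lemma~\ref{PROPERTIES}(i) for the $Ad_{\exp\n_0}$-part together with an analysis of $F(b,b)$ for the $\s_{1/2}$-part. The paper's own proof is terser: it simply asserts that in the symmetric case $\{[Jb,b]:b\in\s_{1/2}\}=\overline{J\a^+}$ and concludes. Your argument---using the $V$-valuedness of $F$ to get $p(F(b,b))\in p(\overline V)\subset\overline C_r$, and Lemma~\ref{NBRACKETS}(c) for surjectivity---is a more explicit justification of precisely that assertion, so the two proofs coincide once one unpacks the paper's claim.
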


\begin{proof} 
  The $N$-orbit of the point $(iE,0)\in \s_1^\C\oplus \s_{1/2}$ is given by
 \begin{equation}\label{ORBITA2} N\cdot (iE,0)=S_{1/2}S_1Ad_{\exp\n_0}(iE,0)=(a+i(Ad_{\exp\n_0}E+F(b,b)),b),\end{equation} where $a\in\s_1$ and $b\in\s_{1/2}$.
By (\ref{ORBITA2}) and  Lemma \ref{PROPERTIES}\,(i), one has
 $\widetilde p(N\cdot (iE,0))=i(E+C_{r-1}+  \tilde p(F(\s_{1/2},\s_{1/2})))$.  
Since in  the symmetric  case   $ \{[Jb,b],~b\in\s_{1/2}\} =\overline{J\a^+}$,  
it follows  that  $\tilde p(N\cdot(iE,0))=i(E+\overline C_r)$, as claimed. 
\end{proof}

 \sn
\begin{remark} \label{EXPLANATION}
{\rm (a)}  Statement (i) in Lemma \ref{PROPERTIES} 
explains why in Prop.\ref{LEVI} (iii)  no conditions appear on $\frac{\partial \tilde f}{\partial a_r}$. 

\noindent 
{\rm (b)}  Statement (ii) in Lemma \ref{PROPERTIES} and  the fact that  $F(b,b)=[Jb,b]$, for $b\in\s_{1/2}$,  takes values in $\overline{J\a^+}$, 
explain  why the real constants $s$ and $t$ in Lemma \ref{NBRACKETS}(a)(b)  and later in Proposition \ref{LEVI}(iii)(iv) are strictly positive.
\end{remark}

\bn
{\it Proof of Proposition} \ref{STEINDOMAINS}.   {\bf The tube case}.   An $N$-invariant domain $D$ in a symmetric tube domain $D(V)$ is itself a tube domain with base the $Ad_{\exp\n_0}$-invariant set $ V_D$.  
Hence  all we have to prove is that  $ V_D$ is convex if and only if  $\VD$ is convex and  $\VD+C_{r-1}\subset \VD$. 
 
Assume that $ V_D$ is convex. Then $\VD$ is convex, being the intersection of $ V_D$ with the 
positive  octant $J\a^+$. To prove that $\VD$ is $C$-invariant,  let $E=\sum_jx_jE^j\in \VD$, where $x_j>0$,  and let 
 $X\in \g^{e_j-e_l} $ be a  non-zero element.   
For every $t\in\R$, 
$$\textstyle Ad_{\exp tX}E=E+tx_l [X,E^l] +{1\over 2}t^2 x_l[X,[X,E^l]] $$
 lies in $ V_D$ and, by the convexity assumption, so does $E+ {1\over 2}t^2x_l [X,[X,E^l]] =E+t^2s x_lE^j $,   where $s> 0$ (cf.\,Lemma  \ref{PROPERTIES} (ii)).  
This argument applied to all $j=1,\ldots,r-1$ and the convexity of $\VD$  show  that  $\VD+C_{r-1}\subset \VD$, as desired.

 Conversely,  assume  that $\VD$ convex and $C$-invariant.  We prove the convexity of $ V_D$ by showing  that  $conv( V_D)\subset  V_D$. 
From  Lemma \ref{PROPERTIES} (ii) and the $C$-invariance of $\VD$, one has 
$$\textstyle p( V_D)=p(Ad_{\exp\n_0}\VD)=\VD+C_{r-1} \subset \VD.$$
Moreover, from Lemma \ref{PROPERTIES} (iii), the above inclusion  and   the convexity   of $\VD$, one has
$$\textstyle conv( V_D)\cap J\a\subset p(conv( V_D))= conv(p( V_D))\subset \VD.
 $$
 Finally, from the $Ad_{\exp\n_0}$-invariance  of $conv( V_D)$ it follows that 
 $$\textstyle conv( V_D)=Ad_{\exp\n_0}  (conv( V_D)\cap J\a)\subset Ad_{\exp\n_0} \VD= V_D.$$
This completes the proof of the proposition in  the tube case.

 \mn
{\bf The non-tube case}.  Let $D$ be an $N$-invariant domain in a Siegel domain $D( V,F)$.  Denote by $conv(D)$ the convex hull of $D$ in  $\s_1^\C\oplus\s_{1/2}$. As $N$ acts on $D$ by affine transformations, also  $conv(D)$ is $N$-invariant.  

If $D$ is Stein, then $D\cap \{W=0\} $ is a Stein tube domain in $\s_1^\C$ with base $ V_D$. By  the result  for the tube case and    Lemma \ref{PROPERTIES2}, 
 $\VD$ is convex and~$\VD+\overline C_r\subset \VD$. 

Conversely, assume that  $\VD$ is convex and $C$-invariant, i.e. $\VD+\overline C_r\subset \VD$ (see Def.\,\ref{CONEINVARIANT}).  We are going to prove 
 that $D$ is convex. 
By Lemma \ref{PROPERTIES2}, one has
$$\widetilde p(D)=\widetilde p(N \cdot \VD)=i(\VD+\overline C_r)\subset i\VD. $$
Moreover, 
$$conv(D)\cap iJ\a\subset \widetilde p(conv(D))=conv(\widetilde p(D))\subset i\VD.$$
By the $N$-invariance of $conv(D)$,  one obtains
$$conv(D)=N\cdot (conv(D)\cap i J\a)\subset N\cdot i\VD=D.$$
Hence $D$ is convex and therefore Stein (cf. \cite{Gun90}, Vol.1, Thm.10, p. 67). This concludes the proof  of the proposition.
\qed

\mn
\begin{rem} The assumption $\VD+C_{r}\subset \VD$ implies $\VD+C_{r-1}\subset \VD$ and in particular $ V_D$ is convex. This means that if $D\subset D( V,F)$ is Stein, then the tube domain $D \cap \{W=0\}$ is Stein. 
The converse may not hold true, as $ V_D=Ad_{\exp\n_0}\VD$ convex does not imply $\VD+C_r\subset \VD$. 
\end{rem}

 \bigskip
\section{Appendix: $\,N$-invariant potentials for the Killing metric.}
\label{OMEGAN}


\medskip

Let $G/K$ be a non-compact, irreducible  Hermitian symmetric space. The
Killing form $B$ of $\g$, restricted to $\p$,  induces  a $G$-invariant
K\"ahler metric on $G/K$, which we refered to as the Killing metric. 
In this section  we exhibit an $\,N$-invariant
potential of the Killing metric and the associated moment map in a Lie theoretical fashion. 
All the $\,N$-invariant
potentials of the Killing metric  are detemined in Remark \ref{UNIQUE}.

 Let $f\colon G/K\to\R$ be  a smooth  $N$-invariant function.   The   map $\mu \colon G/K\to\n^*$,   defined  by   \begin{equation}\label{MOMENT2} \,\mu_f(z)(X):=d^cf(\widetilde X_{z}),\end{equation} for $X\in\n$, is  $N$-equivariant (cf. (\ref{DCf})). 
If  $f$ is strictly plurisubharmonic,  then it is  referred to as the moment map associated with $f$.

\bn

\bigskip
\begin{prop}
\label{POTENTIALN} 
Let $\,z=naK \in G/K$, where  
$\,n \in  N$, $\,a=\exp H \in A\,$
and  $\,H=\sum_ja_jA_j\in\a$. Let  ${\bf b}$ be the constant defined in (\ref{BI}).

\sn
\item{$(i)$} The $\,N$-invariant function $\,\rho:G/K \to \R\,$ defined by 
$$\, \textstyle \rho(na K):=  \textstyle -\frac{1}{2}{\sum_{j=1}^r}B(H,\,A_j)=  \textstyle -\frac{\bf b}{2}(a_1+ \dots+a_r)\,,$$
is a potential of  the Killing metric. 

\sn
\item{$(ii)$} The moment map $\,\mu_\rho: G/K \to \n^*\,$ associated with
$\,\rho\,$ is given by 
\begin{equation}\label{MOMENTIDENTITY}  \,\mu_\rho(naK)(X) =\textstyle -\frac{\bf b}{4}\sum_{j=1}^r e^{-2a_j}(E^j)^* (\Ad_{n^{-1}} X)=
 B(Ad_{n^{-1}}X , Ad_a Z_0) 
\,,\end{equation}
where  $\,X\in\n$, and  the $\,(E^j)^*\,$ are defined in (\ref{EJDUAL}).
\end{prop}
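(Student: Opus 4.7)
The plan is to handle (i) and (ii) as essentially direct applications of Proposition \ref{LEVI} and of the auxiliary formula (\ref{DCf2}) from its proof. The crucial common input is that for $\widetilde \rho(H)=-\tfrac{{\bf b}}{2}(a_1+\cdots+a_r)$ one has $\partial\widetilde\rho/\partial a_j\equiv -{\bf b}/2$ and vanishing Hessian; everything else is driven by these two values.

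\emph{For part (i)}, I would compute $h_\rho$ at a point $aK\in\exp(\a)\cdot eK$ on the $J$-stable basis of Remark \ref{BASIS2}. Substituting the above derivatives into Proposition \ref{LEVI}(ii)-(iv) gives $h_\rho(a_* A_j, a_* A_l) = {\bf b}\,\delta_{jl} = B(A_j, A_l)=\langle A_j, A_l\rangle$ (by strong orthogonality and (\ref{BI})) and $h_\rho(a_* X, a_* X)=\|X\|^2=\langle X, X\rangle$ for every root-space basis vector $X\in\g^{e_j\pm e_l}$ or $\g^{e_j}$. Together with the block-orthogonality from Proposition \ref{LEVI}(i), which coincides with the $\langle\cdot,\cdot\rangle$-orthogonal decomposition of $\s$, this shows that at $aK$ the form $h_\rho$, pulled back along $a_*\colon\s\to T_{aK}(G/K)$, equals the inner product (\ref{INNERPRODUCT}). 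Via $\phi$ the latter is the Killing form on $\p$, so $h_\rho$ coincides with the Killing metric at every point of the slice $\exp(\a)\cdot eK$; by $N$-invariance of $\rho$ and $G$-invariance of the Killing metric, the identity extends to all of $G/K$.

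\emph{For part (ii)}, the first equality in (\ref{MOMENTIDENTITY}) follows immediately from (\ref{DCf}) and (\ref{DCf2}): setting $z=naK$, the $N$-equivariance gives $\mu_\rho(z)(X)=d^c\rho(\widetilde{\Ad_{n^{-1}}X}_{aK})$, and after expanding $\Ad_{n^{-1}}X\in\n$ in the restricted root decomposition, (\ref{DCf2}) isolates the $\g^{2e_j}$-components each with weight $\tfrac{1}{2}e^{-2a_j}\cdot(-{\bf b}/2)$. For the second equality I would decompose $\k=\m\oplus\bigoplus_{\alpha\in\Sigma^+}(1+\theta)\g^\alpha$ and write $Z_0=Z_0^\m+\sum_\alpha(X_\alpha+\theta X_\alpha)$; the defining relation $[Z_0,A_j]=I_0 A_j$, together with $I_0 A_j=-E^j+\theta E^j$ (obtained from Lemma \ref{COMPLEXSTRUCTURE}(a) and $\phi\circ J=I_0\circ\phi$) and $\alpha(A_j)=2\delta_{jl}$ for $\alpha=2e_l$, pins down $X_{2e_j}=\tfrac{1}{2}E^j$. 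Consequently $\Ad_a Z_0$ contains $\tfrac{1}{2}(e^{2a_j}E^j+e^{-2a_j}\theta E^j)$ in each $\g^{2e_j}\oplus\g^{-2e_j}$-direction, and $B$-orthogonality of distinct restricted root spaces reduces $B(\Ad_{n^{-1}}X,\Ad_a Z_0)$ to $\tfrac{1}{2}\sum_j e^{-2a_j}(E^j)^*(\Ad_{n^{-1}}X)\,B(E^j,\theta E^j)$. A short invariance computation based on (\ref{NORMALIZ1}) yields $B(E^j,\theta E^j)=-{\bf b}/2$, matching the first expression.

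The one genuinely subtle step is the identification of the $(1+\theta)\g^{2e_j}$-components of $Z_0$ from its defining relation, which requires careful tracking of signs through the complex structure $I_0$ and the $\s\l(2)$-normalizations (\ref{NORMALIZ1}); every other step is essentially a direct substitution.
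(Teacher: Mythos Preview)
Your argument for (ii) is essentially the paper's: both identify the $(1+\theta)\g^{2e_j}$-component of $Z_0$ as $\tfrac12(E^j+\theta E^j)$, compute $B(E^j,\theta E^j)=-{\bf b}/2$, and pair against $\Ad_{n^{-1}}X\in\n$. (The paper simply cites the decomposition $Z_0=S_0+\tfrac12\sum_j(E^j+\theta E^j)$ with $S_0\in\m$; your derivation from $[Z_0,A_j]=I_0A_j$ works, but note you must also check that it forces $X_\alpha=0$ for $\alpha\in\Sigma^+\setminus\{2e_1,\dots,2e_r\}$, otherwise those components would contribute extra terms to $B(\Ad_{n^{-1}}X,\Ad_aZ_0)$.)

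Your argument for (i), however, is circular as written. You substitute into Proposition~\ref{LEVI}(iii)--(iv) using the explicit constants $\|X\|^2/{\bf b}$ appearing there, but look at how those constants are obtained in the paper: the proof of Proposition~\ref{LEVI}(iii)--(iv) only shows $h_f(a_*X,a_*X)=-\tfrac{s}{2}\,\partial\widetilde f/\partial a_j$ with $s$ the undetermined nonzero bracket constant from Lemma~\ref{NBRACKETS}, and then invokes Remark~\ref{CONSTANTS} to pin down $s=4\|X\|^2/{\bf b}$. That remark, in turn, is deduced by comparing with formula~(\ref{TESI}) \emph{from the proof of the very proposition you are trying to establish}. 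So the precise coefficients you are plugging in are not yet available at this point in the logical order.

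The paper breaks the circle by reversing your order: it proves (ii) first, then for the root-space blocks it goes back to the raw identity~(\ref{FORMULONE}), which gives $h_\rho(a_*X,a_*Y)=-e^{(\alpha+\beta)(H)}d^c\rho(\widetilde{[JY,X]}_z)$ without any knowledge of $s$ or $t$. Substituting the moment-map formula $d^c\rho(\widetilde W_z)=B(W,\Ad_aZ_0)$ and using $\Ad_a$-equivariance together with $[Z_0,[Z_0,\cdot\,]]=-\mathrm{Id}$ on $\p$ yields $h_\rho(a_*X,a_*Y)=-\tfrac12B(X,\theta Y)$ directly. Only \emph{afterwards} does Remark~\ref{CONSTANTS} read off the values of $s,t$ by specializing $f=\rho$. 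If you want to keep your substitution approach, you would need an independent computation of $[[E^l,X],X]=sE^j$ and $[JX,X]=tE^j$ in terms of $\|X\|^2$; that is doable via Killing-form invariance, but it is exactly the work the paper's route absorbs into the single $Z_0$-manipulation.
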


\mn
\begin{proof}
(i)  Let $naK\in G/K$, where $a=\exp H$  and $H=\sum_ja_jA_j$.  
The function  $\widetilde  \rho:\a \to \R\,$  associated to $\rho$ is given by
 $\textstyle \widetilde\rho(H)= -{1\over 2} {\sum_{j=1}^r}a_jB(A_j,\,A_j)\, $ 
(cf.\,(\ref{EFFETILDE})). 
In order to obtain (i), we first prove the identities  (\ref{MOMENTIDENTITY}).
By (\ref{MOMENT2}) and  (\ref{DCf2}), one has  
\begin{equation}\label{MOMENT3} \textstyle \mu_\rho( aK)(X)=  d^c\rho( \widetilde X_{aK}) =-\frac{\bf b}{4}\sum_{j=1}^r e^{-2a_j}(E^j)^* ( X).\end{equation}
By (\ref{INNERPRODUCT}),   one has
$$\textstyle (E^j)^*(X) = B(X, \theta E^j)/B(E^j, \theta E^j) = 2B(X,\frac{1}{2}(E^j+\theta E^j) )/B(E^j, \theta E^j).$$
Since 
$$ \textstyle {\bf b}:= B(A_j,A_j) = B (I_0A_j,I_0A_j) = B(E^j - \theta E^j,E^j - \theta E^j)= - 2B(E^j, \theta E^j) $$
and $Z_0=S_0+{1\over 2}\sum_jE^j+\theta E^j $, for some $S_0\in\m$ (cf.\cite{GeIa21}, Sect.\,2),
one obtains
$$ \textstyle -\frac{\bf b}{4}\sum_{j=1}^r e^{-2a_j}(E^j)^*(X) = -\frac{\bf b}{2} \sum_{j=1}^re^{-2a_j}  B(X, \frac{1}{2}(E^j+\theta E^j)/B(E^j, \theta E^j) $$
$$= \textstyle
\sum_{j=1}^r B(X, Ad_a \frac{1}{2}(E^j+\theta E^j)) = B(X,  Ad_a Z_0)\,,$$
and 
 (\ref{MOMENTIDENTITY}) follows from the $N$-equivariance   of $\mu_\rho$.

\medskip
\noindent
Next we are going to show that on $\p\times\p$ one has 
\begin{equation}\label{BELLA} h_\rho(\,a_*\cdot\,,\,a_*\cdot\,)=B(\,\cdot\,,\,\cdot\,).\end{equation} 
 Every $X\in\s$ decomposes as $X=(X-\phi(X))+\phi(X)\in \k\oplus \p$ (see Sect.\,2).  
 Since the projection $\,\phi:\s \to \p\,$ is a linear isomorphism, (\ref{BELLA})  is equivalent to  
\begin{equation}
\label{TESI}
\textstyle \,h_\rho(a_*X,a_*Y)=h_\rho(a_* \phi(X),a_*\phi(Y))=B\big ( \phi(X),\,\phi(Y) \big)
= -\frac{1}{2}B\big ( X,\,\theta Y  \big)\,,  \end{equation}
  for all $\,X$, $\,Y\,$ in $\,\s\,$.    
   By Proposition \ref{LEVI}(i),  it is sufficient to consider $X,\,Y$ both in the same block $a_*\a$, $a_*\g^{e_j-e_l}$, and $a_*\g^{2e_j}$.

  Let $A_j, A_l \in\a$, be as in (\ref{NORMALIZ1}). Then, by
 (ii) of Proposition \ref{LEVI}, one has 
  $$h_\rho(a_*A_j,a_*A_l)=\delta_{jl}B(A_l,\,A_l)=B(A_j,A_l)\,.$$

\mn

Let $X,\,Y\in\g^\alpha$, with $\alpha= e_j-e_l$ or $\alpha= e_j$. Then $JY\in\g^\beta$, for  $\beta=e_j+e_l$ or $\beta=e_j$, respectively.  From 
(\ref{FORMULONE}) and (i) one  obtains  
$$\textstyle h_\rho(a_*X,a_*Y)=-e^{\alpha(H)+\beta(H)}d^c\rho(\widetilde {[JY,X ]}_z)$$
 \begin{equation}\label{HPQ} \textstyle = - e^{\alpha(H)+\beta(H)}B([JY,X],Ad_aZ_0).\end{equation}

\REM{
$I_0\p[\alpha]=\p[\beta]$, with $\beta \in\Sigma^+$.
Let   $\,P,\,Q \in \p[\alpha]\,$ and write
$\,P=X -\theta X \in\p[\alpha],$ for some $X\in\g^\alpha$, and $\,I_0Q =Y-\theta
Y \in \p[\beta]$, for some $Y\in \g^\beta$. By (\ref{FORMULONE}) one has 
  $$h_\rho(a_*P,a_*Q)=-4 e^{\alpha(H)} e^{ \beta(H)}
d^c \rho( \widetilde {[Y,X ]}_z)\,.$$
Since $\,d^c\rho(\widetilde {[Y,X ]}_z)=\mu(z)([Y,X ])$, by (ii) one has  
\begin{equation}\label{HPQ} h_\rho(a_*P,a_*Q)=\textstyle
-2e^{\alpha(H)+ \beta(H)}\sum_j e^{-2a_j}B([Y,X], I_0A_j).\end{equation}      }

\sn
From the invariance properties of the
Killing form $\,B\,$,  the decomposition of $X$ and $JY$ in $\k\oplus \p$ and the identity $\phi(J\cdot) =I_0\phi(\cdot)$ (cf. (\ref{COMPLEXJ})),   one has
$$\textstyle B([JY,X],Ad_aZ_0)=B(Ad_{a^{-1}}[JY,X],Z_0)=e^{-(\alpha(H)+\beta(H))}B([JY,X],Z_0) $$
$$\textstyle =e^{-(\alpha(H)+\beta(H))}\left(B([JY-\phi(JY),X-\phi(X)],Z_0)+B([\phi(JY),\phi(X)],Z_0)\right)$$
$$\textstyle =e^{-(\alpha(H)+\beta(H))}B([Z_0,\phi(Y)],\phi(X)],Z_0)=e^{-(\alpha(H)+\beta(H))}B(\phi(X),[Z_0,[Z_0,\phi(Y)]])$$
$$\textstyle=-e^{-(\alpha(H)+\beta(H))}B(\phi(X),\phi (Y))=\frac{1}{2}e^{-(\alpha(H)+\beta(H))}B(X,\theta Y).$$
It follows  that \begin{equation} \label{KILLINGPOTENTIAL}
\textstyle \,h_\rho(a_* X,a_*Y)=-\frac{1}{2}B\big (X,\,\theta Y \big),\end{equation}
as desired. This concludes the proof of (i).

\sn
(ii) The identity (\ref{KILLINGPOTENTIAL}) implies  that the $N$-invariant function $\rho$ is strictly 
plurisubharmonic. Hence
$\,\mu_\rho\,$ is the moment map associated to $\rho$.
\end{proof}

\medskip
 \begin{remark} 
\label{CONSTANTS} Combining (\ref{ESSE}) and (\ref{TI}) in Proposition \ref{LEVI}  with  (\ref{TESI}), we obtain the exact value of the   positive quantities $s$ and $t$ 
$$\textstyle s=\frac{4\Vert X\Vert^2}{{\bf b}}, ~\hbox{ for $X\in\g^{e_j-e_l}$}, \quad \hbox{and}\quad t=\frac{4\Vert X\Vert^2}{{\bf b}}, ~\hbox{ for $X\in\g^{2e_j}$}.$$

 \end{remark}

\medskip
 \begin{remark} 
\label{RESTRICTION} The map $\,\mu_G:G/K \to \g^*\,$  given by  $\mu_G(gK)(\cdot ):=B(\Ad_{g^{-1}}\,\cdot\,,Z_0)$   is a
 moment map  for the  $\,G$-action on $\,G/K$. 
The moment map $\mu_\rho$  in (ii) of    
Proposition \ref{POTENTIALN}  can be obtained  by   restricting   $\mu_G(naK)$ to 
$\,\n$. Namely, for $\,X \in \n\,$  and $\,naK \in G/K\,$ one has 
  $$\,\mu_\rho(naK)(X)= \mu_G(naK)(X)=B(\Ad_{(na)^{-1}}\,X\,,Z_0).$$ \end{remark} 
 
 \bigskip
In the next remark, all possible $N$-invariant potentials of the Killing metric are determined.
 
\begin{remark} 
\label{UNIQUE} Let $\rho\colon G/K\to\R$ be the potential of the Killing metric given in Proposition \ref{POTENTIALN} and let $\,\sigma\,$ be another $\,N$-invariant potential.  Let $\widehat \rho$ and $ \,\widehat \sigma$ be the corresponding functions on $ (\R^{>0})^r$ defined in (\ref{EFFEHAT}).
\sn
\pn 
(a)  In the non-tube case, one has $\widehat \sigma=\widehat \rho +d$, and therefore $\sigma =\rho +d$, for  some~$d\in\R$;   
\sn
\pn
(b) In the tube case,   one has $\widehat\sigma({\bf  y})=\widehat\rho({\bf  y})+cy_r+d$, for $c,d\in \R$. In particular 
$$\sigma(n\exp (L({\bf y}))K) =\rho(n\exp (L({\bf y}))K)+  c y_r+d,$$
where  $\,n\in N\,$,  $\,{\bf y} =(y_1,\ldots,y_r)\in (\R^{>0})^r$,  and  $c,d \in\R$. 
\smallskip
\end{remark}

\begin{proof} Let   $f:=\sigma -\rho$ be the difference of the two potentials. Then $f$  is a smooth $\,N$-invariant function  on $G/K$ such that $dd^cf(\cdot,J \cdot) \equiv 0$.  Let $\widehat f\colon\Omega\to\R$ be the associated function.
\pn
(a)  In the non-tube case, by  Proposition \ref{LEVI}\,(iv)   and  (\ref{DEREFFETILDE}),  the   function $\widehat f$ 
satisfies~$\frac{\partial\widehat f}{\partial y_j}\equiv 0$, for all $j=1,\ldots r $. Hence  $\widehat f$ is constant on $\,(\R^{>0})^r\,$ and
  $\,f\,$ is constant on~$G/K$. 
\pn
(b) In the tube case, from Proposition \ref{LEVI}, (\ref{HESSEFFETILDE}) and (\ref{DEREFFETILDE}),  it follows that~$\frac{\partial\widehat f}{\partial y_j}\equiv0$, for all $j=1,\ldots r-1$, and $\,
\frac{\partial^2  \widehat f}{\partial  y_r^2}  \equiv0$ . 
Hence   $\,\widehat f\,$  is an affine function of the variable $\,y_r\,$. Equivalently,  $\widehat\sigma({\bf y})=\widehat\rho({\bf y})+cy_r+d$, for $c,d\in\R$, as claimed.  \end{proof}

\mn
\pn
\begin{remark} 
\label{UNIQUE}  Let $D(V,F)$ be a symmetric Siegel domain. Then the Bergman kernel function $K(z,z)$ is $N$-invariant  and   $\ln K(z,z)$   is a potential of the Bergman metric. As both the Killing and the Bergman metric  are $G$-invariant,  they differ by a multiplicative constant. It follows that   $ \ln K(z,z)$ is a multiple of one of the $N$-invariant potentials of the Killing metric described in the above remark.
\end{remark} 

\medskip
\begin{exa}  
\label{DISC}
As an application  of Remark \ref{UNIQUE}, we compute all $N$-invariant potentials of the Killing metric for the upper half-plane in $\C$ and for the Siegel upper half-plane of rank 2.

\sn
(a) Let $\,G=SL(2,\R)\,$ and let $\,G/K\,$ be the  corresponding 
Hermitian symmetric space. Fix an Iwasawa decomposition $NAK$ of $G$.
Since  ${\bf b}=8$ and $r=1$, then the potential  of the Killing metric given in  Proposition\,\ref{POTENTIALN} is
$$\rho(naK)=-4a_1 \quad {\rm and} \quad
\textstyle \widehat \rho(y_1)=\rho(\exp L(y_1)K)=\ln \frac{1}{y_1^2}. $$

Realize $\,G/K\,$ as  the upper half-plane 
$\,\HH=\{z\in\C~|~Im(z)>0\}$, i.e. the orbit of $i\in\C$ under the $SL(2,\R)$-action by linear fractional transformations.
Fix 
$$N= \left \{ \begin{pmatrix}
1  & m  \cr
 0 & 1
\end{pmatrix} \ : \ m \in \R \,\right\} \, \quad\hbox{and}\quad A= \left \{ \begin{pmatrix}
e^{a_1}  & 0  \cr
 0 & e^{-a_1}
\end{pmatrix} \ : \ a_1 \in \R \,\right\}, \,$$
and let $\,\{x_1+iy_1 \in \C \ : \ y_1>0\}$ be tube associated to $\,G/K$.
Since   $$\,\textstyle x_1+iy_1 \to\exp({x_1E^1})\exp (\frac{1}{2} \ln y_1A_1)\cdot i=x_1+iy_1\, $$
 (cf. Prop.\,\ref{FACT1}), then   the potential $\rho$ on $\,\HH\,$ reads as $\,\rho(z) = \ln \frac{1}{({\rm Im} z)^2}. $

  If $\sigma \colon \HH\to \R$ is an arbitrary $N$-invariant potential of the Killing metric, then by Remark \ref{UNIQUE} 
$$\textstyle \sigma(z)= \ln \frac{1}{({\rm Im} z)^2}+c{\rm Im} z+d,\qquad c,d\in\R.$$

\sn
(b) The Siegel upper half-plane of rank 2
$$\,\mathcal P=\{W=S+iT\in M(2,2,\C)~|~{}^tW =W,~T>0\}, $$
of   $2\times 2$ complex symmetric matrices with positive definite imaginary part, is the orbit of $\,iI_2\,$ under the action  by linear fractional transformations of the real symplectic group $Sp(2,\R)$.
Fix the  Iwasawa decomposition such that 
$$N=\left\{  \begin{pmatrix} {\bf n}&{\bf m}\\ {\bf 0}&{}^t{\bf n}^{-1}\end{pmatrix}\right\} ,\qquad A=\left\{  \begin{pmatrix} {\bf a}&{\bf 0}\\ {\bf 0}&{} {\bf a}^{-1}\end{pmatrix}\right\} , $$
where ${\bf n}$ is unipotent,  ${\bf n}\,{}^t{\bf m}$  is symmetric and  $\,{\bf a}=\begin{pmatrix} e^{a_1}&0\\ 0& e^{a_2}\end{pmatrix}$, with 
$a_1$, $a_1$  coordinates  in $\a$  with respect to the basis defined in Lemma \ref{BASIS2}.

As ${\bf b}=12$, the  potential  of the Killing metric defined in Proposition\,\ref{POTENTIALN}
 is given by $$\textstyle \rho(naK)= -6(a_1+a_2)\,\quad \hbox{and}\quad 
 \textstyle\widehat\rho(y_1,y_2)=\rho(\exp L(y_1,y_2)K)=\ln\frac{1}{(y_1y_2)^3}.$$
 A matrix $\,S+iT \in{\mathcal P}\,$ can be expressed in a unique way as 
$$\,na\cdot iI_2=n\cdot  \begin{pmatrix} ie^{2a_1}&0\\ 0&ie^{2a_2}\end{pmatrix}.$$
If  $\,T=\begin{pmatrix} t_1&t_3\\ t_3&t_2\end{pmatrix}$, a simple computation shows that 
 $\,e^{2a_1}=t_1-t_3^2/t_2\,$ and $\,e^{2a_2}=t_2$. 
Hence $y_1=t_1-t_3^2/t_2$, $y_2=t_2$ and $\,\rho(S+iT)=\ln\frac{1}{(t_1t_2-t_3^2)^3}.$ 

 If $\sigma$ is an arbitrary $N$-invariant potential of the Killing form, then by Remark\,\ref{UNIQUE}
 $$\textstyle\sigma(S+iT)=\ln\frac{1}{(t_1t_2-t_3^2)^3} +c t_2+d,\qquad {\rm for \ some \ }c,d\in\R.$$
\end{exa}


\bigskip

\end{document}